\def\ZZ{ \mathbb{Z} }
\def\StrCat{ \mathbf{StrCat} }
\def\Mod{ \mathrm{Mod} }
\def\uHom{ \underline{\mathrm{Hom}} }
\def\Set{ \mathrm{Set} }
\def\inc{ \hookrightarrow }
\def\ito{ \rightarrowtail }
\def\dto{ \rightrightarrows }
\def\lrto{ \leftrightarrows }
\def\weq{ \overset{\sim}{\longrightarrow} }
\def\eps{ \epsilon }
\def\sg{ \sigma }
\def\Ee{\mathcal{E}}
\def\uEe{\underline{\Ee}}
\def\op{\mathrm{op}}
\def\Ho{\mathbf{Ho}}
\def\sg{\sigma}
\def\Aa{\mathcal{A}}
\def\uAa{\underline{\Aa}}
\def\Bb{\mathcal{B}}
\def\uBb{\underline{\Bb}}
\def\Cc{\mathcal{C}}
\def\Ab{\mathrm{Ab}}
\def\Cyl{\mathrm{Cyl}}
\def\GS{\mathcal{GS}}
\def\uA{\underline{A}}
\def\Sp{\mathrm{Sp}}
\def\cleft{\hbox{[\kern-.16em\hbox{[}}}
\def\cright{\hbox{]\kern-.16em\hbox{]}}}
\theoremstyle{plain}
\newtheorem{defn}{Definition}[section]
\newtheorem{thm}[defn]{Theorem}
\newtheorem{lem}[defn]{Lemma}
\newtheorem{prop}[defn]{Proposition}
\newtheorem{cor}[defn]{Corollary}
\theoremstyle{remark}
\newtheorem{rem}[defn]{Remark}
\title{Gabriel-Morita theory for excisive model categories}
\author{Clemens Berger}
\address{Universit\'{e} de Nice, Lab. J. A. Dieudonn\'{e}, Parc Valrose, 06108 Nice Cedex, France}
\email{cberger@math.unice.fr}
\author{Kruna Ratkovic}
\address{University of Donja Gorica, Faculty of Applied Sciences, Donja Gorica, 81000 Podgorica, Montenegro}
\email{kruna.ratkovic@udg.edu.me}
\keywords{Homotopical algebra, Strong monad, Excision, Bar resolution}
\date{October 20, 2017}
\subjclass{Primary 18G55, 18C15; Secondary 18D25, 55P42}
\newcounter{diagram}[section]
\renewcommand\thediagram{\thesection.\arabic{diagram}}
\newenvironment{diagram}[1]{
\refstepcounter{diagram}
\def\titredudiagramme{#1}
\vspace{4mm}
}{
\begin{center}
  {\rm Diagram \thediagram. \emph{\titredudiagramme}}
\end{center}
}
\begin{document}

\begin{abstract}We develop a Gabriel-Morita theory for strong monads on pointed monoidal model categories. Assuming that the model category is excisive, i.e. the derived suspension functor is conservative, we show that if the monad $T$ preserves cofibre sequences up to homotopy and has a weakly invertible strength, then the category of $T$-algebras is Quillen equivalent to the category of $T(I)$-modules where $I$ is the monoidal unit. This recovers Schwede's theorem on connective stable homotopy over a pointed Lawvere theory as special case.\end{abstract}

\maketitle

\section*{Introduction}\label{Intro}

Morita \cite{MorDM} gave a precise criterion for when two rings have equivalent categories of modules. His criterion may be derived from Gabriel's characterisation \cite{GDCA} of those abelian categories which are equivalent to categories of $R$-modules for a unital associative ring $R$. In the present text we give a homotopical version of the latter in the framework of monoidal model categories, cf. Hovey \cite{HovMC} and Quillen \cite{QuiHA}.

We present a class of enriched monads $T$ acting on pointed monoidal model categories $\Ee$ with the property that the category $\Ee^T$ of $T$-algebras is Quillen equivalent to the category $\Mod_{T(I)}$ of $T(I)$-modules for a functorially associated monoid $T(I)$. The underlying object of this monoid is the free $T$-algebra on the monoidal unit $I$ of $\Ee$. The monoid structure is obtained through identification of $T(I)$ with the enriched endomorphism object $\uEe^T(T(I),T(I))$ in much the same way as any ring $R$ may be identified with the endomorphism ring of a free $R$-module of rank one.

It turns out that in our homotopical setting Kock's correspondence \cite{KockMS, KockSF} between strong and enriched monads is very helpful. In particular, the \emph{strength} of an enriched monad $T$ yields a direct formula for the monoid structure of $T(I)$ and moreover induces a natural transformation of monads $\lambda: -\otimes T(I)\to T$ relating the categories of $T(I)$-modules and of $T$-algebras by a canonical adjunction. In the special case of an enrichment in abelian groups this adjunction contains classical Gabriel-Morita theory in an embryonic form. Our homotopical version thereof essentially consists of finding suitable homotopical hypotheses on $\Ee$ and on $T$ in order to transform the adjunction into a Quillen equivalence of model categories.

We show in our main Theorem \ref{mainexcisive} that (apart from a few technical assumptions) the adjunction $\lambda_!:\Mod_{T(I)}\lrto\Ee^T:\lambda^*$ is a Quillen equivalence whenever the pointed model category $\Ee$ is excisive, the strong monad $T:\Ee\to\Ee$ preserves cofibre sequences up to homotopy, and the strength $\sg_{X,Y}:X\otimes T(Y)\to T(X\otimes Y)$ is a weak equivalence for all cofibrant objects $X,Y$ of $\Ee$.

By an \emph{excisive} model category we mean a left proper pointed model category in which the derived suspension functor is \emph{conservative}, i.e. reflects isomorphisms. This captures roughly speaking half of the well-known properties of a \emph{stable} model category. Yet, there are important examples of excisive model categories which are not stable, as for instance the category of based topological spaces localised with respect to a generalised homology theory, or the category of nonnegatively graded $R$-chain complexes, or Segal's category of $\Gamma$-spaces \cite{SECCT} equipped with the stable model structure of Bousfield-Friedlander \cite{BouHT}. It is surprising how much homological algebra can be deduced just from a conservative derived suspension functor. In contrast to the first two examples, the derived suspension functor of the third example is even \emph{fully faithful} in which case we call the model category \emph{stably excisive}.

Homotopy pushouts in an excisive model category $\Ee$ are characterised by having weakly equivalent parallel homotopy cofibres. In particular, a monad $T$ on $\Ee$ which preserves cofibre sequences up to homotopy actually preserves all homotopy pushouts in $\Ee$. This will be enough to show that the forgetful functor takes free cell attachments of $T$-algebras to homotopy pushouts in $\Ee$, a key property in our homotopical Gabriel-Morita theory. On the way we make essential use of the so-called \emph{bar resolution} of a $T$-algebra which under suitable conditions yields a \emph{cofibrant replacement}. Segal's \emph{fat realisation} \cite{SECCT} is a most useful device in this context.

In the special case where $\Ee$ is the stably excisive model category of $\Gamma$-spaces, and $T$ is the monad on $\Gamma$-spaces induced by a pointed simplicial Lawvere theory \cite{Lawvere}, we recover Schwede's theorem \cite{SchSHAT} on connective stable homotopy over a pointed Lawvere theory. To understand the latter from the perspective of homotopical Gabriel-Morita theory has been one of main motivations of the present text.

The article is organised as follows:\vspace{1ex}

Section \ref{CEbSCEC} reviews Kock's correspondence between strong and enriched monads using Street's formal theory of monads \cite{KockMS,KockSF,StrFTM}. We give a ``monadic'' proof of Gabriel's characterisation  \cite{GDCA} of module categories among abelian categories. 

Section \ref{HMT} introduces excisive model categories with emphasis on the homotopical diagram lemmas available in such categories. We also discuss the relationship with Goodwillie's notion of excisive identity functor \cite{Good}.

Section \ref{Mth} presents our homotopical Gabriel-Morita theory giving sufficient conditions for the adjunction $\lambda_!:\Mod_{T(I)}\lrto\Ee^T:\lambda^*$ to be a Quillen equivalence. As an application we give a quick proof of Schwede's theorem on connective stable homotopy theory for models of a pointed simplicial Lawvere theory \cite{Lawvere,SchSHAT}.

Section \ref{taubar} studies the bar resolution of $T$-algebras, used in Section \ref{Mth} in order to show that certain homotopical properties of a monad $T$ are inherited by the forgetful functor $\Ee^T\to\Ee$. Our main tool is a new cofibrancy notion for simplicial objets of a model category, interpolationg between degreewise and Reedy cofibrancy, cf. \cite{SECCT}.\vspace{1ex}

{\bf Terminology.} We assume the reader is familiar with enriched category theory and homotopical algebra and follow the terminology of Borceux \cite{BorHCA} for the former and the terminology of Quillen \cite{QuiHA}, Hovey \cite{HovMC} and Hirschhorn \cite{Hi} for the latter.

Cofibrations will be denoted by $X\ito Y$, weak equivalences by $X\weq Y$. An \emph{acyclic (co)fibration} is a morphism which is simultaneously a (co)fibration and a weak equivalence. An object of a pointed model category is called \emph{acyclic} if it is weakly equivalent to a null-object. An object of a model category is called \emph{cofibrant} (resp. \emph{fibrant}) if the unique map from an initial object (resp. to a terminal object) is a cofibration (resp. fibration).

\section{Strong vs enriched monads}\label{CEbSCEC}

In this section, we recall Kock's correspondence \cite{KockMS,KockSF} between \emph{strong} and \emph{enriched} monads on a closed symmetric monoidal category $(\Ee,\otimes, I)$. We deduce this correspondence from Street's formal theory of monads \cite{StrFTM} showing that tensored $\Ee$-categories are the $0$-cells of two different but $2$-isomorphic $2$-categories, one with strong functors and strong natural transformations as $1$- and $2$-cells, the other with $\Ee$-functors and $\Ee$-natural transformations as $1$- and $2$-cells.

Our main interest comes from the resulting equivalence between the category of \emph{monoids} in $\Ee$ and the category of \emph{linear monads} on $\Ee$, i.e. those strong monads which have invertible strength. Each strong monad $T$ comes equipped with a linear approximation $\lambda:-\otimes T(I)\to T$ which is invertible precisely when $T$ is linear.

\subsection{Strong functors and strong natural transformations}-\label{Str}\label{deffcomp}\label{defstnatt}\vspace{1ex}

We fix once and for all a closed symmetric monoidal category $\Ee=(\Ee,\otimes,I)$. The internal hom of $\Ee$ will be denoted $\uEe(-,-)$ while the $\Ee$-valued hom of an $\Ee$-category $\Aa$ will be denoted $\uAa(-,-)$. Recall that an $\Ee$-category $\Aa$ is called \emph{tensored} (cf. Quillen \cite{QuiHA}) if for each object $A$ in $\Aa$, the $\Ee$-functor $\underline{\Aa}(A,-):\Aa\rightarrow\Ee$ admits an $\Ee$-enriched left adjoint. These left adjoints assemble into a \emph{left $\Ee$-action} on $\Aa$ which we shall denote as an \emph{external tensor product} $-\otimes_\Ee-:\Ee\times\Aa\rightarrow\Aa$. With this notation the individual adjunctions give rise to trinatural $\Ee$-isomorphisms$$\underline{\Aa}(X\otimes_\Ee A,B)\cong\underline{\Ee}\left(X,\underline{\Aa}(A,B)\right)$$for any objects $A,B$ in $\Aa$ and $X$ in $\Ee$. In particular, we get canonical isomorphisms $l_A:I\otimes_\Ee A\cong A$ and  $a_{X,Y,A}:(X\otimes Y)\otimes_\Ee A\cong X\otimes_\Ee(Y\otimes_\Ee A)$ for any objects $X,Y$ in $\Ee$ and $A$ in $\Aa$.

A symmetric monoidal category $(\Ee,\otimes,I)$ is \emph{closed} if and only if $\Ee$ is a tensored $\Ee$-category such that the left $\Ee$-action agrees with the monoidal structure of $\Ee$.

A \emph{strong functor} $(F,\sg)$ between tensored $\Ee$-categories is a functor $F:\Aa\to\Bb$ endowed with a \emph{strength}
$\sg_{X,A}: X\otimes_\Ee F(A) \rightarrow F(X\otimes_\Ee A)$ natural in $X$ and $A$ and such that the following two diagrams in $\Bb$ commute:\begin{diagram}{Unit constraint for the strength}
$$
\xymatrix @!0 @C=3cm @R=1.5cm{\relax
 I\otimes_\Ee F(A)\ar[rr]^{\sg_{I,A}} \ar[rd]_{l_{F(A)}} && F(I\otimes_\Ee A) \ar[ld]^{F(l_A)} \\
& F(A)
}
$$
\end{diagram}
\begin{diagram}{Associativity constraint for the strength}
$$
\xymatrix @!0 @C=5cm @R=2.5cm{\relax
 (X\otimes Y)\otimes_\Ee F(A) \ar[r]^{a_{X,Y,F(A)}} \ar[d]_{\sg_{X\otimes Y,A}} & X\otimes_\Ee(Y\otimes_\Ee F(A)) \ar[r]^{X\otimes_\Ee\sg_{Y,A}}  & X\otimes_\Ee F(Y\otimes_\Ee A) \ar[d]^{\sg_{X,Y\otimes_\Ee A}} \\
F((X\otimes Y)\otimes_\Ee A)\ar[rr]^{F(a_{X,Y,A})} && F(X\otimes_\Ee (Y\otimes_\Ee A))
}
$$
\end{diagram}
Strong functors $(F_1,\sg_1):\Aa\to\Bb$ and $(F_2,\sg_2):\Bb\to\Cc$ compose so as to give a strong functor $(F_2F_1,\sg):\Aa\to\Cc$ with strength
$$
\xymatrix @!0 @C=3cm @R=1.5cm{\relax
 X\otimes_\Ee F_2(F_1(A)) \ar[rr]^{\sg} \ar[rd]_{\sg_2} && F_2(F_1(X\otimes_\Ee A))  \\
&  F_2(X\otimes_\Ee F_1(A)) \ar[ru]_{{F_2}(\sg_1)}
}
$$

A \emph{strong natural transformation} $\psi$ between strong functors $(F,\sg^F),(G,\sg^G):\Aa\rightarrow \Bb$ is a natural transformation $\psi:F\Rightarrow G$ such that the following diagram
$$
\xymatrix @C=3cm @R=2.5cm{\relax
X\otimes_\Ee F(A) \ar[r]^{\sg^F_{X,A}} \ar[d]_{X\otimes_\Ee \psi_A} & F(X\otimes_\Ee A) \ar[d]^{\psi_{X\otimes_\Ee A}}\\
X\otimes_\Ee G(A) \ar[r]_{\sg^G_{X,A}} & G(X\otimes_\Ee A)
}
$$commutes for all objects $X$ in $\Ee$ and all objects $A$ in $\Aa$.

An adjunction $F:\Aa\lrto\Bb:G$ between tensored $\Ee$-categories $\Aa,\Bb$ is called \emph{strong} if $F$ and $G$ are strong functors, and unit $\eta:id_\Aa\Rightarrow GF$ and counit $\eps:FG\Rightarrow id_\Bb$ are strong natural transformations.

\begin{lem}Tensored $\Ee$-enriched categories, strong functors and strong natural transformations constitute a $2$-category, denoted $\StrCat_\Ee$.
\end{lem}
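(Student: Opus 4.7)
The plan is to lift the 2-category structure from $\mathrm{Cat}$, verifying only that the strength data is preserved under every composition operation. Once this is done, the 2-category axioms (associativity and units for 1-cell composition, interchange, etc.) reduce to the corresponding axioms for underlying functors and natural transformations, which hold in $\mathrm{Cat}$.

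First I would check that the composite strength $\sg = F_2(\sg_1)\circ(\sg_2)_{X,F_1(A)}$ defined in the excerpt satisfies the two constraints. The unit constraint follows by pasting the unit constraint for $\sg_2$ (instantiated at $F_1(A)$) with $F_2$ applied to the unit constraint for $\sg_1$. The associativity constraint requires a hexagonal pasting diagram: one applies $F_2$ to the associativity constraint for $\sg_1$, pastes with the associativity constraint for $\sg_2$ at $F_1(A)$, and bridges the two halves by naturality of $\sg_2$ in its second variable with respect to the morphism $\sg_1$. The identity functor equipped with the identity strength trivially satisfies both constraints and serves as a strict unit, while associativity of composition of strong functors is manifest from the definition.

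Next I would verify that vertical and horizontal composition of strong natural transformations preserves strongness. Vertical composition is immediate: stacking the compatibility squares for $\psi:F\Rightarrow G$ and $\phi:G\Rightarrow H$ yields the compatibility square for $\phi\psi$. For the outer whiskering $(H,\sg^H)\cdot\psi$, the compatibility square with respect to the composite strengths $\sg^{HF}$ and $\sg^{HG}$ follows by applying $H$ to the compatibility square of $\psi$ and pasting with naturality of $\sg^H$ in its second variable at $\psi_A$; the inner whiskering $\psi\cdot (H,\sg^H)$ is verified symmetrically, using naturality of $\psi$ against $\sg^H_{X,C}$ together with the compatibility square of $\psi$ at $H(C)$. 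Horizontal composition is then obtained as an outer whiskering followed by vertical composition with an inner whiskering, and identity natural transformations are visibly strong.

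With these closure properties in hand, every 2-category axiom reduces to the corresponding statement in $\mathrm{Cat}$, since the constructions above build the strong structure strictly on top of the underlying $\mathrm{Cat}$-structure. The main obstacle, though of purely diagrammatic nature, lies in the hexagonal pasting establishing the associativity constraint for the composite strength; the remaining verifications are short diagram chases combining naturality with the axioms for the individual strengths.
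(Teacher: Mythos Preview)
Your proposal is correct and is precisely the routine verification the paper has in mind: the paper's own proof consists of the single word ``Straightforward.'' Your outline of the diagram chases (composite strength via pasting, closure of strongness under vertical and whiskered composition, reduction of the $2$-category axioms to $\mathrm{Cat}$) is the standard way to unpack that word.
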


\begin{proof}Straightforward.\end{proof}

\begin{defn}[cf. Kock \cite{KockMS,KockSF}, Street \cite{StrFTM}, Moggi \cite{MogNCM}]--\label{defStMon}

A \emph{strong monad} $(T,\mu,\eta,\sg)$ on a closed symmetric monoidal category $(\Ee,\otimes,I)$ is a monad in the $2$-category $\StrCat_\Ee$, i.e. it consists of a \emph{monad} $\left(T,\mu,\eta\right)$ on $\Ee$ endowed with a binatural \emph{strength} $\sg_{X,Y}: X\otimes T(Y)\rightarrow T(X\otimes Y)$ such that the following four diagrams commute for all objects $X,Y,Z$ in $\Ee$:
\begin{diagram}{Unit constraint for the strength}
$$
\xymatrix @!0 @C=3cm @R=1.5cm{\relax
 I\otimes T(X)\ar[rr]^{\sg_{I,X}} \ar[rd]_{l_{T(X)}} && T(I\otimes X) \ar[ld]^{T(l_X)} \\
& T(X)
}
$$
\end{diagram}
\begin{diagram}{Associativity constraint for the strength}\label{diagassbeta}
$$
\xymatrix @!0 @C=5cm @R=2.5cm{\relax
 (X\otimes Y)\otimes T(Z) \ar[r]^{a_{X,Y,T(Z)}} \ar[d]_{\sg_{X\otimes Y,Z}} & X\otimes(Y\otimes T(Z)) \ar[r]^{X\otimes\sg_{Y,Z}}  & X\otimes T(Y\otimes Z) \ar[d]^{\sg_{X,Y\otimes Z}} \\
T((X\otimes Y)\otimes Z)\ar[rr]^{T(a_{X,Y,Z})} && T(X\otimes (Y\otimes Z))
}
$$
\end{diagram}
\begin{diagram}{Strong naturality of the unit}\label{diagsteta}
$$
\xymatrix @!0 @C=3cm @R=1.5cm{\relax
 X\otimes T(Y) \ar[rr]^{\sg_{X,Y}} && T(X\otimes Y)  \\
& X\otimes Y \ar[lu]^{X\otimes \eta_{Y}} \ar[ru]_{\eta_{X\otimes Y}}
}
$$
\end{diagram}\label{diagstmu}
\begin{diagram}{Strong naturality of the multiplication}
$$
\xymatrix @!0 @C=4.7cm @R=2.3cm{\relax
X\otimes T^{2}(Y) \ar[r]^{\sg_{X,T(Y)}} \ar[d]_{X\otimes \mu_{Y}} & T(X\otimes T(Y)) \ar[r]^{T(\sg_{X,Y})} & T^{2}(X\otimes Y) \ar[d]^{\mu_{X\otimes Y}}\\
X\otimes T(Y) \ar[rr]^{\sg_{X,Y}} && T(X\otimes Y)
}
$$
\end{diagram}
\end{defn}

\begin{rem}The reader may observe that in this unravelled definition of a strong monad the \emph{closed} structure of $\Ee$ has disappeared. It was one of the insights of Kock \cite{KockMS,KockSF} that a strong monad $T$ is definable independently of any existing closed structure, yet representing a monad which \emph{implicitly} takes care of any self-enrichment of $\Ee$ thanks to the adjoint relationship between tensor and hom.

This point of view will be even more important when dealing with strong monads acting on monoidal model categories $\Ee$ since it turns out that the interference between the strength of $T$ and the \emph{cofibration structure} of $\Ee$ is more transparent than the interference between the enrichment of $T$ and the \emph{fibration structure} of $\Ee$.\end{rem}

\begin{prop}[cf. Kock \cite{KockSF}, Theorem 1.3]\label{th2-cat}--

For any closed symmetric monoidal category $\Ee$, the $2$-category of tensored $\Ee$-categories, strong functors and strong natural transformations is $2$-isomorphic to the $2$-category of tensored $\Ee$-categories, $\Ee$-functors and $\Ee$-natural transformations.
\end{prop}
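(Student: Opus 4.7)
The plan is to construct explicit inverse 2-functors between the two 2-categories that are the identity on 0-cells, so the content is packaged entirely in the correspondences between strong and $\Ee$-enriched structures on 1-cells and 2-cells. On the 1-cell level, given a strong functor $(F,\sigma):\Aa\to\Bb$, I would build the $\Ee$-enrichment $F_{A,B}:\uAa(A,B)\to\uBb(F(A),F(B))$ as the adjoint transpose of the composite
$$\uAa(A,B)\otimes_\Ee F(A)\xrightarrow{\sigma_{\uAa(A,B),A}} F\bigl(\uAa(A,B)\otimes_\Ee A\bigr)\xrightarrow{F(\mathrm{ev}_{A,B})} F(B),$$
where $\mathrm{ev}_{A,B}:\uAa(A,B)\otimes_\Ee A\to B$ is the counit of the external tensor--hom adjunction. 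Conversely, given an $\Ee$-functor $F$ I would recover a strength by applying the enriched functoriality of $F$ to the unit $\mathrm{coev}:X\to\uAa(A,X\otimes_\Ee A)$, producing $X\to\uBb(F(A),F(X\otimes_\Ee A))$, then transposing back across the adjunction to obtain $\sigma_{X,A}:X\otimes_\Ee F(A)\to F(X\otimes_\Ee A)$.

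The next step is to verify that these two assignments are mutually inverse and that each takes the axioms of one kind of structure to the axioms of the other. The unit constraint for the strength matches the unit law $F_{A,A}\circ j_A=j_{F(A)}$ of an $\Ee$-functor by a direct triangle-identity argument. The associativity constraint is the delicate part: one must show that the hexagon in Diagram of the associativity constraint for the strength becomes, after transposition, the square expressing compatibility of $F_{-,-}$ with the $\Ee$-enriched composition $\uAa(B,C)\otimes\uAa(A,B)\to\uAa(A,C)$. This uses precisely that enriched composition is itself constructed from the tensor--hom adjunction and the associator $a_{X,Y,-}$. I would set up this translation once and for all in terms of the trinatural isomorphism $\uBb(X\otimes_\Ee B,C)\cong\uEe(X,\uBb(B,C))$, so that the two axioms become manifestly adjoint-equivalent formulations of the same coherence.

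For 2-cells I would transport a strong natural transformation $\psi:(F,\sigma^F)\Rightarrow(G,\sigma^G)$ to its $\Ee$-natural counterpart by noting that its component $\psi_A:F(A)\to G(A)$ corresponds under the tensor--hom adjunction to a map $I\to\uBb(F(A),G(A))$, and then showing that the commuting square in the definition of a strong natural transformation is, after transposition, precisely the $\Ee$-naturality square $\uAa(A,B)\to\uBb(F(A),F(B))\to\uBb(F(A),G(B))$ versus $\uAa(A,B)\to\uBb(G(A),G(B))\to\uBb(F(A),G(B))$. This correspondence is automatically inverse to the one constructed on 1-cells because both constructions are obtained by transposition along the same adjunction.

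Finally I would check that the assignment respects vertical and horizontal composition of 2-cells as well as composition of 1-cells; the composite strong functor given in Section~\ref{deffcomp} is compatible with the usual composition of $\Ee$-functors because both are defined by pasting the relevant naturality/strength squares, and the check reduces to a straightforward diagram chase once one has fixed the translation above. The main obstacle throughout is bookkeeping rather than any conceptual difficulty: the associativity hexagon for $\sigma$ versus the composition axiom for $F_{-,-}$ is the step where one must be most careful to keep track of how the associator of $\Ee$ intertwines with the external action on $\Aa$ and $\Bb$, and this is where Street's formal calculus is most useful in organising the diagrams.
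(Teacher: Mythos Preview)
Your proposal is correct and follows essentially the same approach as the paper: both construct the bijection on $1$-cells via the same pair of adjoint transposes (strength $\leadsto$ enrichment using $\sigma$ then $F(\mathrm{ev})$; enrichment $\leadsto$ strength using $\mathrm{coev}$ then $\varphi$), invoke the triangle identities to see these are mutually inverse, and then check the $2$-cell correspondence by transposing the strong-naturality square into the $\Ee$-naturality square. If anything you are more thorough than the paper, which explicitly leaves the compatibility with the $2$-categorical structure (composition of $1$-cells, vertical/horizontal composition of $2$-cells, and the translation of the associativity hexagon into the composition axiom) to the reader.
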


\begin{proof}The $0$-cells of the two $2$-categories are the same. We shall indicate canonical bijections between the respective $1$- and $2$-cells and leave it to the reader to check compatibility with the $2$-categorical structures on both sides (see also \cite{RatMTEC}).

Note first that there is an evaluation map$$ev_{A_1,A_2}:\uAa(A_1,A_2)\otimes_\Ee A_1\to A_2$$adjoint to the identity of $\uAa(A_1,A_2)$, as well as a coevaluation map$$coev_{X,A}:X\to\uAa(A,X\otimes_\Ee A)$$adjoint to the identity of $X\otimes_\Ee A$.

For a given functor $F:\Aa\to\Bb$, any strength $\sg_{X,A}:X\otimes_\Ee F(A)\to F(X\otimes_\Ee A)$ determines an enrichment $\varphi_{A_1,A_2}$ by taking the adjoint of the composite map$$\uAa(A_1,A_2)\otimes_\Ee F(A_1)\overset{\sg_{\uAa(A_1,A_2),A_1}}{\longrightarrow}F(\uAa(A_1,A_2)\otimes_\Ee A_1)\overset{F(ev_{A_1,A_2})}{\longrightarrow}F(A_2).$$

Conversely, any enrichment $\varphi_{A_1,A_2}:\uAa(A_1,A_2)\longrightarrow\uBb(F(A_1),F(A_2))$ determines a strength $\sg_{X,A}$ by taking the adjoint of the composite map$$X\overset{coev_{X,A}}{\longrightarrow}\uAa(A,X\otimes_\Ee A)\overset{\varphi_{A,X\otimes_\Ee A}}{\longrightarrow}\uBb(F(A),F(X\otimes_\Ee A)).$$ The two assignments are mutually inverse, because evaluation and coevaluation satisfy the familiar triangle relations of counit and unit of an adjunction.

Let $\psi:F\Rightarrow G$ be a natural transformation between functors $F,G:\Aa\to\Bb$ which are endowed with strengths $\sg^F,\sg^G$, resp. their corresponding enrichments $\varphi^F,\varphi^G$. Assume that $\psi$ is strong natural. Then the following diagram
$$
\xymatrix @!0 @C=4.5cm @R=2cm{\relax
 \uAa(A_1,A_2)\otimes_\Ee F(A_1) \ar[r]^{\sg^F} \ar[d]_{\uAa(A_1,A_2)}^{\otimes_\Ee\psi_{A_1}} &  F(\uAa(A_1,A_2)\otimes_\Ee A_1) \ar[r]^{F(ev_{A_1,A_2})} \ar[d]_{\psi_{\uAa(A_1,A_2)\otimes_\Ee A_1}} & \ar[d]_{\psi_{A_2}} F(A_2)\\
 \uAa(A_1,A_2)\otimes_\Ee G(A_1) \ar[r]_{\sg^G} &  G(\uAa(A_1,A_2)\otimes_\Ee A_1) \ar[r]_{G(ev_{A_1,A_2})} & G(A_2)
}
$$commutes, and a suitable adjoint of the outer rectangle shows that $\psi$ is $\Ee$-natural. Conversely, assume that $\psi$ is $\Ee$-natural. Then the following diagram
$$
\xymatrix @!0 @C=4.5cm @R=3cm{\relax
X\ar[r]^{coev_{X,A}}  \ar[rd]_{\hat{\sg}^F}  \ar@/^4pc/[rr]^{\hat{\sg}^G} & \uAa(A,X\otimes_\Ee A)\ar[d]^{\varphi^F} \ar@{}[rd] \ar[r]^{\varphi^G} & \uBb(G(A),G(X\otimes_\Ee A))\ar[d]^{\psi^*}\\
& \uBb(F(A),F(X\otimes_\Ee A)) \ar[r]_{\psi_*} & \uBb(F(A),G(X\otimes_\Ee A))
}
$$commutes which establishes by adjunction that $\psi$ is strong natural.\end{proof}

\begin{cor}\label{cormons-e}\label{proptens}For any closed symmetric monoidal category $\Ee$, there is a canonical one-to-one correspondence between strong monads and $\Ee$-enriched monads on $\Ee$.

If $\Ee$ has equalisers, then the category $\Ee^T$ of $T$-algebras for a strong monad $T$ is $\Ee$-enriched. If in addition $\Ee^T$ has reflexive coequalisers, then $\Ee^T$ is a tensored $\Ee$-category and the free-forgetful adjunction $F_T:\Ee\lrto\Ee^T:U_T$ is a strong adjunction.\end{cor}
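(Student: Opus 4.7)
For the first claim, I would simply invoke Proposition~\ref{th2-cat} with $\Aa=\Ee$: the closed symmetric monoidal category $\Ee$ is a tensored $\Ee$-category with external action equal to its monoidal structure, so the 2-isomorphism of Proposition~\ref{th2-cat} identifies the endo-hom-categories $\StrCat_\Ee(\Ee,\Ee)$ and $\Ee\text{-}\mathbf{Cat}(\Ee,\Ee)$. Since a monad in any 2-category $\mathcal K$ on a 0-cell $X$ is nothing but a monoid in the monoidal endo-hom-category $\mathcal K(X,X)$, a 2-isomorphism transports monads to monads. Hence strong monads on $\Ee$ correspond bijectively to $\Ee$-enriched monads on $\Ee$; strength and enrichment are interchanged by the explicit adjoint formulas written in the proof of Proposition~\ref{th2-cat}.

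For the enrichment of $\Ee^T$, given $T$-algebras $(A,a)$ and $(B,b)$, I would define $\uEe^T((A,a),(B,b))$ to be the equaliser in $\Ee$ of the pair
$$\uEe(A,B)\;\dto\;\uEe(TA,B)$$
where one map is precomposition $a^{*}$ with $a:TA\to A$, and the other is the composite $\uEe(A,B)\overset{\varphi_{A,B}}{\lto}\uEe(TA,TB)\overset{b_{*}}{\lto}\uEe(TA,B)$ using the enrichment $\varphi$ of $T$. This equaliser exists precisely because $\Ee$ has equalisers, and its underlying set classifies $T$-algebra morphisms. Identities and the composition of $\Ee$ factor through these equalisers by a routine diagram chase using the monad axioms and the functoriality of $\varphi$; the forgetful $\Ee$-functor $U_T:\Ee^T\to\Ee$ is then tautologically $\Ee$-enriched, hence strong.

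For the tensored structure, I would use the standard monadic construction via reflexive coequalisers. For $X\in\Ee$ and $(A,a)\in\Ee^T$, define $X\otimes_\Ee(A,a)$ as the reflexive coequaliser in $\Ee^T$ of the pair
$$F_T(X\otimes TA)\;\dto\;F_T(X\otimes A),$$
whose legs are $F_T(X\otimes a)$ and the composite $F_T(X\otimes TA)\to F_T(T(X\otimes A))\overset{\mu}{\lto}F_T(X\otimes A)$ induced by the strength $\sg_{X,A}:X\otimes TA\to T(X\otimes A)$, with common section $F_T(X\otimes\eta_A)$. Existence is guaranteed by the hypothesis of reflexive coequalisers in $\Ee^T$. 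The trinatural isomorphism
$$\uEe^T\bigl(X\otimes_\Ee(A,a),(B,b)\bigr)\;\cong\;\uEe\bigl(X,\uEe^T((A,a),(B,b))\bigr)$$
then follows from the universal property of the coequaliser combined with the equaliser presentation of $\uEe^T$ and the defining adjunction of $F_T\dashv U_T$. This simultaneously equips $\Ee^T$ with a tensoring and proves $F_T$ is $\Ee$-enriched.

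Finally, to show that the free-forgetful adjunction is strong, I would observe that $U_T$ is enriched by construction above, hence strong via Proposition~\ref{th2-cat}. Its left adjoint $F_T$ acquires a canonical enrichment as a left adjoint of an $\Ee$-functor between tensored $\Ee$-categories (with strength $X\otimes_\Ee F_T(Y)\to F_T(X\otimes Y)$ supplied by the tensor/coequaliser construction, which is designed precisely so that $F_T(X\otimes Y)$ receives the universal map from $X\otimes_\Ee F_T(Y)$). The unit $\eta:\mathrm{id}_\Ee\Rightarrow U_TF_T$ coincides with the unit of the monad, which is strong natural by Diagram~\ref{diagsteta}; the counit $\eps:F_TU_T\Rightarrow\mathrm{id}_{\Ee^T}$ is strong natural because it is the algebra action, whose compatibility with the tensor on $\Ee^T$ is built into the defining coequaliser. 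The main technical care lies in the coherent verification of these last diagrams, but each reduces by the universal property of the coequaliser to Diagrams \ref{diagassbeta}--\ref{diagstmu} for $T$.
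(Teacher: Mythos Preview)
Your proposal is correct and follows essentially the same route as the paper: the first assertion via Proposition~\ref{th2-cat} and the observation that monads are monoids in endo-hom-categories (the paper phrases this as an appeal to Street's formal theory of monads), the enriched hom of $T$-algebras as the equaliser of $\uEe(A,B)\dto\uEe(TA,B)$, and the $\Ee$-tensor $X\otimes_\Ee(A,a)$ as the reflexive coequaliser of $F_T(X\otimes TA)\dto F_T(X\otimes A)$ with the two legs $T(X\otimes a)$ and $\mu_{X\otimes A}\circ T(\sg_{X,A})$ and common section $T(X\otimes\eta_A)$---this is identical to the paper's construction. The only cosmetic difference is that the paper dispatches the strongness of the free-forgetful adjunction by a second invocation of Proposition~\ref{th2-cat} and \cite{StrFTM}, whereas you verify directly that unit and counit are strong natural via Diagrams~\ref{diagsteta}--\ref{diagstmu}; both are acceptable.
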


\begin{proof}The first assertion follows from Proposition \ref{th2-cat} and Street's formal theory of monads \cite{StrFTM}. The second assertion is classic for $\Ee$-enriched monads $T$ and produces an $\Ee$-enriched adjunction $F_T:\Ee\lrto\Ee^T:U_T$. For the third assertion, it suffices (again by Proposition \ref{th2-cat} and \cite{StrFTM}) to establish the existence of $\Ee$-tensors in $\Ee^T$.

For a $T$-algebra $(X,\xi_X:T(X)\to X)$ in $\Ee^T$ and an object $Z$ in $\Ee$ we define the $\Ee$-tensor $Z\otimes_\Ee (X,\xi_X)$ in $\Ee^T$ by the following reflexive coequaliser

$$
\xymatrix @!0 @C=3cm @R=1.8cm{\relax
T(Z\otimes T(X)) \ar[rr]^{T(Z\otimes \xi_{X}) } \ar[rd]_{T(\sg_{Z,X})} && T(Z\otimes X) \ar[r] & Z\otimes_\Ee(X,\xi_X)\\
& T^2(Z\otimes X) \ar[ru]_{\mu_{Z\otimes X}}
}
$$ in $\Ee^T$. Note that the $T$-algebra maps $\mu_{Z\otimes X}\circ T(\sg_{Z,X})$ and $T(Z\otimes\xi_X)$ admit $T(Z\otimes\eta_X)$ as common section because of the strong naturality of the unit $\eta$, cf. Diagram \ref{diagsteta}. For $T$-algebras $(X,\xi_X),(Y,\xi_Y)$ and any object $Z$ in $\Ee$ we get the following chain of trinatural $\Ee$-isomorphisms
\begin{align*}\uEe^T(Z\otimes_\Ee X,Y)&\cong\uEe^T(Coeq\lbrace T(Z\otimes T(X))\rightrightarrows T(Z\otimes X)\rbrace,Y)\\
&\cong Eq\lbrace\uEe^T(T(Z\otimes X),Y) \rightrightarrows\uEe^T(T(Z\otimes T(X)),Y)\rbrace \\
&\cong Eq\lbrace \uEe(Z\otimes X,Y) \rightrightarrows \uEe(Z\otimes T(X),Y)\rbrace\\
&\cong\uEe(Z,Eq\lbrace \uEe(X,Y)\rightrightarrows \uEe(T(X),Y)\rbrace)\\
&\cong\uEe(Z,\uEe^T(X,Y))
\end{align*}
where we have used on one side that the free-forgetful adjunction $(F_T,U_T)$ is an $\Ee$-adjunction, and on the other side that the pair $(\mu_{Z\otimes X}\circ T(\sg_{Z,X}),T(Z\otimes\xi_X))$ transforms into the pair $((\uEe(T(X),\xi_Y)\circ\varphi_{X,Y}^T,\uEe(\xi_X,Y)):\uEe(X,Y)\rightrightarrows\uEe(T(X),Y)$ the equaliser of which computes the $\Ee$-enriched hom $\uEe^T(X,Y)$.\end{proof}

\begin{rem}\label{forgetfulstrength}
It follows from an adjunction argument that the strength of the free functor $Z_1\otimes_\Ee F_T(Z_2)\to F_T(Z_1\otimes Z_2)$ is invertible for all objects $Z_1,Z_2$ of $\Ee$. However, the strength of the forgetful functor $Z\otimes U_T(X,\xi_X)\to U_T(Z\otimes_\Ee(X,\xi_X))$ is in general not invertible and is explicitly induced by the following diagram in $\Ee$
$$
\xymatrix @!0 @C=3cm @R=2cm{\relax
Z\otimes T^2(X) \ar@<2pt>[rr]^{Z\otimes \mu_{X}} \ar@<-2pt>[rr]_{Z\otimes T(\xi_{X})} \ar[d]_{\sg_{Z,T(X)}} && Z\otimes T(X) \ar[r]^{Z\otimes\xi_X} \ar[d]^{\sg_{Z,X}}&Z\otimes U_T(X,\xi_X)\ar@{.>}[d] \\
T(Z\otimes T(X)) \ar[rr]^{T(Z\otimes \xi_X) } \ar[rd]_{T(\sg_{Z,X})} && T(Z\otimes X) \ar[r] & U_T(Z\otimes_\Ee(X,\xi_X)) \\
& T^2(Z\otimes X) \ar[ru]_{\mu_{Z\otimes X}}
}
$$
where the internal left square commutes by naturality of the strength $\sg$ and the external pentagonal diagram on the left commutes by strong naturality of $\mu$.\end{rem}

\subsection{Linear monads and monoids}

A strong monad $(T,\mu,\eta,\sg)$ will be called \emph{linear} if its strength $\sg_{X,Y}: X\otimes T(Y)\to T(X\otimes Y)$ is \emph{invertible} for all $X,Y$.

Any monoid $M$ in a symmetric monoidal category $(\Ee,\otimes,I)$ induces a linear monad $T_M(X)=X\otimes M$ on $\Ee$ where the strength is induced by the associativity constraint of the monoidal structure of $\Ee$. We will see that any linear monad on $\Ee$ is of the form $T_M$ for an essentially unique monoid $M=T(I)$ in $\Ee$. In other words, the category of linear monads on $\Ee$ is equivalent to the category of monoids in $\Ee$.

\begin{lem}\label{propmonTI}
Let $(T,\mu,\eta,\sg)$ be a strong monad on a closed symmetric monoidal category $(\Ee,\otimes,I)$. The object $T(I)$ of $\Ee$ is isomorphic to the $\Ee$-enriched endomorphism monoid of the free $T$-algebra $F_T(I)$ generated by the monoidal unit $I$.

The induced monoid structure on $T(I)$ is given by$$m_{T(I)}:T(I)\otimes T(I) \xrightarrow{\sg_{T(I),I}} T(T(I)\otimes I) \xrightarrow{T(r_{T(I)})} T(T(I)) \xrightarrow{\mu_I} T(I)$$with unit$$e_{T(I)}=\eta_I:I\to T(I).$$
\end{lem}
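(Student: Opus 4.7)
The plan is to invoke Corollary \ref{cormons-e}, which provides an $\Ee$-enriched free--forgetful adjunction $F_T : \Ee \lrto \Ee^T : U_T$. The enrichment yields a chain of natural $\Ee$-isomorphisms
$$\uEe^T(F_T(I), F_T(I)) \cong \uEe(I, U_T F_T(I)) = \uEe(I, T(I)) \cong T(I),$$
identifying the underlying object of the endomorphism monoid of $F_T(I)$ in the $\Ee$-category $\Ee^T$ with $T(I)$. The monoid structure inherited from composition in $\Ee^T$ is then transported along this isomorphism to produce a monoid structure on $T(I)$, which must be shown to coincide with the stated one.

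To make the transport explicit, I would work with the elementary description of the adjunction bijection: a morphism $a : I \to T(I)$ corresponds to the free extension $\bar{a} = \mu_I \circ T(a) : F_T(I) \to F_T(I)$ in $\Ee^T$, with inverse $\psi \mapsto U_T(\psi) \circ \eta_I$. The unit of the endomorphism monoid is $\mathrm{id}_{F_T(I)}$, which corresponds to $\eta_I$, giving $e_{T(I)} = \eta_I$. For the multiplication, if $a, b : I \to T(I)$ correspond to $\bar{a}, \bar{b}$, then a short computation using naturality of $\eta$ (so that $T(b) \circ \eta_I = \eta_{T(I)} \circ b$) and the unit law $\mu_I \circ \eta_{T(I)} = \mathrm{id}$ yields
$$U_T(\bar{a} \circ \bar{b}) \circ \eta_I = \mu_I \circ T(a) \circ \mu_I \circ T(b) \circ \eta_I = \mu_I \circ T(a) \circ b,$$
so the transported product $a \cdot b$ is the composite $\mu_I \circ T(a) \circ b : I \to T(I)$.

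It remains to verify that the stated formula
$$m_{T(I)} = \mu_I \circ T(r_{T(I)}) \circ \sigma_{T(I), I} : T(I) \otimes T(I) \to T(I)$$
induces the same operation on elements, i.e.\ that $m_{T(I)} \circ (a \otimes b) \circ l_I^{-1} = \mu_I \circ T(a) \circ b$ for all $a, b : I \to T(I)$. This is a short diagram chase combining (i) naturality of the strength $\sigma$ in its first variable to pull $a$ outside the outer $T$, reducing $\sigma_{T(I),I}\circ(a\otimes T(I))$ to $T(a \otimes I) \circ \sigma_{I,I}$, (ii) the unit constraint for the strength (first diagram of Definition \ref{defStMon}), which expresses $\sigma_{I,I}$ as $T(l_I)^{-1}\circ l_{T(I)}$, and (iii) naturality of $l$ and $r$ together with the coherence identity $l_I = r_I$ at the monoidal unit. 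Once the two descriptions agree on all elements, associativity and the unit axioms for $m_{T(I)}$ follow automatically from the corresponding axioms for composition in $\Ee^T$. The only real obstacle is bookkeeping: one must keep careful track of the unitors and of the placement of $\sigma$ in the symmetric monoidal coherence, but no substantive difficulty arises.
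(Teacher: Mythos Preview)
Your argument has a genuine gap. The verification ``on elements'' $a,b:I\to T(I)$ only shows that the transported enriched multiplication and the stated map $m_{T(I)}$ agree after precomposition with $(a\otimes b)\circ l_I^{-1}:I\to T(I)\otimes T(I)$. In an arbitrary closed symmetric monoidal category $\Ee$ such maps from the unit are not jointly epimorphic (take $\Ee$ to be based simplicial sets or chain complexes), so this does not establish equality of the two morphisms $T(I)\otimes T(I)\to T(I)$. What you have actually computed is the \emph{ordinary} (set-level) composition on $\Ee^T(F_T(I),F_T(I))$, whereas the lemma concerns the \emph{$\Ee$-enriched} composition on $\uEe^T(F_T(I),F_T(I))$; the former is recovered from the latter by applying $\Ee(I,-)$, but this functor is not faithful in general.

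The paper circumvents this by arguing internally in $\Ee$: the enriched composition is the adjoint of the evaluation map $\uEe^T(F_T(I),F_T(I))\otimes_\Ee F_T(I)\to F_T(I)$, and this evaluation is traced through the strength of the forgetful functor $U_T$ (Remark \ref{forgetfulstrength}) and the enriched adjunction to produce precisely the composite $\mu_I\circ T(r_{T(I)})\circ\sg_{T(I),I}$, as a map in $\Ee$. Your final diagram chase (naturality of $\sg$, the unit constraint for the strength, coherence of $l$ and $r$) contains the right ingredients, but it must be performed directly with the identity of $T(I)$ in place of the points $a,b$---equivalently, as a commutative diagram of maps rather than an identity between composites with $(a\otimes b)$.
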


\begin{proof}
We have $T(I)\cong\uEe(I,U_TF_T(I))\cong \uEe^{T}(F_T(I),F_T(I))$ so that $T(I)$ inherits a monoid structure from the $\Ee$-enriched endomorphism object $\uEe^{T}(F_T(I),F_T(I))$.

In order to validate the asserted multiplicative map $m_{T(I)}$ and unit map $e_{T(I)}$, it suffices to show that $m_{T(I)}$ derives through adjunction from evaluation $$eval:\uEe^T(F_T(I),F_T(I))\otimes_\Ee F_T(I)\to F_T(I)$$or, what amounts to the same, from the composite map$$\uEe^T(F_T(I),F_T(I))\otimes U_T(F_T(I))\to U_T(\uEe^T(F_T(I),F_T(I))\otimes_\Ee F_T(I))\xrightarrow{U_T(eval)}U_TF_T(I)$$where the first map is the strength of the forgetful functor $U_T$, cf. Remark \ref{forgetfulstrength}.

This composite map in turn is adjoint to the composite map$$\uEe(I,T(I))\otimes T(I)\xrightarrow{\sg_{\uEe(I,T(I)),I}}T(\uEe(I,T(I))\otimes I)\xrightarrow{T(eval)}T(T(I))\xrightarrow{\mu_I}T(I)$$yielding precisely $m_{T(I)}$ up to the isomorphism $\uEe(I,T(I))\cong T(I)$ which identifies evaluation $\uEe(I,T(I))\otimes I\to T(I)$ with the unit constraint $T(I)\otimes I\cong T(I)$.\end{proof}

\begin{prop}\label{propMorStMon}
Any strong monad $(T,\mu,\eta,\sg)$ on a closed symmetric monoidal category $\Ee$ induces a strong monad transformation $\lambda:-\otimes T(I)\rightarrow T(-) $ given pointwise by
$$
\xymatrix @!0 @C=2.5cm @R=2cm{\relax
X\otimes T(I)    \ar[rr]^{\lambda_X} \ar[rd]_{\sg_{X,I}} && T(X)\\
& T(X\otimes I) \ar[ru]_{T(r_X)}
}
$$
\end{prop}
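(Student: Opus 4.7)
The plan is to verify in turn the three properties packaged in the notion of strong monad transformation from $S:=-\otimes T(I)$ (viewed as a strong monad with strength given by the associator and monad structure induced by the monoid of Lemma \ref{propmonTI}) to $T$: plain naturality of $\lambda$, strong naturality, and compatibility with the units and multiplications. Naturality in $X$ is immediate from the binaturality of $\sg$ and the naturality of the right unitor $r$.

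For strong naturality, one must check the commutativity of
$$
\xymatrix @C=2.5cm @R=1cm{
X\otimes(Y\otimes T(I)) \ar[r]^-{a^{-1}_{X,Y,T(I)}} \ar[d]_{X\otimes \lambda_Y} & (X\otimes Y)\otimes T(I) \ar[d]^{\lambda_{X\otimes Y}} \\
X\otimes T(Y) \ar[r]_-{\sg_{X,Y}} & T(X\otimes Y)
}
$$
Expanding $\lambda_{(-)}=T(r_{(-)})\circ\sg_{(-),I}$, this square reduces to pasting Diagram \ref{diagassbeta} (with $Z=I$) with the naturality square of $\sg$ along $r_Y:Y\otimes I\to Y$ and the coherence triangle $r_{X\otimes Y}\circ a_{X,Y,I}=X\otimes r_Y$. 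Similarly, the unit condition $\lambda_X\circ(X\otimes\eta_I)\circ r_X^{-1}=\eta_X$ follows from Diagram \ref{diagsteta} at $Y=I$, which gives $\sg_{X,I}\circ(X\otimes\eta_I)=\eta_{X\otimes I}$; postcomposing with $T(r_X)$ and applying naturality of $\eta$ along $r_X$ yields $\eta_X$.

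The multiplication condition, which I expect to be the main obstacle, asserts
$$
\lambda_X\circ(X\otimes m_{T(I)})\circ a^{-1}_{X,T(I),T(I)}=\mu_X\circ T(\lambda_X)\circ\lambda_{X\otimes T(I)}
$$
as maps $X\otimes T(I)\otimes T(I)\to T(X)$, with $m_{T(I)}=\mu_I\circ T(r_{T(I)})\circ\sg_{T(I),I}$ from Lemma \ref{propmonTI}. Once $\lambda$ and $m_{T(I)}$ are fully unfolded, both sides become long composites of $\sg$, $\mu$, $T(r_{(-)})$, and associators. The verification is a diagram chase whose ingredients are: the associativity constraint for the strength (Diagram \ref{diagassbeta}), used to re-associate iterated applications of $\sg$; the strong naturality of $\mu$ (Diagram \ref{diagstmu}), used to interchange $\mu_I$ with $\sg$; the naturality of $\sg$ along the right unitors $r_X$ and $r_{T(X)}$; and MacLane-type coherence for associators and unitors. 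I expect no conceptual difficulty beyond careful bookkeeping: once the diagram is subdivided into naturality squares and instances of the two strong-monad coherences just listed, each sub-region commutes for an explicit reason.
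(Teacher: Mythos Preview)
Your proposal is correct and follows essentially the same route as the paper's proof: both verify strong naturality via the associativity constraint for $\sg$ together with naturality of $\sg$ along the unitor, the unit condition via Diagram \ref{diagsteta} and naturality of $\eta$, and the multiplication condition via the associativity constraint for $\sg$, strong naturality of $\mu$, and naturality of $\sg$ and $\mu$ along unitors. The only difference is one of presentation: the paper carries out the multiplication check explicitly by subdividing the rectangle into three labelled regions (one of which is further split into three triangles, one invoking Lemma~\ref{propmonTI}), whereas you list the same ingredients and leave the bookkeeping implicit.
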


\begin{proof}
The natural transformation $\lambda$ is strong because the following diagram
$$
\xymatrix @C=4cm @R=2.3cm{\relax
X\otimes (Y\otimes T(I)) \ar[r]^{a_{X,Y,T(I)}} \ar[d]_{X\otimes \sg_{Y,I}} & (X\otimes Y)\otimes T(I)  \ar[d]^{\sg_{X\otimes Y,I}}\\
X\otimes T(Y\otimes I) \ar[r]^{T(a_{X,Y,I})\circ\sg_{X,Y\otimes I}} \ar[d]_{X\otimes T(r_Y) } & T((X\otimes Y)\otimes I) \ar[d]^{T(r_{X\otimes Y}) }\\
X\otimes T(Y) \ar[r]^{\sg_{X,Y}} & T(X\otimes Y)
}
$$commutes (the upper diagram commutes by the associativity constraint of the strength and the lower diagram commutes by the naturality of the strength).
It remains to be shown that $\lambda$ is a morphism of monads. The unit constraint of $\lambda$
follows from the commutativity of the following diagram
$$
\xymatrix @!0 @C=6cm @R=2cm{\relax
 X\otimes T(I)  \ar[r]^{\sg_{X,I}} & T(X\otimes I) \ar[r]^{T(r_X)} & T(X) \\
& X\otimes I \ar[u]_{\eta_{X\otimes I}} \ar[lu]_{X\otimes \eta_I} \ar[d]^{r_X} \\
& X \ar[ruu]_{\eta_X} \ar[luu]^{\tilde{\eta}_X}
}
$$
where the upper left triangle commutes by strong naturality of $\eta$, the lower left triangle by definition of $\tilde{\eta}$, and
the right ``square'' by naturality of $\eta$.

The multiplicative constraint of $\lambda$
$$
\xymatrix @!0 @C=5cm @R=2.5cm{\relax
(X\otimes T(I))\otimes T(I)   \ar[r]^{(\lambda\circ \lambda)_X} \ar[d]_{\tilde{\mu}_X} & T(T(X)) \ar[d]^{\mu_X} \\
 X\otimes T(I) \ar[r]^{\lambda_X} & T(X)
}
$$
follows from the commutativity of the following diagram
$$
\xymatrix @!0 @C=5cm @R=2cm{\relax
(X\otimes T(I))\otimes T(I) \ar@{}[rrd]|{I}  \ar[rr]^{\sg_{X\otimes T(I),I}} \ar[d]_{\tilde{\mu}_X} && T((X\otimes T(I))\otimes I) \ar[d]^{T(r_{X\otimes T(I)})}   \\
 X\otimes T(I) \ar@{}[rrd]|{II} \ar[d]_{\sg_{X,I}} & X\otimes T^2(I) \ar[l]_{X\otimes \mu_I}  \ar[r]^{\sg_{X,T(I)}} & T(X\otimes T(I)) \ar[d]^{T(\sg_{X,I})}\\
T(X\otimes I) \ar[d]_{T(r_X)} \ar@{}[rrd]|{III} && T^2(X\otimes I) \ar[d]^{T^2(r_X)} \ar[ll]_{\mu_{X\otimes I}}\\
T(X) && T^2(X )\ar[ll]_{\mu_X}
}
$$in which Diagrams II and III commute by strong naturality of $\mu$, while Diagram I decomposes into three triangles
$$
\xymatrix @!0 @C=5cm @R=2cm{\relax
(X\otimes T(I))\otimes T(I) \ar[rd]^{(X\otimes\sg_{T(I),I})\circ a} \ar[rr]^{T(a)\circ\sg_{X\otimes T(I),I}} \ar[d]_{\tilde{\mu}_X} && T(X\otimes (T(I)\otimes I)) \ar[d]^{T(X\otimes r_{T(I)})}   \\
 X\otimes T(I) & X\otimes T(T(I)\otimes I) \ar[ru]^{\sg_{X,T(I)\otimes I}}\ar[l]_{X\otimes(\mu_I\circ T(r_{T(I)}))}  \ar[r]^{\sg_{X,T(I)}\circ(X\otimes T(r_{T(I)}))} & T(X\otimes T(I))
}
$$
where the left triangle commutes by Lemma \ref{propmonTI}, the middle triangle by the associativity constraint of $\sg$, and the right triangle by naturality of $\sg$.\end{proof}

\begin{cor}[cf. Proposition 1.9 in \cite{BMDCAO}]\label{propVSMMon}A strong monad $\,T$ on $\Ee$ is linear if and only if the monad transformation $\lambda:-\otimes T(I)\to T$ induces an equivalence between the category of $T$-algebras and the category of right $T(I)$-modules in $\Ee$.\end{cor}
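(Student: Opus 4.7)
The plan is to reduce both implications to the statement that $\lambda_X:X\otimes T(I)\to T(X)$ is invertible for every object $X$ of $\Ee$. Since by Proposition \ref{propMorStMon} we have $\lambda_X=T(r_X)\circ\sg_{X,I}$ with $T(r_X)$ automatically an isomorphism, this is in turn equivalent to invertibility of $\sg_{-,I}$.

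For the direction $(\Rightarrow)$, if $T$ is linear then $\sg_{X,I}$ is invertible by definition, hence so is each $\lambda_X$. Thus $\lambda$ is an isomorphism of monads, and the induced functor $\lambda^*:\Ee^T\to\Mod_{T(I)}$ is an isomorphism of categories (with inverse $(\lambda^{-1})^*$), in particular an equivalence.

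For the direction $(\Leftarrow)$, write $S=-\otimes T(I)$ so that $\Mod_{T(I)}=\Ee^S$. The restriction functor $\lambda^*:\Ee^T\to\Ee^S$ satisfies $U_S\circ\lambda^*=U_T$. If $\lambda^*$ is an equivalence, its pseudo-inverse $\lambda_!$ makes $\lambda_!\circ F_S$ a left adjoint to $U_T$, so uniqueness of adjoints yields a canonical natural isomorphism $\lambda_!\circ F_S\cong F_T$; applying $\lambda^*$ produces an $S$-algebra isomorphism $\phi_X:F_S(X)\weq\lambda^*F_T(X)$. On the other hand, the monad-morphism axiom for $\lambda$ says precisely that $\lambda_X$ itself is an $S$-algebra map $F_S(X)\to\lambda^*F_T(X)$, and strong naturality of $\eta$ (Diagram \ref{diagsteta}) together with naturality of $\eta$ yields $\lambda_X\circ\eta^S_X=\eta^T_X$. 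Under the free/forgetful adjunction $\mathrm{Hom}_{\Ee^S}(F_S(X),\lambda^*F_T(X))\cong\mathrm{Hom}_\Ee(X,T(X))$, both $\phi_X$ and $\lambda_X$ correspond to $\eta^T_X$, so $\phi_X=\lambda_X$ and $\lambda_X$ is invertible.

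It remains to upgrade invertibility of $\sg_{X,I}$ to invertibility of $\sg_{X,Y}$ for all $Y$. Setting $Z=I$ in Diagram \ref{diagassbeta} expresses $\sg_{X,Y\otimes I}$ as a composite of $\sg_{X\otimes Y,I}$, $\sg_{Y,I}$ and associator/unitor isomorphisms, yielding invertibility of $\sg_{X,Y\otimes I}$; naturality of $\sg$ in the second variable with respect to $r_Y:Y\otimes I\to Y$ then transports invertibility to $\sg_{X,Y}$. The main obstacle is the identification of the abstract isomorphism $\phi_X$ produced by the equivalence with the concrete map $\lambda_X$, which we resolve by the common characterisation via $\eta^T_X$ under the universal property of $F_S(X)$.
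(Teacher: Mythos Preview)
Your proof is correct and follows the same route as the paper: both establish the converse direction by identifying the invertible unit of the $(\lambda_!,\lambda^*)$-adjunction at a free module $X\otimes T(I)$ with $\lambda_X$, and then use the associativity constraint of $\sg$ to pass from invertibility of $\sg_{-,I}$ to full linearity. The paper simply asserts this identification, whereas you supply the argument via the universal property of $F_S(X)$ and the common transpose $\eta^T_X$ (note only that ``applying $\lambda^*$'' to $\lambda_!F_S\cong F_T$ tacitly includes composition with the unit $F_S\Rightarrow\lambda^*\lambda_!F_S$, which your subsequent transpose computation in effect confirms).
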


\begin{proof}If $T$ is linear, $\lambda:-\otimes T(I)\to T$ is invertible and induces thus an equivalence. Conversely, if $\lambda^*$ is an equivalence, there exists a left adjoint $\lambda_!:\Mod_{T(I)}\to\Ee^T$ taking free $T(I)$-modules $X\otimes T(I)$ to free $T$-algebras $(T(X),\mu_X)$. The invertible unit of the $(\lambda_!,\lambda^*)$-adjunction at a free $T(I)$-module $X\otimes T(I)$ may then be identified with $\lambda_X$. This implies that the strength $\sg_{X,I}:X\otimes T(I)\to T(X\otimes I)$ is invertible for all $X$. The associativity constraint of $\sg$ shows finally that $\sg_{X,Y}$ is invertible for all $X,Y$, i.e. $T$ is linear.\end{proof}

\begin{cor}The category of monoids in $\Ee$ is equivalent to the category of linear monads on $\Ee$.\end{cor}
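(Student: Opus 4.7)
The plan is to exhibit the equivalence by constructing mutually inverse functors: in one direction, send a monoid $M$ in $\Ee$ to the linear monad $T_M = -\otimes M$ (with strength the associativity constraint, unit $\eta = e_M\otimes\mathrm{id}$, and multiplication $\mathrm{id}\otimes m_M$), and in the other direction, send a linear monad $(T,\mu,\eta,\sg)$ to the monoid $T(I)$ with the structure described in Lemma \ref{propmonTI}. A morphism of monoids $f:M\to N$ is sent to the strong monad morphism $-\otimes f:T_M\to T_N$, while a strong monad morphism $\psi:T\to T'$ is sent to its component $\psi_I:T(I)\to T'(I)$, which is a monoid morphism because Lemma \ref{propmonTI} expresses the monoid structure functorially in terms of $\sg$, $\mu$ and $\eta$.

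For one composite, given a linear monad $T$, Proposition \ref{propMorStMon} produces the strong monad morphism $\lambda:-\otimes T(I)\to T$, i.e.\ a strong monad morphism $T_{T(I)}\to T$. By linearity of $T$, the strength $\sg_{X,I}$ is invertible for all $X$, hence $\lambda_X = T(r_X)\circ\sg_{X,I}$ is invertible, so $\lambda$ is an isomorphism of strong monads natural in $T$. For the other composite, given a monoid $M$, the monoid $T_M(I) = I\otimes M\cong M$ via the left unit constraint $l_M$; one has to check that the monoid structure transported from the endomorphism monoid of the free $T_M$-algebra on $I$ (as described in Lemma \ref{propmonTI}) matches the original structure on $M$ under this isomorphism. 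Unwinding the formula $m_{T_M(I)} = \mu_I\circ T_M(r_{T_M(I)})\circ\sg_{T_M(I),I}$ with $T_M = -\otimes M$, $\mu = \mathrm{id}\otimes m_M$, and $\sg$ given by associativity, this reduces via standard coherence in $\Ee$ to $l_M\circ(l_M\otimes\mathrm{id})^{-1}\circ m_M\circ(l_M\otimes l_M) = m_M$, as required; similarly $e_{T_M(I)} = \eta_I$ corresponds to $e_M$.

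The two natural isomorphisms $\lambda:T_{T(I)}\xrightarrow{\sim}T$ and $l:T_{M}(I)\xrightarrow{\sim}M$ are easily checked to be natural in $T$ and $M$ respectively, yielding the claimed equivalence of categories.

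The main obstacle is the verification that the transported monoid structure on $T_M(I)$ agrees with the original structure of $M$; this is a coherence computation that has to be done carefully using the explicit formulas of Lemma \ref{propmonTI} and the definition of the strength of $T_M$ as the associativity constraint of $\Ee$. Everything else follows from Corollary \ref{propVSMMon} (or rather its proof) together with the observation that $\lambda$ is automatically a strong monad morphism.
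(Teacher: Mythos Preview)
Your proof is correct and follows essentially the same approach as the paper: both exhibit the functors $M\mapsto -\otimes M$ and $T\mapsto T(I)$ as quasi-inverse to one another. The paper's proof is a single sentence stating this, while you have filled in the details (invertibility of $\lambda$ for linear $T$ via Proposition~\ref{propMorStMon}, the coherence check for $T_M(I)\cong M$, and functoriality on morphisms), all of which are the natural verifications the paper leaves implicit. One minor quibble: your closing reference to Corollary~\ref{propVSMMon} is a bit off-target, since that corollary concerns the algebra categories rather than the monad/monoid correspondence itself; the relevant input is Proposition~\ref{propMorStMon}, which you already invoke.
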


\begin{proof}The functor which takes a monoid $M$ to the linear monad $-\otimes M$ admits as quasi-inverse the functor which takes a linear monad $T$ to the monoid $T(I)$.\end{proof}

In virtue of the preceding two corollaries the natural transformation of strong monads $\lambda:-\otimes T(I)\to T$ may be considered as a \emph{linear approximation} of the monad $T$. We conjecture that linear approximations exist for any strong monad on symmetric monoidal categories regardless of an existing closed structure. 

\subsection{Gabriel's characterisation of categories of $R$-modules}\label{EnrCar}

We give a short proof of Gabriel's \cite{GDCA} characterisation of module categories among Grothendieck abelian categories using the formalism of strong monads. Recall that any additive category is canonically enriched in the closed symmetric monoidal category $\Ab$ of abelian groups. Additive functors are the same as $\Ab$-enriched functors.

By a module category we mean a category of $R$-modules for a unital associative ring $R$. Module categories are particular instances of \emph{Grothendieck abelian categories}, i.e. abelian categories which have filtered colimits commuting with finite limits and which have a generator. These categories are complete, cocomplete, wellpowered and co-wellpowered so that a functor between Grothendieck abelian categories admits a left (resp. right) adjoint if and only if the functor preserves limits (resp. colimits). In particular, any Grothendieck abelian category is \textit{tensored} over the closed symmetric monoidal category $\Ab$ of abelian groups (cf. Section \ref{Str}).

\begin{prop}\label{propTVSMon}For each object $P$ of finite type of a Grothendieck abelian category $\Cc$, the functor $\uHom_{\Cc}\left(P,-\right):\Cc \rightarrow \Ab $ and its left adjoint $-\otimes_\Ab P:\Ab\rightarrow\Cc$ induce a finitary linear monad on the category $\Ab$ of abelian groups.\end{prop}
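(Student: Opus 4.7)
The plan is to derive both ``finitary'' and ``linear'' from the observation that the adjunction $-\otimes_\Ab P \dashv \uHom_\Cc(P,-)$ is $\Ab$-enriched between $\Ab$-tensored categories, and then invoke Kock's correspondence. Indeed, $\Cc$ is $\Ab$-tensored with $-\otimes_\Ab P$ playing the role of the tensoring with $P$, while $\uHom_\Cc(P,-)$ is the enriched hom out of $P$. By Corollary \ref{cormons-e} together with Proposition \ref{th2-cat}, the induced $\Ab$-enriched monad $T(X)=\uHom_\Cc(P,X\otimes_\Ab P)$ on $\Ab$ carries a canonical strength and is therefore a strong monad; Lemma \ref{propmonTI} then identifies its monoid $T(\ZZ)$ with $M:=\mathrm{End}_\Cc(P)$.

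Finitariness is essentially free: $-\otimes_\Ab P$ preserves all colimits as a left adjoint, and $\uHom_\Cc(P,-)$ preserves filtered colimits by the hypothesis that $P$ is of finite type. Linearity is the substantive step. By Proposition \ref{propMorStMon}, together with the associativity constraint of Diagram \ref{diagassbeta} (which reduces invertibility of $\sigma_{X,Y}$ to invertibility of $\sigma_{X,I}$), it suffices to check that the linear approximation
\[
\lambda_X \colon X\otimes_\Ab M \longrightarrow T(X)
\]
is invertible for every abelian group $X$. On a finitely generated free group $X=\ZZ^n$ this reduces to the familiar additivity identification $M^n\cong\uHom_\Cc(P,P^n)$. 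To extend to arbitrary $X$ I would take a free presentation $\ZZ^{(I)}\to\ZZ^{(J)}\to X\to 0$, tensor with $P$ to obtain a right-exact sequence $P^{(I)}\to P^{(J)}\to X\otimes_\Ab P\to 0$ in $\Cc$, apply $\uHom_\Cc(P,-)$, and compare with the corresponding presentation of $X\otimes_\Ab M$ in $\Ab$.

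The main obstacle lies precisely in this final comparison: one needs $\uHom_\Cc(P,-)$ to commute both with the coproducts $P^{(I)},\,P^{(J)}$ and with the cokernel defining $X\otimes_\Ab P$. The hypothesis ``$P$ of finite type'' must therefore be read strongly enough to force this commutation---in effect, that $\uHom_\Cc(P,-)$ is cocontinuous, equivalently that $P$ is small projective in the Gabriel sense. Once this is granted, $\lambda$ is invertible for all $X$, the strength of $T$ is invertible in both variables, and $T$ is linear; Corollary \ref{propVSMMon} then yields the Gabriel-style equivalence between $T$-algebras in $\Ab$ and right $M$-modules that motivates the whole discussion.
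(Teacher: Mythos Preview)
Your approach and the paper's diverge precisely at the step of extending invertibility of $\lambda_X$ from finitely generated free $X$ to arbitrary $X$. The paper does \emph{not} pass through a free presentation; it argues instead that both $X\mapsto X\otimes M$ and $X\mapsto T(X)=\uHom_\Cc(P,X\otimes_\Ab P)$ commute with filtered colimits in $X$, and asserts that this alone reduces the verification to finitely generated free $X$, where the strength is the canonical additivity isomorphism $\bigoplus_n\uHom_\Cc(P,P)\cong\uHom_\Cc(P,P^{\oplus n})$.

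Your suspicion that ``finite type'' alone is insufficient is, however, well-founded: the paper's reduction step is itself not justified, since not every abelian group is a filtered colimit of finitely generated \emph{free} abelian groups (only the torsion-free ones are, by Lazard's theorem). In fact the proposition fails as literally stated. Take $\Cc=\Ab$ and $P=\ZZ\oplus\ZZ/p$, which is finitely presented and hence of finite type; then $M=\mathrm{End}(P)\cong\ZZ\oplus(\ZZ/p)^2$ as an abelian group, so $\ZZ/p\otimes M\cong(\ZZ/p)^3$, whereas $T(\ZZ/p)=\uHom(P,P/pP)=\uHom(\ZZ\oplus\ZZ/p,(\ZZ/p)^2)\cong(\ZZ/p)^4$, and $\lambda_{\ZZ/p}$ cannot be an isomorphism. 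Your proposed reading of ``finite type'' as small projective (so that $\uHom_\Cc(P,-)$ is cocontinuous) is exactly the hypothesis that rescues both arguments, and it is in any case available in Theorem~\ref{thEnrGab}, the only place the proposition is applied.
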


\begin{proof}Both functors preserve filtered colimits and are $\Ab$-enriched so that they induce a finitary, strong monad. It remains to be proved that the strength of the induced monad $T(X)=\uHom_\Cc(P,X\otimes_\Ab P)$
$$\sg_{X,\ZZ}:X \otimes\uHom_{\Cc}(P,P)\rightarrow\uHom_{\Cc}(P,X \otimes_{\Ab}P)$$ is an isomorphism for each abelian group $X$.  Since domain and codomain of this morphism commute with filtered colimits in $X$, it suffices to establish the property for free abelian groups of finite rank. Let $X$ be a free abelian group of rank $n$. Then the left hand side may be identified with $\bigoplus_{n} \uHom_{\Cc}(P,P)$, the right hand side with $\uHom_{\Cc}(P,P\oplus\cdots\oplus P)$, and the strength itself with the canonical isomorphism $\bigoplus_{n} \uHom_{\Cc}(P,P)\cong\uHom_{\Cc}\left(P,P \oplus\cdots\oplus P\right)$. The monad $T$ is thus linear.\end{proof}

\begin{thm}[Gabriel \cite{GDCA}]\label{thEnrGab}A Grothendieck abelian category is equivalent to a category of $R$-modules if and only if it contains a projective generator $P$ of finite type. In this case, the ring $R$ may be chosen to be the endomorphism ring of $P$.\end{thm}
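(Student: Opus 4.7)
The plan is to apply Beck's monadicity theorem to the $\Ab$-enriched adjunction $-\otimes_\Ab P:\Ab\lrto\Cc:\uHom_\Cc(P,-)$ of Proposition \ref{propTVSMon}, and then to combine the resulting equivalence $\Cc\simeq\Ab^T$ with Corollary \ref{propVSMMon} applied to the linear monad $T=\uHom_\Cc(P,-\otimes_\Ab P)$. The ``only if'' direction is immediate: in $\Mod_R$ the free module of rank one $R$ is a projective generator of finite type whose endomorphism ring is $R$ itself.

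For the ``if'' direction, let $P$ be a projective generator of finite type in $\Cc$ and set $U=\uHom_\Cc(P,-)$. I would verify the three hypotheses of Beck's theorem. A left adjoint to $U$ is provided by the tensoring $-\otimes_\Ab P$, which exists because any Grothendieck abelian category is tensored over $\Ab$, as noted just before Proposition \ref{propTVSMon}. Preservation of coequalisers follows from projectivity of $P$: in an abelian category every coequaliser is the cokernel of a difference, and $U$ preserves cokernels precisely because $P$ is projective (it automatically preserves kernels, being a right adjoint). Conservativity uses the generator property of $P$: the functor $U$ is exact, and if $U(X)=0$ then applying the generator condition to $\mathrm{id}_X$ forces $X=0$; an exact functor with this vanishing property reflects isomorphisms, as one sees by applying $U$ to kernel and cokernel of a morphism whose image under $U$ is invertible.

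Beck's theorem then identifies the comparison functor $\Cc\to\Ab^T$ as an equivalence. Since Proposition \ref{propTVSMon} ensures that $T$ is a linear monad (this is where the ``finite type'' hypothesis is consumed, to reduce the invertibility of the strength to the case of free abelian groups of finite rank), Corollary \ref{propVSMMon} supplies a further equivalence $\Ab^T\simeq\Mod_{T(\ZZ)}$. Evaluating $T(\ZZ)=\uHom_\Cc(P,\ZZ\otimes_\Ab P)\cong\uHom_\Cc(P,P)$ identifies the coefficient ring with the endomorphism ring $R$ of $P$, and the composite equivalence $\Cc\simeq\Mod_R$ is the desired conclusion. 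The most delicate step is the conservativity check for $U$: it is here that the bare ``generator'' assumption must be translated into a reflection-of-isomorphisms statement, whereas projectivity and finite type are consumed respectively in preservation of coequalisers and in linearity of $T$, both already largely handled by the cited earlier results.
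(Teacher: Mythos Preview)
Your proof is correct and follows essentially the same route as the paper's: both reduce to showing the comparison functor $\Cc\to\Ab^T$ is an equivalence (using projectivity of $P$ for exactness of $U$ and the generator property for conservativity), then invoke Proposition \ref{propTVSMon} and Corollary \ref{propVSMMon} to identify $\Ab^T$ with $\Mod_R$ for $R=\uHom_\Cc(P,P)$. The only cosmetic difference is that you cite Beck's monadicity theorem as a black box, whereas the paper unwinds the equivalent criterion ``conservative right adjoint with fully faithful left adjoint'' directly.
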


\begin{proof}Since $P$ is of finite type, we have a linear monad $T(X)=\uHom_{\Cc}(P,X \otimes_\Ab P)$ on the category of abelian groups by Proposition \ref{propTVSMon}. According to Corollary \ref{propVSMMon}, the category $\Ab^T$ of $T$-algebras is equivalent to the category of right $R$-modules where $R=T(\ZZ)=\uHom_\Cc(P,P)$. It suffices thus to show that the comparison functor $\phi:\Cc\to\Ab^T$ is an equivalence of categories. Since $\Cc$ is cocomplete and $P$ is projective, the functor $\uHom_\Cc(P,-):\Cc\to\Ab$ is right exact and the comparison functor $\phi$ has a fully faithful left adjoint. Since $P$ is a generator, the functor $\uHom_\Cc(P,-)$ is faithful, and hence so is $\phi$. Therefore, $\phi$ reflects monomorphisms and epimorphisms, and hence also isomorphisms. This shows that $\phi$ is conservative and admits a fully faithful left adjoint, i.e. $\phi$ is an equivalence of categories.\end{proof}

\begin{rem}Applying the preceding theorem to $\,\Cc=\Mod_S$ we obtain Morita's criterion \cite{MorDM} for when two rings $R,S$ have equivalent categories of modules: this is the case precisely when there exists an $R\!-\!S$-bimodule ${}_RP_S$ such that the right $S$-module $P_S$ is a projective generator of $\Mod_S$ of finite type fulfilling $R\cong\uHom_S(P_S,P_S)$.
\end{rem}

\section{Excisive model categories}\label{HMT}

In this section we introduce a class of pointed model categories, called \emph{excisive}, which roughly speaking capture half of the structure of a \emph{stable} model category. The category of based spaces, localised with respect to a generalised homology theory, is an example of an excisive model category which is not stable. The category of nonnegatively graded chain complexes over a ring (equipped with the projective model structure) is another such example. The category of $\Gamma$-spaces (equipped with the stable model structure of Bousfield-Friedlander) is a third such example.

Our main interest in excisive model categories comes from the fact that \emph{homotopy pushouts} therein can be characterised by having weakly equivalent parallel \emph{homotopy cofibres} in much the same way as pushouts in abelian categories can be characterised by having isomorphic parallel cokernels. This will be very helpful in establishing the main theorem of Section \ref{Mth}.

A model category is  \emph{left proper} (resp. \emph{right proper}) if weak equivalences are closed under pushout (resp. pullback) along cofibrations (resp. fibrations), cf. \cite{BouHT}.

\begin{defn}\label{excisive}A model category is said to be \emph{excisive} (resp. \emph{stably excisive}) if the category is pointed, the model structure is left proper, and the derived suspension functor is conservative (resp. fully faithful).

A model category is said to be \emph{coexcisive} (resp. \emph{stably coexcisive}) if the category is pointed, the model structure is right proper, and the derived loop functor is conservative (resp. fully faithful).\end{defn}

Recall (cf. Hovey \cite{HovMC}) that a \emph{stable model category} is a pointed model category such that the suspension-loop adjunction is a Quillen equivalence, i.e. induces a derived self-equivalence of the homotopy category. A proper pointed model category is thus a stable model category if and only if it is simultaneously stably excisive and stably coexcisive. Note that one of the two stability conditions may be dropped because for an adjoint pair of functors to be an adjoint equivalence it is enough to assume that one part is fully faithful and the other part is conservative.

Our terminological choice has been guided by the fact that the category of based spaces, localised with respect to a generalised homology theory $E$, is an example of an excisive model category. The derived suspension is conservative here because of the Suspension Isomorphism Theorem $E_\bullet(X,*)\cong E_{\bullet+1}(\Sigma X,*)$. The latter follows from \emph{homological excision} $E_\bullet(CX,X)\cong E_\bullet(CX/X,*)$ and the five-lemma.

Although excisiveness and coexcisiveness are dual concepts, they behave quite differently in practice. For instance, the category of simplicial groups (equipped with the Kan-Quillen model structure \cite{QuiHA}) is a model for the homotopy theory of based \emph{connected} spaces, and is easily seen to be a \emph{coexcisive} model category.

We fix for any object $X$ of our excisive model category $\Ee$ a factorisation of $X\to *$ into a cofibration $X\ito CX$ followed by a weak equivalence $CX\weq *$, and call $CX$ a \emph{cone} on $X$. We assume that $\Ee$ possesses a \emph{functorial cone construction}. This technical assumption is not absolutely necessary, but simplifies the arguments and is satisfied in all our examples. Accordingly, we define for any morphism $f:X\to Y$, the \emph{mapping cone} $C_f$ to be the pushout $C_f=Y\cup_XCX$. The \emph{suspension} $\Sigma X$ of $X$ is then defined by $\Sigma X=C_{(X\to *)}=CX/X$.

\begin{rem}\label{leftproper}The mapping cone $C_f$ is a homotopy cofibre of $f:X\to Y$. For a \emph{cofibration} $f:X\ito Y$ the canonical map $C_f\to Y/X$ is a weak equivalence if the model category is left proper. Indeed, consider the following commutative diagram
$$
\xymatrix @!0 @C=2.5cm @R=2cm{\relax
 X \ar@{ >->}[r] \ar@{ >->}[d]_f &  CX \ar[r]^\sim \ar@{ >->}[d] & \star \ar@{ >->}[d] \\
 Y \ar@{ >->}[r] &  C_f \ar[r] & Y/X
}
$$
in which the lower horizontal sequence composes to the quotient map $Y\to Y/X$. Left square and outer rectangle are pushouts so that the right square is a pushout as well and the induced map $C_f\to Y/X$ is a weak equivalence by left properness. It is therefore consistent to call the sequence $X\ito Y\to Y/X$ induced by a cofibration a \emph{cofibre sequence}. Without left properness much more effort is needed to get a homotopically invariant notion of cofibre sequence, cf. \cite{QuiHA, HovMC}.

Left properness also implies that the suspension $\Sigma X$ is weakly equivalent to the suspension $\Sigma(X_c)$ of any \emph{cofibrant replacement} $X_c\weq X$ of $X$. The derived suspension functor is therefore conservative if and only if $\phi:X\to Y$ is a weak equivalence whenever $\Sigma\phi:\Sigma X\to\Sigma Y$ is a weak equivalence. In other words, in an excisive model category the suspension functor is ``homotopically conservative''.

The derived suspension functor is fully faithful if and only if for any \emph{fibrant replacement} $\Sigma X\weq (\Sigma X)_f$ the adjoint $X\to\Omega((\Sigma X)_f)$ is a weak equivalence. Since fully faithful functors are conservative, any stably excisive model category is an excisive model category. The converse is not true, cf. Remark \ref{crosseffect}.\end{rem}

\begin{prop}\label{cofibre}A left proper pointed model category is excisive if and only if the following saturation property holds:\vspace{1ex}

For every natural transformation of cofibre sequences
\begin{diagram}{}\label{cofibre1}
$$
\xymatrix @!0 @C=2.5cm @R=2cm{\relax
 X \ar@{ >->}[r]^{f} \ar[d]_{\alpha} &  Y \ar[r]^{g} \ar[d]_{\beta} & Y/X \ar[d]_{\gamma} \\
 X' \ar@{ >->}[r]^{f'} &  Y' \ar[r]^{g'} & Y'/X'
}
$$
\end{diagram}
if two among $\alpha,\beta,\gamma$ are weak equivalences then so is the third.\end{prop}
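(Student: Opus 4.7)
The two implications require rather different techniques.

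For the ``if'' direction, given any $\phi:X\to Y$ with $\Sigma\phi$ a weak equivalence, I apply the saturation property to the natural transformation of cofibre sequences $(X\ito CX\to\Sigma X)\to(Y\ito CY\to\Sigma Y)$ induced by the functorial cone construction. The middle vertical $C\phi:CX\to CY$ is a weak equivalence between acyclic objects, so saturation together with the hypothesis on $\Sigma\phi$ forces $\phi$ itself to be a weak equivalence. Combined with Remark \ref{leftproper}, which identifies conservativity of the derived suspension with this ``homotopical'' version, this yields excisiveness.

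For the ``only if'' direction, I split into three cases. The case ``$\alpha,\beta$ weak equivalences imply $\gamma$ weak equivalence'' follows directly from the gluing lemma for pushouts along cofibrations in a left proper model category, applied to the defining pushout squares for $Y/X$ and $Y'/X'$; no excisiveness is required here. The remaining two cases are handled by one Puppe extension to the right. Factor the quotient $Y\to Y/X$ functorially as a cofibration $Y\ito Z$ followed by a weak equivalence $Z\weq Y/X$, and similarly on the primed side. By left properness (cf.\ Remark \ref{leftproper}) the strict cofibre $Z/Y$ is naturally weakly equivalent to $\Sigma X$, and functoriality lifts the original natural transformation to a natural transformation of strict cofibre sequences $(Y\ito Z\to Z/Y)\to(Y'\ito Z'\to Z'/Y')$ whose middle and right verticals are weakly equivalent to $\gamma$ and $\Sigma\alpha$ respectively. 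The case ``$\beta,\gamma$ weak equivalences imply $\alpha$ weak equivalence'' now reduces to applying the first case to the extended transformation, yielding $\Sigma\alpha$ a weak equivalence, whence conservativity of the derived suspension gives $\alpha$ a weak equivalence. The case ``$\alpha,\gamma$ weak equivalences imply $\beta$ weak equivalence'' reduces analogously to applying the previous case to the extended transformation, using that $\Sigma$ preserves weak equivalences by left properness.

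The main obstacle is the Puppe identification $Z/Y\weq\Sigma X$ natural in $\alpha$: this requires left properness to freely interchange strict cofibres with homotopy cofibres, and functorial factorisations to keep the identification compatible with the ambient map of cofibre sequences.
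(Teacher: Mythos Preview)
Your proof is correct and follows essentially the same route as the paper's. The one simplification the paper makes is to take the explicit mapping cone $C_f = Y\cup_X CX$ as the factorisation $Y\ito Z\weq Y/X$, so that the Puppe identification $Z/Y = C_f/Y = CX/X = \Sigma X$ holds on the nose and is automatically natural in $(\alpha,\beta)$, dissolving the obstacle you flag at the end.
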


\begin{proof}In any left proper pointed model category, if $\alpha$ and $\beta$ are weak equivalences then so is $\gamma$. Indeed, the quotients $Y/X$ and $Y'/X'$ are weakly equivalent to the homotopy cofibres $C_{f}$ and $C_{f'}$, and $\alpha,\beta$ induce a weak equivalence $C_f\to C_{f'}$ so that the $2$-out-of-$3$ property of weak equivalences allows us to conclude.

Assume now that the model category is excisive and that $\beta$ and $\gamma$ are weak equivalences. We get then the following natural transformation of cofibre sequences

\begin{diagram}{}\label{cofibre2}
$$
\xymatrix @!0 @C=2.5cm @R=2cm{\relax
 Y \ar@{ >->}[r] \ar[d]_{\beta} &  C_f \ar[r] \ar[d]_{\tilde{\gamma}} & C_f/Y =\Sigma X\ar[d]_{\Sigma\alpha} \\
 Y' \ar@{ >->}[r] &  C_{f'} \ar[r] & C_{f'}/Y'=\Sigma Y
}
$$
\end{diagram}
in which $\tilde{\gamma}$ is a weak equivalence by the $2$-out-$3$ property of weak equivalences, so that the map $\Sigma\alpha$ is a weak equivalence according to the preceding argument. The map $\alpha:X\to X'$ is therefore a weak equivalence as well, since the derived suspension functor is conservative. Let us finally assume that $\alpha$ and $\gamma$ in Diagram \ref{cofibre1} are weak equivalences. Then $\tilde{\gamma}$ and $\Sigma\alpha$ in Diagram \ref{cofibre2} are weak equivalences so that by the preceding proof $\beta:Y\to Y'$ is a weak equivalence as well.

Conversely, if the saturation property for cofibre sequences holds, then for any cofibration $X\ito Y$ we get the following natural transformation of cofibre sequences
$$
\xymatrix @!0 @C=2.5cm @R=2cm{\relax
 X \ar@{ >->}[r] \ar[d]_{f} &  CX \ar[r] \ar[d]^{\sim}_{Cf} & \Sigma X \ar[d]_{\Sigma f} \\
 Y \ar@{ >->}[r] &  CY \ar[r] & \Sigma Y
}
$$
so that $f$ is a weak equivalence if and only if $\Sigma f$ is, i.e. the derived supension functor is conservative.\end{proof}

\begin{prop}\label{cofibrecor}A left proper pointed model category is excisive if and only if the following two conditions are satisfied:
\begin{enumerate}
\item[(a)]A cofibration $X\ito Y$ is acyclic whenever its quotient $Y/X$ is.
\item[(b)]A map $h:Y\to Y'$ of cofibrations under $X$
$$
\xymatrix @C=1.5cm @R=1.2cm{\relax
& X\ar@{ >->}[ld] \ar@{ >->}[rd]\\
Y \ar[rr]^{h}  && Y'
}
$$
is a weak equivalence whenever its quotient map $h/X:Y/X\to Y'/X$ is.
\end{enumerate}\end{prop}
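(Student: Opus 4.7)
The plan is to reduce both implications to the saturation property of cofibre sequences established in Proposition \ref{cofibre}.

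\textbf{Forward direction.} Both conditions (a) and (b) arise as special instances of saturation. For (a), apply saturation to the natural transformation of cofibre sequences with top row $X = X \to *$ and bottom row $X \ito Y \to Y/X$: the vertical maps $\alpha = \mathrm{id}_X$ and $\gamma : * \to Y/X$ are weak equivalences (the latter since $Y/X$ is acyclic), so the middle vertical $f: X \ito Y$ is a weak equivalence. For (b), apply saturation to the transformation with top row $X \ito Y \to Y/X$ and bottom row $X \ito Y' \to Y'/X$ and vertical maps $\mathrm{id}_X$, $h$, $h/X$; the outer ones are weak equivalences by hypothesis, forcing $h$ to be one.

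\textbf{Converse direction.} By Remark \ref{leftproper}, showing the derived suspension functor is conservative reduces to showing that $\Sigma f$ a weak equivalence implies $f$ a weak equivalence, and via the standard cofibrant-replacement and factorisation tricks one may further assume $f: X \ito Y$ is a cofibration between cofibrant objects. Step one extracts from (b) the ``$\alpha, \gamma \Rightarrow \beta$'' case of saturation: given such a transformation, form the pushout $P = X' \cup_X Y$; a direct pushout computation shows $P/X' \cong Y/X$, and the canonical map $h: P \to Y'$ induced by $f'$ and $\beta$ (compatible on $X$ by commutativity of the original diagram) is a map of cofibrations under $X'$ whose quotient $h/X'$ is precisely $\gamma$. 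Condition (b) then yields that $h$ is a weak equivalence; left properness applied to the weak equivalence $\alpha$ pushed out along the cofibration $X \ito Y$ gives that $Y \to P$ is a weak equivalence; and 2-out-of-3 applied to $\beta = h \circ (Y \to P)$ concludes.

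Step two applies this partial saturation to the natural transformation of cofibre sequences whose top row is $Y \ito C_f \to \Sigma X$ (obtained by pushing out $X \ito CX \to \Sigma X$ along $f$, with $C_f = Y \cup_X CX$) and whose bottom row is $Y \ito CY \to \Sigma Y$, with vertical maps $\mathrm{id}_Y$, the canonical $C_f \to CY$ arising from functoriality of the cone, and $\Sigma f$. The outer verticals are weak equivalences, so $C_f \to CY$ is a weak equivalence by step one, whence $C_f$ is acyclic (as $CY \weq *$). By Remark \ref{leftproper} the quotient $Y/X \weq C_f$ is acyclic, and condition (a) then forces $f$ to be a weak equivalence.

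The main obstacle is step one of the converse: condition (b) treats only maps of cofibrations under a \emph{common} object, so one must engineer the pushout $P = X' \cup_X Y$ to reduce the general two-sided situation to this one-sided one while identifying the induced quotient as precisely $\gamma$. Once that partial saturation is in hand, the functorial-cone diagram of step two closes the argument immediately.
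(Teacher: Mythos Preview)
Your proof is correct. The forward direction is essentially the paper's (both read (a) and (b) off as instances of the saturation in Proposition~\ref{cofibre}, with a cosmetically different choice of diagram for (a)).

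For the converse you take a different route. The paper establishes the implication ($\beta,\gamma$ weqs $\Rightarrow$ $\alpha$ weq) directly: condition (b) identifies the left square of Diagram~\ref{cofibre1} as a homotopy pushout, then one factors $\alpha$ into a cofibration followed by a weak equivalence, pushes out along $f$, and invokes (a) on the resulting (acyclic) quotient to conclude that $\alpha$ is a weak equivalence. This single implication already forces excisiveness by the proof of Proposition~\ref{cofibre} (cf.\ Remark~\ref{charexc}). You instead go straight for conservativity of $\Sigma$ via the mapping-cone diagram $Y\rightarrowtail C_f\to\Sigma X$ over $Y\rightarrowtail CY\to\Sigma Y$, applying (b) to $C_f\to CY$ under $Y$ and then (a) to $X\rightarrowtail Y$. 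Your Step~1 is in fact more general than Step~2 needs: since $\alpha=\mathrm{id}_Y$ there, the pushout $P$ is already $C_f$ and Step~1 collapses to a bare application of (b). What your route buys is directness---one sees exactly how (b) controls the suspension and (a) closes the argument---while the paper's route yields the stronger intermediate conclusion ($\beta,\gamma\Rightarrow\alpha$) at the cost of the factorisation-and-pushout manoeuvre.
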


\begin{proof}If the model category is excisive then condition (a) follows from Proposition \ref{cofibre} by putting $\alpha=f$ and $\beta=id_Y$ in Diagram \ref{cofibre1}, and condition (b) follows from Proposition \ref{cofibre} by putting  $\alpha=id_X$ and $\beta=h$ in Diagram \ref{cofibre1}.

For the converse, observe first in order to show that the model category is excisive, it suffices to establish implication ($\beta,\gamma$ weqs $\implies$ $\alpha$ weq) in Diagram \ref{cofibre1}, cf. the proof of Proposition \ref{cofibre}. Let us thus assume that conditions (a) and (b) are satisfied and that $\beta,\gamma$ are weak equivalences.

Since $\gamma$ is a weak equivalence, condition (b) implies that the left square in Diagram \ref{cofibre1} is a homotopy pushout because the comparison map $X'\cup_XY\to Y'$ is a weak equivalence.  This homotopy pushout factors into two commuting squares by factoring $\alpha$ into a cofibration $\tilde{\alpha}:A\ito \tilde{A}$ followed by a weak equivalence $\tilde{A}\to A'$ and taking the pushout of $\tilde{\alpha}$ along the cofibration $f$. The weak equivalence $\beta$ factors then through the resulting cofibration $\tilde{\beta}:B\ito\tilde{B}$ via a weak equivalence $\tilde{B}\to B'$. Since $\beta$ is a weak equivalence, the $2$-out-of-$3$ property of weak equivalences implies that $\tilde{\beta}$ is a acyclic cofibration. Therefore, the induced weak equivalence on quotients $A'/A\to B'/B$ implies that $A'/A$ is acyclic, whence $\tilde{\alpha}$ is a acyclic cofibration by property (a) and $\alpha$ is a weak equivalence, as required.\end{proof}

\begin{rem}\label{charexc}The three implications in Proposition \ref{cofibre} play different roles: while implication ($\alpha,\beta$ weqs $\implies$ $\gamma$ weq) holds in any left proper pointed model category, implication ($\alpha,\gamma$ weqs $\implies$ $\beta$ weq) amounts to condition \ref{cofibrecor}(b), and implication ($\beta,\gamma$ weqs $\implies$ $\alpha$ weq) alone already implies that the model category is excisive.

Each pointed model category defines a \emph{Waldhausen category} \cite{Wald}, cf. Remark \ref{Brown}. By Remark \ref{leftproper} and Proposition \ref{cofibre}, for a left proper pointed model category to be excisive, is a property of the associated Waldhausen category. Implication ($\alpha,\gamma$ weq $\implies$ $\beta$ weq) holds, resp. condition \ref{cofibrecor}(b) is satisfied, if and only if the associated Waldhausen category satisfies the so-called \emph{extension property}, cf. \cite{Wald}.\end{rem}

\begin{prop}\label{propexc}In a left proper pointed model category fulfilling condition \ref{cofibrecor}(b) (e.g. any excisive model category) a commutative square is a homotopy pushout if and only if the induced map on parallel homotopy cofibres is a weak equivalence.\end{prop}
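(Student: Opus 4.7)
The plan is to reduce the equivalence to a direct application of condition \ref{cofibrecor}(b). Given a commutative square on vertices $A,B,C,D$ (with $A$ at the initial corner and $D$ at the terminal corner), I would first cofibrantly replace the span $B\leftarrow A\to C$ so that $A,B,C$ are cofibrant and both $A\to B$ and $A\to C$ are cofibrations. Form the strict pushout $P=B\cup_A C$; this presents the square as a homotopy pushout, hence the original square is a homotopy pushout precisely when the induced comparison map $\phi:P\to D$ is a weak equivalence. Cobase change also yields a cofibration $C\ito P$ whose quotient $P/C\cong B/A$ is a model for the horizontal homotopy cofibre of $A\to B$.

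Next I would factor $\phi$ as a cofibration $h:P\ito P'$ followed by a weak equivalence $P'\weq D$. The composite $C\ito P\ito P'$ is then a cofibration, and the factorisation $C\ito P'\weq D$ exhibits $P'/C$ as a model for the homotopy cofibre of $C\to D$ in the sense of Remark \ref{leftproper}. The map on parallel horizontal homotopy cofibres induced by the original square is represented by the quotient map $h/C:P/C\to P'/C$ of the cofibrations $C\ito P$ and $C\ito P'$ under $C$.

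The conclusion follows in two lines. Condition \ref{cofibrecor}(b) applied to $h$ yields that $h$ is a weak equivalence iff $h/C$ is; and two-out-of-three applied to $\phi=(P'\weq D)\circ h$ yields that $h$ is a weak equivalence iff $\phi$ is. Chaining these equivalences: the square is a homotopy pushout iff $\phi$ is a weak equivalence iff the induced map on horizontal homotopy cofibres is a weak equivalence; the vertical case follows by symmetry. I do not anticipate any serious obstacle: the only care required is to verify that $P'/C$ really models the homotopy cofibre of $C\to D$ and that $h/C$ represents the natural comparison on homotopy cofibres, both of which are standard consequences of left properness.
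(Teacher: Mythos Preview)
Your proposal is correct and follows essentially the same line as the paper's proof: both reduce the claim to an application of condition \ref{cofibrecor}(b) to the comparison map under $C$. The only cosmetic difference is that the paper replaces both \emph{horizontal} maps $A\to B$ and $C\to D$ by cofibrations, so that condition \ref{cofibrecor}(b) applies directly to the comparison map $C\cup_A B\to D$ (since $C\ito C\cup_A B$ and $C\ito D$ are then both cofibrations), whereas you replace the \emph{span} and then factor $\phi$ to obtain the second cofibration under $C$; this costs you one extra factorisation step but is otherwise the same argument.
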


\begin{proof}In a left proper model category we can replace a commutative square$$
 \xymatrix @!0 @C=2cm @R=2cm{\relax
    A \ar[r]^f \ar[d]_\alpha & B \ar[d]^\beta \\
    C \ar[r]_g  & D \\
  }
$$with a weakly equivalent one in which $f$ and $g$ are cofibrations. It suffices thus to show that if $f$ and $g$ are cofibrations then the square is a homotopy pushout precisely when the induced map on quotients $B/A\to D/C$ is a weak equivalence. This quotient map factors as an isomorphism $B/A\cong (C\cup_AB)/C$ followed by a quotient map $(C\cup_AB)/C\to D/C$ under $C$. By means of condition \ref{cofibrecor}(b), the latter is a weak equivalence precisely when the comparison map $C\cup_AB\to D$ of the given square is a weak equivalence, which in left proper model categories is equivalent to the square being a homotopy pushout.\end{proof}

\begin{prop}\label{excisivereflection}Let $F:\Ee\to\Ee'$ be a functor between pointed model categories preserving finite colimits, cofibrations and weak equivalences.

\begin{itemize}\item[(a)]If $\Ee'$ is excisive and $F$ reflects weak equivalences, then $\Ee$ is excisive;\item[(b)]If in addition $\Ee'$ is stably excisive and $F$ is the left adjoint part of a Quillen adjunction with weakly invertible homotopy units, then $\Ee$ is stably excisive.\end{itemize}\end{prop}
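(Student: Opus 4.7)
The plan for part~(a) is to transfer left properness and conservativeness of the derived suspension from $\Ee'$ to $\Ee$ along $F$. For left properness, a pushout in $\Ee$ of a weak equivalence along a cofibration is sent by $F$ (which preserves finite colimits, cofibrations and weak equivalences) to such a pushout in $\Ee'$; left properness of $\Ee'$ yields a weak equivalence on the opposite side of the $F$-image, which reflects back to $\Ee$ because $F$ reflects weak equivalences.

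Next I would verify that $F$ commutes with suspension up to natural weak equivalence. Because $F$ preserves finite colimits between pointed categories, it preserves the zero object, so the factorisation $X\ito CX\weq\ast$ maps to $F(X)\ito F(CX)\weq\ast$, displaying $F(CX)$ as a cone on $F(X)$ in $\Ee'$. Preservation of pushouts identifies $F(\Sigma X)$ with $F(CX)/F(X)$, which by Remark~\ref{leftproper} applied in the left proper $\Ee'$ is a homotopy cofibre of $F(X)\to\ast$, hence naturally weakly equivalent to $\Sigma F(X)=CF(X)/F(X)$. If $\Sigma\phi$ is a weak equivalence in $\Ee$, then $F(\Sigma\phi)\simeq\Sigma F(\phi)$ is a weak equivalence in $\Ee'$; excisiveness of $\Ee'$ makes $F(\phi)$ a weak equivalence, and reflection by $F$ makes $\phi$ a weak equivalence. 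By Remark~\ref{leftproper} this yields conservativeness of the derived suspension of $\Ee$, proving~(a).

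For (b), $\Ee$ is already excisive by (a), so it suffices to upgrade the derived suspension from conservative to fully faithful. Weak invertibility of the homotopy unit of the Quillen adjunction $(F,G)$ is precisely the statement that $LF\colon\Ho(\Ee)\to\Ho(\Ee')$ is fully faithful, and the natural weak equivalence $F(\Sigma X)\simeq\Sigma F(X)$ from~(a) gives an intertwining $LF\circ\Sigma\cong\Sigma\circ LF$ of derived functors. Combining full faithfulness of $LF$ with full faithfulness of $\Sigma$ on $\Ho(\Ee')$ (from the stably excisive hypothesis on $\Ee'$), one obtains, for any $Z,X\in\Ho(\Ee)$, the chain of natural bijections
\begin{align*}
\Ho(\Ee)(Z,X)
&\cong\Ho(\Ee')(LF Z,LF X)\\
&\cong\Ho(\Ee')(\Sigma\, LF Z,\,\Sigma\, LF X)\\
&\cong\Ho(\Ee')(LF\,\Sigma Z,\,LF\,\Sigma X)\\
&\cong\Ho(\Ee)(\Sigma Z,\Sigma X),
\end{align*}
whose composite is the action of $\Sigma$ on morphisms; this establishes full faithfulness of the derived suspension in $\Ee$, i.e.\ that $\Ee$ is stably excisive.

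The main obstacle is ensuring that the comparison $F(\Sigma X)\weq\Sigma F(X)$ in~(a) is genuinely natural, so that it descends to a natural isomorphism $LF\circ\Sigma\cong\Sigma\circ LF$ of derived functors in~(b); once this is in place both parts reduce to straightforward bookkeeping with hypotheses that preserve or reflect weak equivalences.
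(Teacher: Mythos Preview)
Your proof is correct and follows essentially the same line as the paper: both transfer left properness via reflection of weak equivalences, establish the commutation $LF\circ L\Sigma_\Ee\cong L\Sigma_{\Ee'}\circ LF$, and then deduce conservativity (resp.\ full faithfulness) of $L\Sigma_\Ee$ from that of $LF$ and $L\Sigma_{\Ee'}$. You are simply more explicit than the paper about why the comparison $F(\Sigma X)\simeq\Sigma F(X)$ holds and how the full-faithfulness chain in~(b) unwinds, but the underlying argument is the same.
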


\begin{proof}Left properness of $\Ee$ is inherited from $\Ee'$ since $F$ preserves cofibrations, weak equivalences and pushouts, and moreover reflects weak equivalences. Hypothesis (a) (resp. (b)) implies that the left derived functor $LF:\Ho(\Ee)\to\Ho(\Ee')$ is conservative (resp. fully faithful) and the left derived functor $L\Sigma_{\Ee'}:\Ho(\Ee')\to\Ho(\Ee')$ is also conservative (resp. fully faithful). It follows then from the commutation of left derived functors $L\Sigma_{\Ee'}\circ LF=LF\circ L\Sigma_\Ee$ that $L\Sigma_\Ee$ is conservative (resp. fully faithful) as well, i.e. $\Ee$ is an excisive (resp. stably excisive) model category.\end{proof}

Goodwillie \cite{Good} defines a functor to be excisive if the functor takes homotopy pushouts to homotopy pullbacks. Condition \ref{Goodwillie}(a) below expresses thus that the identity functor of the model category is excisive in Goodwillie's sense.

\begin{prop}\label{Goodwillie}For a left proper pointed model category the following conditions are equivalent:
\begin{itemize}\item[(a)]every homotopy pushout is a homotopy pullback;\item[(b)]every cofibre sequence is a homotopy fibre sequence.\end{itemize}Either of these conditions implies that the model category is stably excisive.\end{prop}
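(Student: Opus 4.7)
The plan is to establish (a) $\Rightarrow$ (b), (b) $\Rightarrow$ (a), and that either condition forces stable excisiveness. The first implication is immediate: given a cofibre sequence $X \ito Y \to Y/X$, the square
$$\xymatrix @!0 @C=2cm @R=1.5cm{X \ar@{ >->}[r] \ar[d] & Y \ar[d] \\ * \ar[r] & Y/X}$$
is a homotopy pushout by left properness (Remark \ref{leftproper}). If (a) holds, it is also a homotopy pullback, which is precisely the statement that $X \to Y \to Y/X$ is a homotopy fibre sequence.

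For (b) $\Rightarrow$ (a), I would reduce via left properness to checking that every pushout along a cofibration
$$\xymatrix @!0 @C=2cm @R=1.5cm{A \ar@{ >->}[r]^{f} \ar[d] & B \ar[d] \\ C \ar@{ >->}[r] & D}$$
is a homotopy pullback. The two horizontal cofibre sequences share the quotient $B/A \cong D/C =: Q$. Applying (b) to both rows yields $A \simeq \mathrm{hofib}(B \to Q)$ and $C \simeq \mathrm{hofib}(D \to Q)$, whereupon pasting of homotopy pullbacks gives
$$B \times^{h}_{D} C \;\simeq\; B \times^{h}_{D} \mathrm{hofib}(D \to Q) \;\simeq\; \mathrm{hofib}(B \to Q) \;\simeq\; A,$$
so that the comparison map $A \to B \times^{h}_{D} C$ is a weak equivalence.

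Finally, to deduce stable excisiveness it remains to show that the derived suspension functor is fully faithful. For a cofibrant $X$, the homotopy pushout defining the suspension
$$\xymatrix @!0 @C=2cm @R=1.5cm{X \ar@{ >->}[r] \ar[d] & CX \ar[d] \\ * \ar[r] & \Sigma X}$$
is, by (a), also a homotopy pullback. Since $CX \simeq *$, this pullback computes $\Omega\Sigma X$, so the unit $X \to \Omega\Sigma X$ is a weak equivalence. Combined with the standing assumptions of pointedness and left properness, this shows the model category is stably excisive.

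The main technical point is the pullback-pasting step: to make it rigorous one replaces $Q$ by a fibrant model and factors $B \to Q$ and $D \to Q$ as cofibrations followed by acyclic fibrations (or, dually, uses fibrant factorisations), so that the homotopy pullbacks become strict pullbacks and the pasting lemma for ordinary pullbacks applies. The other two steps are essentially formal manipulations of the definitions of cofibre and homotopy fibre sequences.
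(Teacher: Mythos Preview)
Your proof is correct and follows essentially the same strategy as the paper. For (b)$\Rightarrow$(a) the paper packages the pasting argument as a cube---front and back faces are the mapping-cone squares (homotopy pullbacks by (b)), the bottom face is a homotopy pullback because the induced map on homotopy cofibres is a weak equivalence, whence the top face is a homotopy pullback---whereas you paste two squares in the plane over the common quotient $Q=B/A\cong D/C$; both arguments amount to applying (b) to the two parallel cofibre sequences and then invoking the pasting lemma, and your deduction of stable excisiveness from the cofibre sequence $X\ito CX\to\Sigma X$ is identical to the paper's.
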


\begin{proof}Clearly (a) implies (b). Conversely, assume that (b) holds and consider the following commutative cube
$$
 \xymatrix {
    & X'  \ar@{ >->}[rr]^{f'} \ar@{ >-}[d] && Y' \ar@{ >->}[dd] \\
    X \ar[ru] \ar@{ >->}[rr]^f\ar@{ >->}[dd]  &\ar[d]& Y \ar[ru] \ar@{ >->}[dd] \\
    & CX' \ar@{-}[r]  &\ar[r]& C_{f'} \\
     CX \ar[ru]^\sim \ar[rr] && C_f \ar@{->}[ru]_{\sim} \\
  }
$$in which we assume that the top face is a homotopy pushout with a pair of cofibrations $f,f'$. By left properness we can always reduce to this case. The induced map on the homotopy cofibres is then a weak equivalence so that the bottom face is a homotopy pullback. Front and back faces are weakly equivalent to pushout squares of cofibre sequences and hence homotopy pullbacks by assumption. It follows that the top face is a homotopy pullback as well, whence (a).

Applying (b) to the cofibre sequence $X\ito CX\to\Sigma X$ yields a weak equivalence $X\to\Omega(\Sigma X)_f$ for each object $X$, whence the model category is stably excisive.\end{proof}

\begin{rem}Proposition \ref{Goodwillie} admits a partial converse due to an embedding theorem of Schwede \cite[Section 2.2]{SchSMCA}: every \emph{cofibrantly generated} stably excisive model category $\Ee$ embeds into its category of spectra $\Sp(\Ee)$ by a left Quillen functor $\Sigma^\infty$ in such a way that the homotopy units of the Quillen adjunction $\Sigma^\infty:\Ee\lrto\Sp(\Ee):\Omega^\infty$ are weakly invertible. Since $\Sp(\Ee)$ is a stable model category, this readily implies that every homotopy pushout in $\Ee$ is a homotopy pullback, i.e. the identity functor of $\Ee$ is excisive in Goodwillie's sense \cite{Good}.

Up to cofibrant generation our notion of stably excisive model category coincides thus with Goodwillie's notion of excisive identity functor.\end{rem}

\begin{rem}\label{crosseffect}The preceding remark helps to understand the difference between ``excisive'' and ``stably excisive'' model categories. Indeed, any excisive functor has ``vanishing cross-effects'', which in the case of an excisive identity functor means that the canonical map $X\vee Y\to X\tilde{\times} Y$ into the derived product is a weak equivalence. This is certainly not true for the excisive model category of based topological spaces, localised with respect to a generalised homology theory $E$, because homological excision yields here canonical isomorphisms $E_\bullet(X\times Y,X\vee Y)\cong E_\bullet(X\wedge Y,*)$.\end{rem}

\section{Homotopical Gabriel-Morita theory}\label{Mth}

In this central section we give sufficient conditions for a strong monad $T$ acting on a pointed monoidal model category $\Ee$ to the effect that the linear approximation of Section \ref{CEbSCEC} induces a Quillen equivalence $\lambda_!:\Mod_{T(I)}\lrto\Ee^T:\lambda^*$.

In a first ``crude'' version (cf. Theorem \ref{thMain}) we show that this is essentially the case if the strength of $T$ is weakly invertible and the forgetful functor $\Ee^T\to\Ee$ takes free cell attachments of $T$-algebras to homotopical cell attachments in $\Ee$.

In a second more useful version (cf. Theorem \ref{mainexcisive}) we show that if $\Ee$ is an excisive model category in the sense of Section \ref{HMT} then the condition on the forgetful functor can be replaced with a condition on the monad itself, namely that $T$ should take cofibre sequences to homotopy cofibre sequences. This second version relies on a careful homotopical analysis of the so-called bar resolution of a $T$-algebra. The more technical aspects of this analysis have been deferred to Section \ref{taubar}.

\subsection{Cell extensions and cell attachments}\label{SCgMc}

To ease and shorten terminology, a cofibration between cofibrant objects will be called a \emph{cell extension} throughout.

\begin{lem}[Gluing Lemma]\label{lemPatch}
Consider a commutative cube in a model category
$$
 \xymatrix {
    & X'  \ar[rr] \ar@{ >-}[d] && T' \ar@{ >->}[dd] \\
    X \ar[ru]^{\sim} \ar[rr]\ar@{ >->}[dd]  &\ar[d]& T \ar[ru]_{\sim} \ar@{ >->}[dd] \\
    & Y' \ar@{-}[r]  &\ar[r]& Z' \\
     Y \ar[ru]^{\sim} \ar[rr] && Z \ar@{.>}[ru]_{\sim} \\
  }
$$
such that front and back squares are pushouts, $X \ito Y$ and $X'\ito Y'$ are cell extensions and the three backward oriented arrows $X\weq X'$, $Y\weq Y'$, $T\weq T'$ are weak equivalences between cofibrant objects. Then the fourth backward oriented arrow $Z\rightarrow Z'$ is also a weak equivalence between cofibrant objects.\end{lem}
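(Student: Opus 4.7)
The cofibrancy of $Z$ and $Z'$ is immediate: since $X \ito Y$ is a cofibration from a cofibrant object, its pushout $T \ito Z$ along $X \to T$ is again a cofibration between cofibrant objects, and analogously for $T' \ito Z'$. The content of the lemma is therefore that $Z \to Z'$ is a weak equivalence; this is the classical Gluing Lemma (cf.\ \cite{Hi}, Prop.~13.5.4). My plan proceeds in two steps: first prove the statement under the stronger hypothesis that the three vertical maps $X \to X'$, $Y \to Y'$, $T \to T'$ are trivial cofibrations, and then reduce the general case to this special case by a factorisation argument.

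For the special case, decompose the induced map on pushouts as
\[
Z = Y \cup_X T \ \to\ Y' \cup_X T \ \to\ Y' \cup_X T' \ \to\ Y' \cup_{X'} T' = Z'.
\]
The first two maps are pushouts of the trivial cofibrations $Y \ito Y'$ and $T \ito T'$, hence themselves trivial cofibrations. By pushout pasting $Y' \cup_X T' \cong Y' \cup_{X'}(X' \cup_X T')$, so the third map is induced by the canonical projection $p : X' \cup_X T' \to T'$. Now $p$ is a retraction of the coprojection $T' \ito X' \cup_X T'$, which is itself a trivial cofibration as the pushout of $X \ito X'$. Pushing out the pair $(T' \ito X' \cup_X T',\, p)$ along the coprojection $X' \cup_X T' \to Y' \cup_{X'}(X' \cup_X T')$ shows that the induced map $Y' \cup_{X'}(X' \cup_X T') \to Y' \cup_{X'} T'$ admits the pushed-out trivial cofibration $Y' \cup_{X'} T' \ito Y' \cup_{X'}(X' \cup_X T')$ as a section, and is therefore a weak equivalence by 2-out-of-3. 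The composite $Z \to Z'$ is thus a weak equivalence.

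For the reduction, I would factor the weak equivalence $X \weq X'$ as a trivial cofibration $X \ito \bar X$ followed by a weak equivalence $\bar X \weq X'$. Forming the pushouts $\bar Y := Y \cup_X \bar X$ and $\bar T := T \cup_X \bar X$ yields trivial cofibrations $Y \ito \bar Y$ and $T \ito \bar T$ by stability of trivial cofibrations under pushout, and 2-out-of-3 shows that the induced maps $\bar Y \weq Y'$, $\bar T \weq T'$ are weak equivalences. Applying the special case to the cube $(X,Y,T,Z) \to (\bar X,\bar Y,\bar T,\bar Z)$, whose vertical maps are now trivial cofibrations, yields $Z \weq \bar Z$. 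The main obstacle will be to then show $\bar Z \weq Z'$: the comparison $\bar X \weq X'$ is still only a weak equivalence, so naive iteration of the factorisation does not terminate. The cleanest resolution is the two-sided mapping cylinder trick: factoring $(f,\,\mathrm{id}_{X'}) : X \sqcup X' \to X'$ as a cofibration $X \sqcup X' \ito \tilde X$ followed by a trivial fibration $\tilde X \weq X'$ produces a zig-zag $X \ito \tilde X \otl X'$ of trivial cofibrations between cofibrant objects, relative to which the special case applies on both sides and the conclusion follows by 2-out-of-3. Equivalently, one may invoke Ken Brown's lemma for the pushout functor on the category of span-shaped diagrams equipped with a suitable Reedy or projective model structure.
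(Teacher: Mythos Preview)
Your overall strategy---reduce to the case of trivial cofibrations and then argue directly---is one of the two approaches the paper itself mentions in Remark~\ref{Brown} (the other being the Reedy argument you allude to at the end). The paper gives no details, so a careful direct argument would be welcome. However, your treatment of the third map in the special case has a genuine gap.

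You write the third map as $Y'\cup_{X'}(X'\cup_X T')\to Y'\cup_{X'}T'$, i.e.\ as $Y'\cup_{X'}(-)$ applied to the retraction $p:X'\cup_X T'\to T'$, and then claim that the section $i:T'\ito X'\cup_X T'$ of $p$ can likewise be pushed out to give a trivial-cofibration section of the third map. But $i$ is \emph{not} a morphism under $X'$: the composite $X'\to T'\xrightarrow{\,i\,}X'\cup_X T'$ lands in the pushout via the $T'$-coprojection, whereas the structure map needed for $Y'\cup_{X'}(-)$ is the $X'$-coprojection; these two maps $X'\to X'\cup_X T'$ agree only after restriction to $X$. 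Hence $Y'\cup_{X'}i$ is not defined, and the claimed section does not exist. Equally, Ken Brown for $Y'\cup_{X'}(-)$ does not apply to $p$ directly, since neither $X'\to X'\cup_X T'$ (a pushout of $X\to T'$) nor $X'\to T'$ is assumed to be a cofibration.

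The fix is to use the one hypothesis your third-map argument never touches, namely that $X\ito Y$ and $X'\ito Y'$ are cofibrations. By pushout pasting one has $Y'\cup_X T'\cong (X'\cup_X Y)\cup_{X'}T'$, so the third map is $(-)\cup_{X'}T'$ applied to $X'\cup_X Y\to Y'$. This last map is a weak equivalence by $2$-out-of-$3$ (both $Y\ito X'\cup_X Y$ and $Y\ito Y'$ are trivial cofibrations), and it lies between objects cofibrant in the coslice $X'/\Ee$: the structure map $X'\ito X'\cup_X Y$ is the pushout of the cofibration $X\ito Y$, and $X'\ito Y'$ is a cofibration by hypothesis. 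Now Ken Brown's lemma for the functor $(-)\cup_{X'}T':X'/\Ee\to\Ee$ yields the conclusion. Your reduction step via the two-sided mapping cylinder is standard and works once the special case is correctly in hand.
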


\begin{rem}\label{Brown}There are (at least) two proofs of the Gluing Lemma. The first uses the technique of Reedy categories (cf. \cite{Hi,HovMC}) and establishes the Gluing Lemma as a special instance of the fact that a weak equivalence between Reedy cofibrant diagrams with values in a model category induces a weak equivalence between their colimits. The second proof establishes the Gluing Lemma for any category of cofibrant objects in the sense of Kenneth Brown \cite{Brown} by a direct diagram chase.

A \emph{category of cofibrant objects} is a category equipped with an initial object, wide subcategories of cofibrations and weak equivalences fulfilling the following axioms\begin{enumerate}\item[(1)] Every object is cofibrant;\item[(2)] Weak equivalences have the $2$-out-of-$3$ property;\item[(3a)] Cofibrations are closed under cobase-change;\item[(3b)]Acyclic cofibrations are closed under cobase-change;\item[(4)]Every map factors as a cofibration followed by a weak equivalence.\end{enumerate}

If we assume axioms (1), (2), (3a) and (4) of a category of cofibrant objects then axiom (3b) is actually equivalent to Gluing Lemma \ref{lemPatch}. Indeed, one first shows that in virtue of Brown's Lemma, (3b) implies that factorisation (4) is stable under cobase-change along cofibrations. This implies the Gluing Lemma by a well-known cubical diagram chase. Conversely, (3b) is a special case of the Gluing Lemma.

A \emph{Waldhausen category} is a pointed category of cofibrant objects in which only axioms (1), (2), (3a) and the Gluing Lemma \ref{lemPatch} hold. Waldhausen \cite{Wald} treats axiom (4) separately and calls a functorial version of it the \emph{cylinder axiom}.\end{rem}

\begin{defn}\label{defcellatt}
A commutative square
$$
 \xymatrix @!0 @C=2cm @R=2cm{\relax
    X \ar[r] \ar@{ >->}[d] & T \ar[d] \\
    Y \ar[r]  & Z \\
  }
$$
will be called a \emph{homotopical cell attachment} if all objects of the square are cofibrant, the vertical map $X \ito Y$ is a cofibration (and hence a cell extension) and the comparison map $Y\cup_X T\to Z$ is a weak equivalence between cofibrant objects.
\end{defn}
\begin{rem}
If the comparison map is an isomorphism, we say that the pushout square is a \emph{cell attachment}. Observe that in a cell attachment the cofibrancy of the fourth object $Z$ is automatic, i.e. a property, while in a homotopical cell attachment it is part of the structure. The cofibrancy of the fourth object ensures that a homotopical cell attachment is a \emph{homotopy pushout} in the homotopical sense.
\end{rem}

\begin{lem}\label{lemReedGen}
The Gluing Lemma \ref{lemPatch} remains true if front and back squares of the cube are homotopical cell attachments.
\end{lem}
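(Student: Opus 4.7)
The plan is to reduce this extended Gluing Lemma to its strict pushout version (Lemma \ref{lemPatch}) by intercalating the actual pushouts of the front and back squares. Specifically, I would set $W = Y \cup_X T$ and $W' = Y' \cup_{X'} T'$; both are cofibrant because they are cobase changes of cofibrations between cofibrant objects. The definition of homotopical cell attachment gives weak equivalences $w : W \weq Z$ and $w' : W' \weq Z'$, and the universal property of the pushouts produces a canonical comparison map $k : W \to W'$ compatible with the backward arrows $X \weq X'$, $Y \weq Y'$ and $T \weq T'$. Moreover, the commutativity of the given cube forces $w'\circ k$ to agree with the composite $W \xrightarrow{w} Z \to Z'$.

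Next, I would apply Lemma \ref{lemPatch} to the auxiliary cube obtained by replacing $Z$ and $Z'$ with the strict pushouts $W$ and $W'$. Its front and back faces are now honest pushouts, the cell extensions $X \ito Y$ and $X' \ito Y'$ are unchanged, and the three backward oriented weak equivalences $X \weq X'$, $Y \weq Y'$, $T \weq T'$ are inherited verbatim from the original cube. The conclusion of Lemma \ref{lemPatch} then yields that $k : W \to W'$ is a weak equivalence between cofibrant objects.

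To finish, I would consider the commutative square whose top edge is $k$, whose vertical edges are $w$ and $w'$, and whose bottom edge is the given map $Z \to Z'$. Three of its four edges are now known to be weak equivalences, so two-out-of-three applied to the equality $w'\circ k = (Z \to Z') \circ w$ forces $Z \to Z'$ to be a weak equivalence as well. The cofibrancy of $Z$ and $Z'$ is built into the definition of homotopical cell attachment, so nothing further needs to be checked.

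I do not anticipate any serious obstacle: the argument is essentially a formal two-out-of-three chase on top of the pushout version of the Gluing Lemma. The only piece of bookkeeping worth performing carefully is the compatibility check that $w'\circ k$ and $(Z \to Z')\circ w$ genuinely coincide, which follows immediately from the commutativity of the original cube together with the universal property of the pushouts defining $W$ and $W'$.
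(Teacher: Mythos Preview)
Your proposal is correct and is precisely the argument the paper has in mind: the paper's proof is the single line ``This follows from the 2-out-of-3 property of weak equivalences,'' which tacitly presupposes exactly the reduction you spell out (form the strict pushouts $W,W'$, invoke Lemma \ref{lemPatch} to get $k:W\weq W'$, then apply 2-out-of-3 against the comparison maps $w,w'$). You have simply made explicit what the paper leaves to the reader.
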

\begin{proof}
This follows from the 2-out-of-3 property of weak equivalences.
\end{proof}

\subsection{Free cell extensions and free cell attachments}\label{freeattach}--\vspace{1ex}

For a monad $T$ on a model category $\Ee$ permitting a transfer of the model structure to $\Ee^T$, a map of $T$-algebras $V\ito W$ will be called a \emph{free cell extension} if $V$ is a cofibrant $T$-algebra and $V\ito W$ is part of the following pushout square in $\Ee^T$
 $$
 \xymatrix @!0 @C=2cm @R=2cm{\relax
    F_T(X) \ar[r] \ar@{ >->}[d] & V \ar@{ >->}[d] \\
    F_T(Y) \ar[r]  & W \\
  }
$$where $X\ito Y$ is a cell extension in $\Ee$. Such a pushout square will be called a \emph{free cell attachment}. If $\Ee$ is pointed and $T$ preserves the null-object, the induced sequence $V\ito W\to W/V$ will be called a \emph{free cofibre sequence}. Notice that the quotient $W/V$ of this free cofibre sequence is isomorphic to the free $T$-algebra $F_T(Y/X)$.

\subsection{Monoidal model categories and tractability}\label{SMmc}--\vspace{1ex}

Recall that a \emph{monoidal model category} \cite{HovMC} is a closed symmetric monoidal category $(\Ee,\otimes,I)$ equipped with a compatible Quillen model structure. This compatibility is expressed by the \emph{pushout-product} and \emph{unit} axioms. The pushout-product axiom requires the monoidal structure $-\otimes-:\Ee\times\Ee\to\Ee$ to be a \emph{left Quillen bifunctor} \cite{HovMC}. The unit axiom is redundant if the monoidal unit is cofibrant.

We shall say that a monoidal model category is \emph{tractable} if the monoidal unit is cofibrant and the underlying model category is \emph{cofibrantly generated} admitting a generating set of cell extensions for its cofibrations, cf. Barwick \cite{BarLRModC}.

A monoid $(M,m,e)$ in $\Ee$ is \emph{well-pointed} if its unit map $e:I\to M$ is a cofibration in $\Ee$. Tensoring with a well-pointed monoid $M$ preserves (acyclic) cofibrations.

\begin{lem}[\cite{BMDCAO}, Proposition 2.7]\label{propmodmod}
In any cofibrantly generated monoidal model category the category of right modules over a well-pointed monoid carries a transferred model structure.\end{lem}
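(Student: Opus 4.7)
The plan is to apply Kan's transfer principle to the free-forgetful adjunction $F_M = -\otimes M : \Ee \lrto \Mod_M : U$, declaring a morphism $f$ of right $M$-modules to be a weak equivalence (respectively fibration) whenever $U(f)$ is one in $\Ee$, and taking the images $F_M(I)$ and $F_M(J)$ of the generating (acyclic) cofibrations of $\Ee$ as generating (acyclic) cofibrations of $\Mod_M$. The category $\Mod_M$ is complete and cocomplete because the monad $-\otimes M$ preserves colimits (by closedness of $\Ee$); in particular $U$ creates both limits and filtered colimits, so the smallness hypotheses for $F_M(I)$ and $F_M(J)$ transfer from those of $I$ and $J$ in $\Ee$.

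The central point is to verify that every relative $F_M(J)$-cell complex has underlying morphism a weak equivalence in $\Ee$ (indeed, an acyclic cofibration). Two facts suffice. First, the pushout in $\Mod_M$ of a free map $F_M(j) : X\otimes M \to Y\otimes M$ along an arbitrary module map $X\otimes M \to V$ is computed in $\Ee$ as the ordinary pushout $V\cup_{X\otimes M}(Y\otimes M)$; this is because $-\otimes M$ preserves colimits, so the right $M$-action extends canonically from $V$ and $Y\otimes M$ to this pushout. Second, by well-pointedness (recalled just before the lemma), tensoring with $M$ preserves acyclic cofibrations, so $j\otimes M$ is an acyclic cofibration in $\Ee$ whenever $j\in J$. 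Combining both facts, each stage of an $F_M(J)$-cell complex becomes, after applying $U$, a pushout of an acyclic cofibration, and the transfinite composition of such pushouts remains an acyclic cofibration in $\Ee$, hence a weak equivalence.

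The main subtlety is the first of the two facts above, namely that pushouts along \emph{free} module maps are computed in the underlying category $\Ee$. This is not true for arbitrary pushouts in $\Mod_M$, and it is precisely why the generating sets must land in the full subcategory of free modules for the transfer principle to apply. Once this observation is in hand, Kan's transfer criterion immediately yields the desired cofibrantly generated model structure on $\Mod_M$ with transferred fibrations and weak equivalences.
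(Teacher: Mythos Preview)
The paper does not prove this lemma; it merely cites it as Proposition 2.7 of \cite{BMDCAO}. Your argument via Kan's transfer principle is the standard one and is essentially what appears in the cited reference.

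One correction: your final paragraph misidentifies the ``main subtlety''. You assert that the forgetful functor $U:\Mod_M\to\Ee$ preserves pushouts along \emph{free} module maps but that ``this is not true for arbitrary pushouts in $\Mod_M$''. In fact $U$ creates \emph{all} colimits. The monad $-\otimes M$ is cocontinuous (it has right adjoint $\uEe(M,-)$ by closedness of $\Ee$), and whenever a monad preserves a class of colimits the associated forgetful functor creates those colimits. Hence every colimit in $\Mod_M$, and in particular every pushout, is computed in $\Ee$. Your verification of the acyclicity condition therefore goes through for a simpler and more general reason than you state; the restriction to free maps is not needed, nor is it the reason the generating sets consist of free maps (they are free simply because they are the $F_M$-images of the generating sets of $\Ee$, as required by the transfer theorem).
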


\begin{thm}\label{thMain}
Let $\left(T,\mu,\eta, \sg \right)$ be a strong monad on a tractable monoidal model category $(\Ee,\otimes,I)$ such that
\begin{enumerate}
 \item[(a)]the category of T-algebras admits a transferred model structure;
 \item[(b)]the unit $\eta_I:I\to T(I)$ at the monoidal unit is a cofibration;
 \item[(c)]the strength $\sg_{X,Y}: X\otimes T(Y) \rightarrow T(X \otimes Y)$ is a weak equivalence for all cofibrant objects $X,Y$ in $\Ee$;
 \item[(d)]the forgetful functor $U_T:\Ee^T\to\Ee$ takes free cell attachments in $\Ee^T$ to homotopical cell attachments in $\Ee$ (cf. Definition \ref{defcellatt} and Section \ref{freeattach}).
\end{enumerate}

Then the linear approximation $\lambda: -\otimes T(I)\rightarrow T$ induces a Quillen equivalence $(\lambda_!,\lambda^*)$ between the category of right $T(I)$-modules and the category of $T$-algebras and hence an adjoint equivalence of the corresponding homotopy categories
$$L\lambda_!:\Ho(\Mod_{T(I)})\simeq\Ho(\Ee^T):R\lambda^*.$$
\end{thm}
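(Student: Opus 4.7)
The plan is to show that $(\lambda_!,\lambda^*)$ is a Quillen equivalence by checking the adjunction-unit condition on cofibrant $T(I)$-modules, exploiting that $\lambda^*$ commutes with the forgetful functors to $\Ee$. Indeed, $\lambda^*$ is restriction along the strong monad morphism $\lambda:-\otimes T(I)\to T$ of Proposition \ref{propMorStMon}, so it preserves underlying objects in $\Ee$; since both model structures are transferred from $\Ee$, this implies that $\lambda^*$ preserves and reflects both fibrations and weak equivalences. In particular $(\lambda_!,\lambda^*)$ is automatically a Quillen adjunction, and by the standard criterion it will be a Quillen equivalence as soon as the unit $V\to\lambda^*\lambda_!(V)$ is a weak equivalence in $\Mod_{T(I)}$ for every cofibrant $V$.

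Cofibrant $T(I)$-modules are retracts of cell complexes, and weak equivalences are closed under retracts and transfinite compositions along cofibrations, so the strategy is to prove the unit condition by induction on free cell attachments. The base case is $V=0$: the underlying map is the canonical $0\to T(0)$, which is a weak equivalence because hypothesis (c) applied to $X=Y=0$ identifies it with the strength $\sg_{0,0}:0\otimes T(0)\to T(0)$, using that $0\otimes T(0)\cong 0$ since $-\otimes T(0)$ is a left adjoint; this gives $T(0)\simeq 0$. For the inductive step, consider a free cell attachment in $\Mod_{T(I)}$ along $f\otimes T(I):X\otimes T(I)\to Y\otimes T(I)$ with $f:X\ito Y$ a cell extension in $\Ee$. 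Applying the left adjoint $\lambda_!$ produces a free cell attachment in $\Ee^T$ along $T(f):T(X)\to T(Y)$, and the monad transformation $\lambda$ itself yields a commutative cube in $\Ee$ whose back face is the former pushout and whose front face is the latter.

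The crux is to apply the Gluing Lemma \ref{lemReedGen} to this cube. The back face is an honest cell attachment in $\Ee$: the linear monad $-\otimes T(I)$ preserves all colimits, so its forgetful creates them, and by (b) together with tractability the monoid $T(I)$ is well-pointed and cofibrant, hence tensoring with $T(I)$ preserves cofibrations. The front face is a homotopical cell attachment in $\Ee$ by hypothesis (d), which moreover provides the cofibrancy of $T(X)$, $T(Y)$, $\lambda_!(V)_\Ee$ and $\lambda_!(W)_\Ee$. The three vertical comparison maps $\lambda_X$, $\lambda_Y$ and $V_\Ee\to\lambda_!(V)_\Ee$ are weak equivalences between cofibrant objects: the first two by (c) via the formula $\lambda_X=T(r_X)\circ\sg_{X,I}$ from Proposition \ref{propMorStMon}, and the third by the inductive hypothesis. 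Lemma \ref{lemReedGen} then delivers that $W_\Ee\to\lambda_!(W)_\Ee$ is a weak equivalence, closing the induction at single cell attachments; the transfinite-composition and retract steps are routine thereafter, using that $U_T$ and $U_{T(I)}$ both preserve the sequential colimits appearing in the small-object argument.

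The main obstacle is the book-keeping of cofibrancy in $\Ee$ for the various objects in the cube, which feeds back into the induction itself. In particular, that $T(X)$ is cofibrant in $\Ee$ for every cell extension $X$ is obtained by applying (d) to the degenerate free cell attachment from $T(0)\simeq 0$ along $0\ito X$, and cofibrancy in $\Ee$ of the underlying object of every cofibrant $T(I)$-module emerges from the same inductive analysis of pushouts in $\Mod_{T(I)}$. Threading these cofibrancy assertions cleanly through the recursion is the principal technical point.
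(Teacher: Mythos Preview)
Your proof is correct and follows essentially the same route as the paper's: both reduce to showing that the unit $\eta_V:V\to\lambda^*\lambda_!(V)$ is a weak equivalence for every cofibrant $T(I)$-module $V$, and establish this by cellular induction via the cube argument with the Gluing Lemma (Lemma~\ref{lemReedGen}), using hypothesis~(d) to make the $\Ee^T$-side of the cube a homotopical cell attachment and hypothesis~(c) to identify the comparison maps with the strength. The only differences are cosmetic---your cube is oriented front-to-back rather than back-to-front, and you spell out the cofibrancy bookkeeping and the base case $V=0$ a bit more explicitly than the paper does.
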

\begin{proof}By Corollary \ref{proptens}, the category $\Ee^T$ is a tensored $\Ee$-category. By Lemma \ref{propmodmod} and hypothesis $(b)$, the category of right $T(I)$-modules admits a transferred model structure, as does the category of $T$-algebras by hypothesis $(a)$. The linear approximation $\lambda:-\otimes T(I)\rightarrow T $ is a morphism of strong monads (cf. Proposition \ref{propMorStMon}) and induces thus an $\Ee$-adjunction. More precisely, the right adjoint $\lambda^*$ takes the $T$-algebra $(X,\xi_X:T(X)\rightarrow X)$ to the $T(I)$-module $(X,\xi_X\lambda_X:X\otimes T(I)\rightarrow X)$ and preserves the underlying objects. The left adjoint functor $\lambda_!$ exists since  $\Ee^{T}$ is cocomplete. The commutative diagram of right adjoint functors
$$
\xymatrix @!0 @C=3cm @R=1.8cm{\relax
\Ee^{T} \ar[rr]^{\lambda^*} \ar[rd]_{U_{T}} && Mod_{T(I)} \ar[ld]^{V_T} \\
& \Ee
}
$$induces an analogous commutative diagram of left adjoint functors, in particular $\lambda_!$ takes free $T(I)$-modules to free $T$-algebras.

Since both model structures are transferred, the right adjoint $\lambda^*$ preserves and reflects fibrations and weak equivalences, and is thus a right Quillen functor. It follows that $\lambda_!$ is a left Quillen functor and the adjoint pair $(\lambda_!,\lambda^*)$ is a Quillen pair. In order to prove that it is a Quillen equivalence it suffices now to show that for each cofibrant $T(I)$-module $M$, the unit of the adjunction $\eta_{M}: M\rightarrow \lambda^*\lambda_!(M)$ is a weak equivalence. The unit at a free module $X\otimes T(I)$ is given by the strength
$$\eta_{X\otimes T(I)}=\sg_{X,I}:X\otimes T(I)\to T(X)$$ and is therefore a weak equivalence if $X$ is cofibrant in $\Ee$, by hypothesis $(c)$ and the cofibrancy of the monoidal unit $I$. We shall now extend this property to all cofibrant $T(I)$-modules.

We first show that the property ``$\eta_Z:Z \to \lambda^*\lambda_!(Z)$ is a weak equivalence between $V_T$-cofibrant $T(I)$-modules'' is closed under cobase change of $Z$ along free $T(I)$-maps on cell extensions in $\Ee$. Let us consider the following commutative cube
$$
 \xymatrix {
    & T(X) \ar[rr] \ar@{ >-}[d]  && \lambda^*\lambda_!(Z') \ar[dd] \\
    X\otimes T(I)\ar[ru]^{\sim} \ar[rr] \ar@{ >->}[dd] &\ar[d]& Z' \ar[ru]_{\sim} \ar[dd] \\
    & T(Y) \ar@{-}[r]&\ar@{->}[r] & \lambda^*\lambda_!(Z) \\
     Y\otimes T(I) \ar[ru]^{\sim} \ar[rr] && Z \ar@{.>}[ru]_{\sim} \\
  }
$$
in which we suppose inductively that $\eta_{Z'}:Z' \to \lambda^*\lambda_!(Z')$ is a weak equivalence between $V_T$-cofibrant $T(I)$-modules and that $Z'$ is a cofibrant $T(I)$-module. Since $X$ and $Y$ are cofibrant in $\Ee$, the front square is a free cell attachment in $\Mod_{T(I)}$ in the sense of Definition \ref{defcellatt}. Since the forgetful functor $V_T:\Mod_{T(I)}\to\Ee$ preserves pushouts and cofibrant objects and detects weak equivalences, it suffices by Lemma \ref{lemReedGen} to prove that the back square is a homotopical cell attachment in $\Ee$ after application of $V_T$. This follows from hypothesis $(d)$ since the back square is the image under $\lambda^*$ of a free cell attachment in $\Ee^T$ and hence taken to a homotopical cell attachment in $\Ee$ by the forgetful functor $V_T$ because $V_T\lambda^*=U_T$. Therefore, the unit $\eta_Z:Z\to \lambda^*\lambda_!(Z)$ is a weak equivalence between $V_T$-cofibrant $T(I)$-modules and $Z$ is a cofibrant $T(I)$-module, as required for the inductive step.

Any cellular $T(I)$-module is obtained from the initial $T(I)$-module by a possibly transfinite composition of cobase changes of the aforementioned kind because $\Ee$ is cellularly generated. A well-known telescope lemma implies then that $\eta_Z$ is a weak equivalence for all cellular $T(I)$-modules. Any cofibrant $T(I)$-module is retract of a cellular one, so that $\eta_Z$ is a weak equivalence for all cofibrant $T(I)$-modules $Z$.\end{proof}

\begin{rem}\label{hypotheses}Theorem \ref{thMain} is a crude version of our homotopical Gabriel-Morita theory. The four hypotheses are different in nature:

Hypothesis $(a)$ can be reformulated if the monad $T$ preserves reflexive coequalisers in which case $\Ee^T$ is a cocomplete $\Ee$-category with $\Ee$-tensors and $\Ee$-cotensors. The latter yield path-objects for fibrant $T$-algebras so that the existence of a transferred model structure amounts to the existence of a \emph{fibrant replacement functor} for $T$-algebras provided Quillen's small object argument is available in $\Ee^T$ (cf. \cite{BMAHO}).

Hypothesis $(b)$ ensures the existence of a transferred model structure on $T(I)$-modules. Note that our proof of Theorem \ref{thMain} also uses the property that tensoring with $I\to T(I)$ takes cofibrant objects to cell extensions.

Hypothesis $(c)$ can be weakened and already follows from the special case $Y=I$ in virtue of the cofibrancy of the monoidal unit, the associativity constraint of the strength and Brown's Lemma. It is crucial that the strength $\sg_{X,I}$ coincides up to isomorphism with the unit of the $(\lambda_!,\lambda^*)$-adjunction at free $T(I)$-modules.

Hypothesis $(d)$ is certainly the most difficult to check in practice and has the inconvenience that it strongly involves the behaviour of pushouts of $T$-algebras. Section \ref{excellent} discusses conditions on $\Ee$ and on $T$ implying $(d)$.\end{rem}

\subsection{Excellent and homotopically right exact monads}\label{excellent}

This part of Section \ref{Mth} contains our main theorem. We freely use results and terminology of Section \ref{taubar}.

\begin{defn}\label{monadconditions}A strong monad $(T,\mu,\eta,\sg)$ on a monoidal model category $\Ee$ will be called \emph{excellent} if \begin{enumerate}\item the monad preserves filtered colimits and reflexive coequalisers;\item the unit $\eta_X:X\to T(X)$ is a cofibration at each cofibrant object $X$;\item the category of $\,T$-algebras admits a transferred model structure.\end{enumerate}\end{defn}

\begin{defn}\label{monadconditions2}A monad $(T,\mu,\eta)$ on a pointed model category $\Ee$ will be called \emph{homotopically right exact} if\begin{enumerate}\item the monad preserves the null-object;\item for each cell extension $X\ito Y$ the image $T(X)\to T(Y)$ is a cell extension and the induced map $T(Y)/T(X)\to T(Y/X)$ is a weak equivalence;\item the functor $U_T:\Ee^T\to\Ee$ takes free cell extensions to cell extensions.\end{enumerate}\end{defn}

\begin{prop}\label{appexcisive}Let $\,T$ be an excellent monad on a tractable monoidal model category $\Ee$ with standard system of simplices. If $\,T$ is homotopically right exact and $\Ee$ is excisive, then the forgetful functor $U_T:\Ee^T\to\Ee$ takes

\begin{enumerate}\item free cofibre sequences in $\Ee^T$ to homotopy cofibre sequences in $\Ee$;
\item free cell attachments in $\Ee^T$ to homotopical cell attachments in $\Ee$.\end{enumerate}\end{prop}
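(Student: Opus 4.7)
The plan is to establish (2) first and then derive (1) as a direct consequence. For (2) I handle a ``strict free case'' directly and then use the bar resolution of Section \ref{taubar} to reduce the general case to it. In the strict free case I consider a free cell attachment built from a cell extension $X\ito Y$ in $\Ee$ and a map $h:X\to A$ of cofibrant objects in $\Ee$, so that the upper-right corner is $F_T(A)$ and the map $F_T(X)\to F_T(A)$ is $F_T(h)$. Since left adjoints commute with pushouts, the pushout in $\Ee^T$ is $F_T(Y\cup_X A)$, and $U_T$ yields in $\Ee$ the square with corners $T(X),T(Y),T(A),T(Y\cup_X A)$. All four corners are cofibrant (cofibrancy of pushouts along cell extensions, combined with excellence of $T$), and the vertical maps $T(X)\ito T(Y)$ and $T(A)\ito T(Y\cup_X A)$ are cell extensions by condition (2) of homotopical right exactness. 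Since $\Ee$ is excisive, Proposition \ref{propexc} reduces homotopy-pushout status to the coincidence of parallel vertical cofibres; left properness and homotopical right exactness give $T(Y)/T(X)\weq T(Y/X)$ and $T(Y\cup_X A)/T(A)\weq T((Y\cup_X A)/A)=T(Y/X)$, so the strict free case is a homotopical cell attachment.

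For an arbitrary cofibrant $T$-algebra $V$ the bar resolution $B_\bullet=B(T,T,V)$ has levels $B_n=F_T(T^n U_T(V))$ whose fat realization provides a cofibrant replacement of $V$ in $\Ee^T$, by the results of Section \ref{taubar}. A given map $g:F_T(X)\to V$ with adjoint $\tilde g:X\to U_T(V)$ lifts to a simplicial map whose $n$-th component is $F_T(\alpha_n)$ for an evident $\alpha_n:X\to T^n U_T(V)$; since $U_T(V)$ is cofibrant by condition (3) of homotopical right exactness (applied to the free cell extension from an initial $T$-algebra to $V$), each $\alpha_n$ is a map of cofibrant objects. The levelwise pushout against $F_T(X)\ito F_T(Y)$ therefore falls into the strict free case in every simplicial degree. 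The preservation properties of fat realization developed in Section \ref{taubar} -- compatibility with $U_T$ on suitably cofibrant simplicial objects, together with the simplicial Gluing Lemma -- then assemble these levelwise homotopical cell attachments into a homotopical cell attachment realising the underlying square of $W$ as a homotopy pushout in $\Ee$.

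Finally, (1) follows from (2). Given a free cofibre sequence $V\ito W\to W/V$ with $W/V\cong F_T(Y/X)$, condition (3) of homotopical right exactness makes $U_T(V)\ito U_T(W)$ a cofibration in $\Ee$, sitting in a genuine cofibre sequence $U_T(V)\ito U_T(W)\to U_T(W)/U_T(V)$ in the left proper category $\Ee$. By (2) the square is a homotopy pushout, so $U_T(W)/U_T(V)\weq T(Y)/T(X)\weq T(Y/X)=U_T(W/V)$, and the canonical comparison $U_T(W)/U_T(V)\to U_T(W/V)$ is a weak equivalence, exhibiting the image as a homotopy cofibre sequence. The main obstacle is the bar-resolution step: controlling the fat realization and its interaction with $U_T$ requires the refined cofibrancy notion for simplicial objects -- interpolating between degreewise and Reedy cofibrancy -- that Section \ref{taubar} is precisely designed to supply.
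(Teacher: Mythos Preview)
Your strategy differs from the paper's: you prove (2) first and derive (1), whereas the paper shows (1) $\Leftrightarrow$ (2) by a short argument (Proposition \ref{propexc} plus $2$-out-of-$3$) and then establishes (1). Your ``strict free case'' is correct and is essentially the observation that for a cell extension $X\ito Y$ and a cofibrant $A$ the comparison map $T(Y)\cup_{T(X)}T(A)\to T(Y\cup_X A)$ is a weak equivalence --- this is homotopical right exactness combined with Proposition \ref{propexc}. Your derivation of (1) from (2) is also fine.

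The bar-resolution reduction, however, is substantially under-argued, and the gap is precisely where the difficulty lies. Writing $P_\bullet$ for your levelwise pushout and $R_\bullet$ for the degreewise $\Ee$-pushout of the three $U_T$-images, the ``assembling'' step amounts to showing that the degreewise weak equivalence $R_\bullet\to U_TP_\bullet$ (given by the strict free case) realises to a weak equivalence. Lemma \ref{tauleft} reduces this to $\tau$-cofibrancy of both sides. For $R_\bullet$ this follows from Proposition \ref{tauBrown}, but for $U_TP_\bullet$ one needs Lemma \ref{tausaturation}, which in turn requires exhibiting a simplicial map to a known $\tau$-cofibrant object that becomes a weak equivalence after geometric realisation. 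Such a map can be found (for instance the augmentation $U_TP_\bullet\to U_T(W)$, once one has verified $|P_\bullet|_{\Ee^T}\simeq W$ via the Gluing Lemma in $\Ee^T$), but none of this is visible in your sketch; the phrase ``preservation properties of fat realisation \dots\ simplicial Gluing Lemma'' does not locate the actual obstruction. Also, your ``evident $\alpha_n$'' needs care: a map from the constant object $F_T(X)$ to $\Bb_\bullet(V)$ does exist via iterated degeneracies, but it lands in $T^{n+1}U_T(V)$ rather than $T^nU_T(V)$, and its simpliciality must be checked.

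The paper's route through (1) is cleaner at exactly this point. Working with cofibres rather than pushouts, the relevant object $U_T(\Bb_\bullet(W)/\Bb_\bullet(V))$ admits a canonical simplicial map to the bar resolution $U_T\Bb_\bullet(W/V)$, and Corollary \ref{maincor} (proved independently from Reedy cofibrancy of bar resolutions in $\Ee^T$) shows this map realises to a weak equivalence, so Lemma \ref{tausaturation} applies directly without further manoeuvring.
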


\begin{proof}We first show that (1) and (2) are equivalent. Consider a free cell attachment
$$
 \xymatrix @!0 @C=2cm @R=2cm{\relax
    F_T(X) \ar[r] \ar@{ >->}[d] & V \ar@{ >->}[d] \\
    F_T(Y) \ar[r]  & W \\
  }
$$where $X\ito Y$ is a cell extension in $\Ee$ and $V$ is a cofibrant $T$-algebra. In particular, the vertical quotients are isomorphic so that we get $F_T(Y/X)\cong W/V$ in $\Ee^T$. Since $T$ is homotopically right exact, the underlying maps of the two vertical arrows are cell extensions in $\Ee$. Therefore, since $\Ee$ is excisive, Proposition \ref{propexc} tells us that the underlying square in $\Ee$ is a \emph{homotopical cell attachment}, i.e. (2) holds, if and only if the induced map on quotients $$U_T(F_T(Y))/U_T(F_T(X))\to U_T(W)/U_T(V)$$ is a weak equivalence. Now (1) holds if and only if the canonical map $$U_T(W)/U_T(V)\to U_T(W/V)$$ is a weak equivalence. The composition of both yields the canonical map $$U_T(F_T(Y))/U_T(F_T(X))\to U_T(F_T(Y/X))\cong U_T(W/V)$$ which is a weak equivalence because $T$ is homotopically right exact. The $2$-out-of-$3$ property of weak equivalences shows that (1) and (2) are equivalent.

Let us establish (1). The simplicial bar resolution applied to the free cofibre sequence $V\ito W\to W/V$ defines a sequence $\Bb_.(V)\to\Bb_.(W)\to\Bb_.(W/V)$ of simplicial $T$-algebras. Since $\Ee$ is tractable, the transferred model structure on $\Ee^T$ possesses a generating system of cell extensions for its cofibrations, cf. Section \ref{SMmc}. This together with property (3) of a homotopically right exact monad implies (by a transfinite induction) that cofibrant $T$-algebras have an underlying cofibrant object. Therefore, by Theorem \ref{mainappendix}(a), in the following commutative diagram

$$
\xymatrix @!0 @C=4.5cm @R=2cm{\relax
 |U_T\Bb_.(W)/U_T\Bb_.(V)| \ar[r]^\alpha \ar[d]_{\sim} &  |U_T(\Bb_.(W)/\Bb_.(V))| \ar[r]^\beta  & |U_T\Bb_.(W/V)| \ar[d]^{\sim} \\
 U_T(W)/U_T(V) \ar[rr]_\gamma & & U_T(W/V)
}
$$
the vertical arrows are weak equivalences. In order to show that $\gamma$ is a weak equivalence, i.e. (1) holds, it is enough that $\alpha$ and $\beta$ are weak equivalences. The map $\beta$ underlies the weak equivalence $\Bb(W)/\Bb(V)\weq\Bb(W/V)$ of Corollary \ref{maincor}.

Since $U_T\Bb_.(W/V)$ is $\tau$-cofibrant by Theorem \ref{mainappendix}(c), Lemma \ref{tausaturation} just below implies that $U_T(\Bb_.(W)/\Bb_.(V))$ is $\tau$-cofibrant as well. Moreover, the canonical map $U_T\Bb_.(V)\to U_T\Bb.(W)$ is a degreewise cofibration between $\tau$-cofibrant simplicial objects, and hence a $\tau$-cofibration by Proposition \ref{tauBrown}(a). In particular, its quotient $U_T\Bb_.(W)/U_T\Bb_.(V)$ is $\tau$-cofibrant by Proposition \ref{tauBrown}(d). In virtue of Lemma \ref{tauleft} it is thus sufficient to show that $\alpha$ is the geometric realisation of a degreewise weak equivalence. This is indeed the case, since $T$ is homotopically right exact, and $$\alpha_n:U_T\Bb_n(W)/U_T\Bb_n(V)\to U_T(\Bb_n(W)/\Bb_n(V))$$ is of the form $T(Y)/T(X)\to T(Y/X)$ for  $X=T^nU_T(V)$ and $Y=T^nU_T(W)$.\end{proof}

\begin{lem}\label{tausaturation}In an excisive monoidal model category with standard system of simplices, for any two degreewise cofibrant simplicial objects which become weakly equivalent after geometric realisation, if one is $\tau$-cofibrant then so is the other.\end{lem}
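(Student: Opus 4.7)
Let $X_\bullet,Y_\bullet$ be degreewise cofibrant simplicial objects with a weak equivalence $|X_\bullet|\weq|Y_\bullet|$ and assume $X_\bullet$ is $\tau$-cofibrant. The statement reflects the intention that, over an excisive base, $\tau$-cofibrancy among degreewise cofibrant simplicial objects is an \emph{intrinsically homotopical} property of the geometric realisation, so I would aim to re-express $\tau$-cofibrancy as a homotopical condition on $|X_\bullet|$, then transport it across the given weak equivalence.

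The first step is to make the characterisation of $\tau$-cofibrancy from Section \ref{taubar} explicit along the skeletal filtration. For a degreewise cofibrant simplicial object $Z_\bullet$, the properties collected in Proposition \ref{tauBrown} (closure of $\tau$-cofibrations under composition, pushouts along degreewise cofibrations, and quotients) together with Lemma \ref{tauleft} (realisation of degreewise weak equivalences between $\tau$-cofibrant objects is a weak equivalence) point to the following reformulation: $Z_\bullet$ is $\tau$-cofibrant precisely when the skeletal inclusions of its realisation $|\mathrm{sk}_{n-1}Z_\bullet|\to|\mathrm{sk}_n Z_\bullet|$ are cofibrations whose homotopy cofibres are weakly equivalent to the non-degenerate $n$-simplices of $Z_\bullet$, assembled from the latching data. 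This is the reformulation on which the excisive hypothesis can bite.

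The second step is to transport this condition to $Y_\bullet$. The engine is Proposition \ref{propexc}: in an excisive model category a commutative square is a homotopy pushout if and only if the induced map on parallel homotopy cofibres is a weak equivalence. Applied inductively along the skeletal filtrations, and combined with the saturation property of cofibre sequences (Proposition \ref{cofibre}) and condition \ref{cofibrecor}(b), the known $\tau$-cofibrant structure on $X_\bullet$ and the weak equivalence $|X_\bullet|\weq|Y_\bullet|$ force the matching skeletal inclusions and homotopy cofibres on the $Y_\bullet$ side to be of the right form, which is precisely $\tau$-cofibrancy of $Y_\bullet$.

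The main obstacle is that the hypothesis furnishes only a weak equivalence between the realisations, not a map of simplicial objects: the skeletal filtrations of $X_\bullet$ and $Y_\bullet$ cannot be compared by a direct morphism, only by their homotopical invariants. Overcoming this is exactly the role played by the excisive hypothesis, which via Propositions \ref{cofibre}, \ref{cofibrecor} and \ref{propexc} promotes purely homotopical comparisons (of homotopy cofibres and homotopy pushouts) back to genuine cofibration and cofibrant-quotient statements. Once the skeletal reformulation is secured and excisiveness is deployed in this way, the transfer of $\tau$-cofibrancy from $X_\bullet$ to $Y_\bullet$ becomes essentially formal.
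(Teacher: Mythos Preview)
Your plan has a genuine gap, and it is precisely the obstacle you yourself flag. The skeletal filtration of $|Z_\bullet|$ is a structure attached to the \emph{simplicial} object $Z_\bullet$, not to its realisation as an object of $\Ee$. If $|X_\bullet|\simeq|Y_\bullet|$ but there is no simplicial map between $X_\bullet$ and $Y_\bullet$, the two skeletal towers are entirely unrelated: there is no square to which Proposition~\ref{propexc} can be applied, and no natural transformation of cofibre sequences to which Proposition~\ref{cofibre} or condition~\ref{cofibrecor}(b) can be applied. Excisiveness lets you recognise homotopy pushouts and detect weak equivalences \emph{inside a given diagram}; it does not manufacture a comparison between two independent filtrations of weakly equivalent objects. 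So the ``transport'' step, as you describe it, cannot be carried out. (Your skeletal reformulation of $\tau$-cofibrancy is also not the one suggested by the definitions: the cofibre of $|\mathrm{sk}_{n-1}Z|\to|\mathrm{sk}_nZ|$ is always the $n$-fold suspension of $Z_n/L_nZ$, regardless of $\tau$-cofibrancy; the comparison that matters is between this and the corresponding graded piece of $\|Z\|$, which involves $Z_n$ itself.)

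The paper's argument avoids the obstacle by immediately reducing (via $2$-out-of-$3$) to the statement that for any \emph{map} $\phi:X\to Y$ of degreewise cofibrant simplicial objects with $|\phi|$ a weak equivalence, the fat realisation $\|\phi\|$ is a weak equivalence too. One then factors $\phi$ as a Reedy cofibration followed by a degreewise weak equivalence. The second factor is handled by Lemma~\ref{fat}. For the Reedy cofibration, the quotient $Y/X$ is Reedy cofibrant, hence $\tau$-cofibrant by Lemma~\ref{taucof}, so $\|Y/X\|\simeq|Y/X|$ is acyclic; since $\|X\|\ito\|Y\|$ is a cofibration with quotient $\|Y/X\|$, excisiveness in the minimal form~\ref{cofibrecor}(a) finishes the proof. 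No skeletal analysis is needed, and excisiveness enters only once, through a single cofibre sequence.
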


\begin{proof}It suffices (by the $2$-out-of-$3$ property of weak equivalences) to show that for any map $\phi:X\to Y$ of degreewise cofibrant simplicial objects, for which $|X|\to|Y|$ is a weak equivalence, the fat realisation $\|X\|\to\|Y\|$ is a weak equivalence as well.

This is true for a degreewise weak equivalence $\phi$ because $\tau(\phi)$ is a degreewise weak equivalence between Reedy cofibrant objects in this case. Since any simplicial map factors as a Reedy cofibration followed by a degreewise weak equivalence, it remains to treat the case of a Reedy cofibration $\phi$. The geometric realisation $|\phi|:|X|\to|Y|$ is then an acyclic cofibration with acyclic quotient $|Y/X|$. Since $Y/X$ is Reedy cofibrant, it is $\tau$-cofibrant by Lemma \ref{taucof}, so that $\|Y/X\|$ is acyclic as well. Property \ref{cofibrecor}(a) of an excisive model category implies then that the fat realisation $\|X\|\to\|Y\|$ is an acyclic cofibration as well.\end{proof}

\begin{thm}\label{mainexcisive}Let $T$ be an excellent monad on a tractable monoidal model category $(\Ee,\otimes,I)$ with standard system of simplices.

If $\,T$ is homotopically right exact and $\,\Ee$ is excisive then the linear approximation $\lambda:-\otimes T(I)\to T$ induces a Quillen equivalence precisely when the strength $\sg_{X,Y}$ of the monad $\,T$ is a weak equivalence for all cofibrant objects $X,Y$ of $\Ee$.\end{thm}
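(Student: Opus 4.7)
The plan is to prove both implications separately. Sufficiency of the strength condition will follow almost directly by combining Theorem \ref{thMain} with Proposition \ref{appexcisive}; necessity will be extracted from the behaviour of the unit of the $(\lambda_!,\lambda^*)$-adjunction on free $T(I)$-modules, together with the associativity constraint of the strength.

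For the ``if'' direction, I would verify the four hypotheses of Theorem \ref{thMain}. Hypotheses (a) and (b) are immediate from excellence of $T$, once one notes that the monoidal unit is cofibrant by tractability. Hypothesis (c) is precisely the assumed weak invertibility of $\sg_{X,Y}$ on cofibrant pairs. The delicate point is hypothesis (d), namely that $U_T:\Ee^T\to\Ee$ sends free cell attachments to homotopical cell attachments; but this is exactly Proposition \ref{appexcisive}(2), which in turn rests on excisiveness of $\Ee$, homotopical right exactness of $T$, and the bar resolution machinery of Section \ref{taubar}. Assembling these pieces, Theorem \ref{thMain} delivers the Quillen equivalence.

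For the converse, suppose $(\lambda_!,\lambda^*)$ is a Quillen equivalence. For every cofibrant object $X$ of $\Ee$ the free $T(I)$-module $X\otimes T(I)$ is cofibrant in $\Mod_{T(I)}$, since the free functor is left Quillen for the transferred structure (which exists by excellence and because $\eta_I:I\to T(I)$ is a cofibration). Both model structures being transferred, $\lambda^*$ is essentially the identity on underlying objects and preserves all weak equivalences, so the derived unit at $X\otimes T(I)$ agrees up to weak equivalence with the ordinary unit; the latter must therefore be a weak equivalence. As already identified in the proof of Corollary \ref{propVSMMon}, this ordinary unit coincides up to canonical isomorphism with $\sg_{X,I}:X\otimes T(I)\to T(X)$, whence $\sg_{X,I}$ is a weak equivalence for every cofibrant $X$.

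To upgrade this to weak invertibility of $\sg_{X,Y}$ for arbitrary cofibrant pairs, I would follow the sketch in Remark \ref{hypotheses}. Applying the associativity constraint of Diagram \ref{diagassbeta} to the triple $(X,Y,I)$ expresses $\sg_{X\otimes Y,I}$ (a weak equivalence, since $X\otimes Y$ is cofibrant by the pushout-product axiom) as the composite of $X\otimes\sg_{Y,I}$ and $\sg_{X,Y\otimes I}$, up to the associator and right-unit isomorphisms. Now $T(Y)$ is cofibrant, since cofibrant $T$-algebras have cofibrant underlying objects (tractability together with condition (3) of homotopical right exactness, as used in Proposition \ref{appexcisive}); hence $\sg_{Y,I}$ is a weak equivalence between cofibrant objects, and Ken Brown's Lemma applied to the left Quillen bifunctor $-\otimes-$ shows that $X\otimes\sg_{Y,I}$ is a weak equivalence. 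The $2$-out-of-$3$ property then forces $\sg_{X,Y\otimes I}$, and hence $\sg_{X,Y}$ via naturality along $r_Y$, to be weak equivalences as well. I expect the main obstacle to lie in the forward direction, specifically in invoking Proposition \ref{appexcisive} cleanly, since that proposition bundles essentially all the homotopical subtlety involving the bar resolution and the $\tau$-cofibrancy analysis of Section \ref{taubar}.
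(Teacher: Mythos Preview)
Your proposal is correct and follows essentially the same route as the paper's own proof: the forward direction reduces to Theorem~\ref{thMain} via Proposition~\ref{appexcisive}, and the converse extracts weak invertibility of $\sg_{X,I}$ from the unit of the Quillen equivalence and then upgrades to general $\sg_{X,Y}$ via the associativity constraint and Brown's Lemma, exactly as sketched in Remark~\ref{hypotheses}. Two minor points: the identification of the adjunction unit with the strength is stated more directly in the proof of Theorem~\ref{thMain} than in Corollary~\ref{propVSMMon}, and the cofibrancy of $T(Y)$ follows already from excellence condition~(2) (the unit $\eta_Y$ is a cofibration for cofibrant $Y$), which is simpler than the route through homotopical right exactness you propose.
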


\begin{proof}The excellence of $T$ implies hypotheses (a) and (b) of Theorem \ref{thMain}. Proposition \ref{appexcisive} implies that (d) also holds. Therefore if (c) holds then linear approximation is a Quillen equivalence. Conversely, we have seen in the proof of Theorem \ref{thMain} that the latter being a Quillen equivalence implies that $\sg_{X,I}$ and hence $\sg_{X,Y}$ are weak equivalences for all cofibrant objects $X,Y$, cf. Remark \ref{hypotheses}.\end{proof}

\subsection{Recovering a theorem of Schwede}Segal's \cite{SECCT} category of $\Gamma$-spaces will be denoted $\GS$. By \emph{$\Gamma$-space} we mean a functor $A:\Gamma^\op\to s\Set$ defined on finite based sets and taking values in simplicial sets. We require $\Gamma$-spaces to preserve singletons so that $\GS$ is a pointed category. According to Bousfield-Friedlander \cite{BouHT} the category $\GS$ carries a \emph{stable} model structure with the property that its homotopy category $\Ho(\GS)$ is stable homotopy of connective spectra. Despite of the terminology the category $\GS$ is \emph{not} a stable model category \cite{HovMC} but only a stably excisive model category in the sense of Section \ref{HMT} so that our results apply. In a genuin stable context other techniques are available, cf. Schwede-Shipley \cite{SchSMC}.

Each $\Gamma$-space $A$ induces a strong endofunctor $\uA$ of the category $s\Set_*$ of based simplicial sets (cf. \cite{SECCT,BouHT,BIWPSCILS}). Composition of endofunctors corresponds to a circle-product of $\Gamma$-spaces. Circle-monoids $(A,m:A\circ A\to A)$ correspond one-to-one to pointed simplicial Lawvere theories or, equivalently, to strong monads $(\uA,\underline{m}:\uA\circ\uA\to\uA)$ on $s\Set_*$ which are determined by their restriction to $\Gamma^\op$, cf. \cite{SchSHAT,Lawvere}.

Schwede shows in \cite[Theorem 4.4]{SchSHAT} that for any circle-monoid $(A,m:A\circ A\to A)$ connective stable homotopy of $(s\Set_*)^{\uA}$ is equivalent to the homotopy category of modules over a certain $\Gamma$-ring $\Gamma_A=(A,\mu:A\wedge A\to A)$. The smash-product of $\Gamma$-spaces is a closed symmetric monoidal structure on $\GS$ induced by Day convolution from the smash product of finite based sets, cf. Lydakis \cite{LydSG}. Smash-product and circle-product share the same unit $\Gamma^1=\Gamma(-,1_+)$, where $1_+$ is a two-element set.

The strong monad $\uA$ extends to a strong monad on $(\GS,\wedge,\Gamma^1)$ in such a way that homotopy of $\uA$-algebras in $\GS$ is connective stable homotopy of $\uA$-algebras in $s\Set_*$. On the other hand, Schwede's $\Gamma$-ring $\Gamma_A$ may be identified with the endomorphism-ring of $\uA(\Gamma^1)$. Therefore, Schwede's \cite[Theorem 4.4]{SchSHAT} becomes an instance of our Theorem \ref{mainexcisive} provided the hypotheses of the latter are satisfied.

The following three points must be verified:\begin{enumerate}\item[(a)]$(\GS,\wedge,\Gamma^1)$ is a monoidal model category;\item[(b)]The strength $\sigma_{B,C}$ of $\uA$ is a stable equivalence for cofibrant $\Gamma$-spaces $B,C$;\item[(c)]The monad $\uA:\GS\to\GS$ is excellent and homotopically right exact.\end{enumerate}

For this we use several times the fact that the \emph{assembly map} $B\wedge C\to B\circ C$ is a stable equivalence if $B$ or $C$ is a cofibrant $\Gamma$-space, as shown by Lydakis \cite[5.29]{LydSG}.

For (a) note that the pushout-product axiom holds for cofibrations (cf. \cite{LydSG,BMENRC}) so that it is enough to show that $B\wedge C$ is stably acyclic if $B$ or $C$ is. By Lydakis' theorem we can study the circle-product $B\circ C$ instead, where the property holds because a $\Gamma$-space is stably acyclic precisely when its associated Segal spectrum is.

For (b) note that the monad $\uA$ is given by left composition $A\circ-$ and the strength $\sg_{B,C}$ is given by a map $B\wedge (A\circ C)\to A\circ(B\wedge C)$ which for the unit $C=\Gamma^1$ coincides (up to a switch) with the assembly map. By Remark \ref{hypotheses} this suffices.

For (c) details may be found in \cite{RatMTEC} where the special case of a well-pointed circle-monoid $A$ is treated. The general case reduces to this thanks to \cite[Theorem C]{Rezk}. Let us mention that Condition \ref{monadconditions2}(2) for the monad $\uA$ follows from diagram:

$$
\xymatrix @!0 @C=3.5cm @R=1.75cm{\relax
 A\wedge B \ar[r] \ar[d]_{\sim} &  A\wedge C \ar[r] \ar[d]_{\sim} & A\wedge (C/B) \ar[d]_{\sim}\ar[rd]^\sim& \\
 A\circ B \ar[r] & A\circ C \ar[r] & (A\circ C)/ (A\circ B)\ar[r]& A\circ(C/B)
}
$$

\section{Fat realisation and simplicial bar resolution}\label{taubar}

We investigate homotopical properties of the \emph{simplicial bar resolution} of $T$-algebras for excellent monads $T$, cf. Definition \ref{monadconditions}. These properties have been essential in establishing Theorem \ref{mainexcisive}. 
Our method relies on \cite[Appendix]{BerBoardVogtRes} where it has been shown that in presence of a \emph{standard system of simplices} there is a well behaved \emph{geometric realisation functor} for Reedy cofibrant simplicial objects in any monoidal model category. Thanks to Segal's \emph{fat realisation} \cite{SECCT} the good behaviour of the geometric realisation functor extends to a larger class of simplicial objects, half-way in between degreewise cofibrant and Reedy cofibrant.

We assume throughout that $\Cc$ is a model category and that the category $s\Cc$ of simplicial objects of $\Cc$ is endowed with the \emph{Reedy model structure}, cf. \cite{Hi,HovMC}. We also assume that $\Cc$ is an $\Ee$-model category for a monoidal model category $(\Ee,\otimes,I_\Ee)$ with cofibrant monoidal unit $I_\Ee$ and \emph{standard system of simplices} $\delta$.  This amounts to a cosimplicial object $\delta:\Delta\to\Ee$ such that left Kan extension along the Yoneda embedding $y:\Delta\to s\Set$ yields a symmetric monoidal \emph{left Quillen functor} $$|-|_\delta:(s\Set,\times,\Delta[0])\to(\Ee,\otimes,I_\Ee)$$with invertible unit-constraint $I_\Ee\cong|\Delta[0]|_\delta$ and weakly invertible structural maps $|D_1|_\delta\otimes|D_2|_\delta\weq|D_1\times D_2|_\delta$, cf. \cite[Proposition A.13]{BerBoardVogtRes}.

There is an associated \emph{geometric realisation functor} $|-|:s\Cc\to\Cc$ for the simplicial objects of $\Cc$ defined by the \emph{coend}$$|X|=X\otimes_\Delta\delta$$using the $\Ee$-action on $\Cc$. Since $y:\Delta\to s\Set$ is a Reedy cofibrant cosimplicial object of $s\Set$, the given cosimplicial object $\delta=|-|_\delta\circ y:\Delta\to\Ee$ is Reedy cofibrant and the geometric realisation functor $|-|:s\Cc\to\Cc$ is a left Quillen functor.

\subsection{Fat realisation}To the geometric realisation functor $|-|:s\Cc\to\Cc$ we associate a fat realisation functor $\|\!-\!\|:s\Cc\to\Cc$, following Segal \cite[Appendix A]{SECCT}. We first define an endofunctor $\tau:s\Cc\to s\Cc$ which serves as a Reedy cofibrant replacement functor for degreewise cofibrant simplicial objects. The endofunctor $\tau$ underlies the comonad induced by the following adjunction$$i^*:\Cc^{\Delta^\op}\lrto\Cc^{\Delta^\op_{inj}}:i_!$$ where $i:\Delta_{inj}\inc\Delta$ denotes the inclusion of the wide subcategory of \emph{injective} (i.e. face) operators. We put $\tau(X)=i_!i^*(X)$ with augmentation $\tau(X)\to X$ being the counit of the adjunction, and define the \emph{fat realisation} by $$\|X\|=|\tau(X)|\to|X|.$$Since the subcategory $\Delta_{inj}$ is a Reedy category with $\Delta_{inj}=\Delta_{inj}^+$, a presheaf on $\Delta_{inj}$ is Reedy cofibrant precisely when it is degreewise cofibrant. Each degreewise cofibrant simplicial object $X$ has thus a Reedy cofibrant restriction $i^*(X)$. Since the right adjoint $i^*$ is a right Quillen functor, the left adjoint $i_!$ is a left Quillen functor so that the endofunctor $\tau$ converts degreewise cofibrant into Reedy cofibrant simplicial objects. Explicitly, in simplicial degree $n$, we have the formula
$$\tau(X)_n=\coprod_{\Delta_{sur}([n],[k])}X_k$$ where the coproduct is taken over the finite set of \emph{surjective} (i.e. degeneracy) operators with domain $[n]$.

\begin{lem}\label{fat}Fat realisation takes degreewise (acyclic) cofibrations to (acyclic) cofibrations. In particular, it takes degreewise weak equivalences between degreewise cofibrant simplicial objects to weak equivalences between cofibrant objects.\end{lem}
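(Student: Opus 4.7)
The plan is to decompose fat realisation as $\|-\| = |-| \circ i_! \circ i^*$ and check Quillen-theoretic preservation properties along each factor in turn.

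For the first assertion, I would first observe that $i^* : s\Cc \to \Cc^{\Delta^\op_{inj}}$, with both sides equipped with their Reedy model structure, sends degreewise (acyclic) cofibrations to Reedy (acyclic) cofibrations. The key point is that $\Delta_{inj}$ contains no degeneracy operators, so all latching objects in $\Cc^{\Delta^\op_{inj}}$ are initial; Reedy cofibrations on that side therefore coincide with degreewise cofibrations, and the restriction functor $i^*$ trivially preserves the latter. Next, the excerpt notes that $i^*$ is right Quillen for these Reedy structures, so the left adjoint $i_!$ is left Quillen and preserves (acyclic) cofibrations. Finally, $|-| : s\Cc \to \Cc$ is left Quillen for the Reedy model structure on $s\Cc$, hence preserves Reedy (acyclic) cofibrations. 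Composing these three steps proves that $\|-\|$ preserves degreewise (acyclic) cofibrations.

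For the ``in particular'' clause, let $f : X \to Y$ be a degreewise weak equivalence between degreewise cofibrant simplicial objects. Both $\tau(X)$ and $\tau(Y)$ are Reedy cofibrant, and using the explicit formula
\[
\tau(f)_n = \coprod_{\Delta_{sur}([n],[k])} f_k,
\]
$\tau(f)_n$ is a finite coproduct of weak equivalences between cofibrant objects of $\Cc$. Since finite coproducts of cofibrant objects are homotopy coproducts, each $\tau(f)_n$ is a weak equivalence, so $\tau(f)$ is a Reedy weak equivalence between Reedy cofibrant simplicial objects. The left Quillen functor $|-|$ then sends it, via Ken Brown's lemma, to a weak equivalence between cofibrant objects in $\Cc$; this is precisely the asserted $\|f\| = |\tau(f)|$.

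No step looks genuinely obstructive; the only subtlety is keeping careful track of which Reedy model structure is in play at each stage and noting that on $\Delta^\op_{inj}$ the Reedy cofibrations degenerate to degreewise cofibrations, a point the excerpt already uses implicitly in arguing that $\tau$ is a Reedy cofibrant replacement for degreewise cofibrant objects.
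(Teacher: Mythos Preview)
Your proof is correct and follows essentially the same route as the paper: the paper's one-line argument is that $\tau$ converts degreewise (acyclic) cofibrations into Reedy (acyclic) cofibrations and that geometric realisation is left Quillen, then invokes Brown's Lemma for the second clause. You simply unfold the first step via the decomposition $\tau=i_!i^*$ and apply Brown's Lemma to $|-|$ after checking that $\tau(f)$ is a degreewise weak equivalence between Reedy cofibrant objects, which is the same argument spelled out in more detail.
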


\begin{proof}Geometric realisation takes (acyclic) Reedy cofibrations to (acyclic) cofibrations, and $\tau$ converts degreewise (acyclic) cofibrations into (acyclic) Reedy cofibrations. The second statement is a consequence of Brown's Lemma.\end{proof}

Reedy cofibrancy implies degreewise cofibrancy but the converse is wrong in general. On the other hand, fat realisation does not preserve finite limits as does geometric realisation, and is therefore less compatible with ``algebraic structures''. This motivates the following compromise between the two cofibrancy notions. 







\begin{defn}A simplicial map $f:X\to Y$ is called \emph{$\tau$-cofibration} if it is degreewise a cofibration and the induced map $|X|\cup_{\|X\|}\|Y\|\to|Y|$ is a weak equivalence.\end{defn}

A simplicial object $X$ is thus \emph{$\tau$-cofibrant} precisely when $X$ is degreewise cofibrant and the map $\|X\|\to|X|$ is a weak equivalence. For any $\tau$-cofibrant simplicial object, fat realisation is thus a \emph{cofibrant replacement} of geometric realisation.

\begin{lem}\label{tauleft}Geometric realisation takes degreewise weak equivalences between $\tau$-cofibrant simplicial objects to weak equivalences.\end{lem}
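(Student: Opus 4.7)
The plan is to exploit the commutative square relating fat realisation to geometric realisation via the natural augmentation $\|-\|\to|-|$, and to reduce the claim to Lemma~\ref{fat} together with the definition of $\tau$-cofibrancy.

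Given a degreewise weak equivalence $f:X\to Y$ between $\tau$-cofibrant simplicial objects, I would consider the naturality square
$$
\xymatrix @C=2cm @R=1.5cm{
\|X\| \ar[r]^{\|f\|} \ar[d] & \|Y\| \ar[d] \\
|X| \ar[r]_{|f|} & |Y|
}
$$
induced by the counit $\tau\Rightarrow\mathrm{id}$ of the adjunction $(i_!,i^*)$. Since both $X$ and $Y$ are degreewise cofibrant, Lemma~\ref{fat} tells us that $\|f\|$ is a weak equivalence between cofibrant objects. By definition of $\tau$-cofibrancy, the two vertical augmentations $\|X\|\to|X|$ and $\|Y\|\to|Y|$ are weak equivalences. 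The $2$-out-of-$3$ property of weak equivalences then forces $|f|$ to be a weak equivalence as well.

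I do not expect any real obstacle here: every input is already in place. The only thing worth noting is that Lemma~\ref{fat} only guarantees cofibrancy of $\|X\|$ and $\|Y\|$ (not of $|X|,|Y|$), but this is harmless because we are only invoking $2$-out-of-$3$ for weak equivalences, not a statement about cofibrancy of $|f|$.
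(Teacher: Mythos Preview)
Your argument is correct and is exactly the approach the paper takes: its one-line proof reads ``This follows from Lemma~\ref{fat} and $2$-out-of-$3$ of weak equivalences,'' which is precisely your commutative-square argument spelled out.
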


\begin{proof}This follows from Lemma \ref{fat} and $2$-out-of-$3$ of weak equivalences.\end{proof}

\begin{lem}\label{biQuillen}Let $-\otimes-:\Aa\times\Bb\to\Cc$ be a left Quillen bifunctor (cf. \cite{HovMC}). For any cofibration $f:X\ito Y$ in $\Aa$ and any weak equivalence between cofibrant objects $g:V\weq Z$ in $\Bb$, the induced comparison map$$f\square g:X\otimes Z\cup_{X\otimes V}Y\otimes V\to Y\otimes Z$$is a weak equivalence in $\Cc$.\end{lem}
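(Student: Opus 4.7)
The plan is to bootstrap the pushout-product axiom---which immediately handles the case when $g$ itself is an acyclic cofibration between cofibrant objects---to arbitrary weak equivalences between cofibrant objects, by combining Ken Brown's factorisation lemma with a pasting law for pushout-products.

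First I would invoke Brown's lemma in $\Bb$ to factor the weak equivalence $g:V\weq Z$ between cofibrant objects as $g = p\circ j$, where $j:V\ito W$ is an acyclic cofibration (so that $W$ is cofibrant) and $p:W\weq Z$ admits a section $s:Z\ito W$ which is itself an acyclic cofibration, with $p\circ s=\mathrm{id}_Z$.

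Next I would establish a pasting formula for pushout-products: for any composable pair $V\overset{\alpha}{\to}W\overset{\beta}{\to}Z$ in $\Bb$, the pushout-product $f\square(\beta\alpha)$ factors canonically as $(f\square\beta)\circ \phi_\alpha$, where $\phi_\alpha$ is obtained as the pushout of $f\square\alpha$ along the natural map from the source of $f\square\alpha$ into the source of $f\square(\beta\alpha)$. This is a routine but careful diagram chase on the $2\times 3$ grid obtained by tensoring the cofibration $f$ with the composable pair; its essential content is the associativity of pushouts. Since pushouts preserve acyclic cofibrations in any model category, $\phi_\alpha$ is an acyclic cofibration whenever $\alpha$ is.

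Applying the pasting formula to $g = p\circ j$ yields $f\square g = (f\square p)\circ \phi_j$ with $\phi_j$ an acyclic cofibration (because $f\square j$ is, by the pushout-product axiom). Applying it instead to $\mathrm{id}_Z = p\circ s$ yields $\mathrm{id}_{Y\otimes Z} = f\square\mathrm{id}_Z = (f\square p)\circ \phi_s$ with $\phi_s$ again an acyclic cofibration (pushout of the acyclic cofibration $f\square s$). The two-out-of-three property now forces $f\square p$ to be a weak equivalence, whence $f\square g$ is a composite of two weak equivalences, as required. The one place where genuine care is needed is the pasting formula itself; the remainder is a black-box combination of Brown's lemma, the pushout-product axiom, and two-out-of-three.
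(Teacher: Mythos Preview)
Your proof is correct and follows essentially the same route as the paper's: both use Brown's factorisation $g=p\circ j$ (the paper writes $r\circ h$) together with the pasting formula for pushout-products to reduce to showing $f\square p$ is a weak equivalence, and both deduce the latter from the identity $p\circ s=\mathrm{id}_Z$ via a cobase-change of the acyclic cofibration $f\square s$. The only cosmetic difference is that the paper phrases the last step as ``$f\square r$ is a retraction of a cobase-change of $f\square i$'' while you invoke two-out-of-three; these are equivalent.
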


\begin{proof}Brown's Lemma yields a factorisation of $g:V\weq Z$ into an acyclic cofibration $h:V\overset{\sim}{\ito}W$ followed by a retraction $r:W\weq Z$ of an acyclic cofibration $i:Z\overset{\sim}{\ito}W$. This yields the following commutative diagram  in $\Cc$:

$$
\xymatrix @!0 @C=4cm @R=2cm{\relax
 X\otimes V \ar[r]^{X\otimes h} \ar[d]_{f\otimes V} &  X\otimes W \ar[r]^{X\otimes r} \ar[d]^{f\otimes W} & X\otimes Z \ar[d]^{f\otimes Z} \\
 Y\otimes V \ar[r]_{Y\otimes h} & Y\otimes W \ar[r]_{Y\otimes r} & Y\otimes Z
}
$$ The two squares have comparison maps $f\square h$ and $f\square r$. Let us denote by $\tilde{f}$ the pushout of the left vertical $f\otimes V$ along
$X\otimes h$ such that we get $(f\square h)\circ \tilde{f}=f\otimes W$, and by $\tilde{r}$ the pushout of $X\otimes r$ along $\tilde{f}$.

A diagram chase implies then that the comparison map $f\square g$ of the outer rectangle factors as the pushout of the comparison map $f\square h$ along $\tilde{r}$ followed by the comparison map $f\square r$. Since $f\square h$ is an acyclic cofibration it suffices thus to show that $f\square r$ is a weak equivalence. This is the case because a similar argument (applied to the maps $i,r$) shows that $f\square r$ is a retraction of a cobase-change of the acyclic cofibration $f\square i$, so that $f\square r$ a indeed a weak equivalence.\end{proof}

\begin{lem}\label{taucof}Any Reedy cofibration is a $\tau$-cofibration.\end{lem}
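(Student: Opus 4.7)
The plan is to verify the two defining conditions of a $\tau$-cofibration for a given Reedy cofibration $f \colon X \to Y$. The first condition, namely that $f$ is degreewise a cofibration, follows from standard Reedy theory: the Reedy hypothesis says each relative latching map $L_n Y \cup_{L_n X} X_n \ito Y_n$ is a cofibration in $\Cc$, and an induction on $n$ combining this with the cobase-change of $L_n X \to L_n Y$ yields that each $f_n \colon X_n \to Y_n$ is a cofibration.

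The substantive condition is that the pushout comparison $|X| \cup_{\|X\|} \|Y\| \to |Y|$ is a weak equivalence. Since $|-|$ is a left adjoint it preserves pushouts, so this map coincides with the realisation of the simplicial map $\tau(Y) \cup_{\tau(X)} X \to Y$ induced by the augmentation $\tau \Rightarrow \mathrm{id}$. My strategy is to exhibit the commutative square
$$
\xymatrix @C=2cm @R=1.5cm{
\|X\| \ar[r] \ar[d] & \|Y\| \ar[d] \\
|X| \ar[r] & |Y|
}
$$
as a homotopy pushout, and then conclude with a $2$-out-of-$3$ argument applied to one of its edges.

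The crux — and the main obstacle — is the special case $X=\emptyset$ of the lemma itself: for any Reedy cofibrant $Z$ the augmentation $\|Z\| \to |Z|$ is a weak equivalence, i.e.\ $Z$ is $\tau$-cofibrant. I would establish this by induction along the skeletal filtration $\mathrm{sk}_{n-1} Z \ito \mathrm{sk}_n Z$, which in the Reedy structure is obtained as a pushout along the pushout-product of the simplicial boundary inclusion $\partial \Delta[n] \ito \Delta[n]$ with the $n$-th latching cofibration $L_n Z \ito Z_n$. Since $\tau$ is a left adjoint, both $\tau$ and $|-|$ commute with these pushouts, so that $|\mathrm{sk}_n Z|$ and $\|\mathrm{sk}_n Z\|$ are both obtained from their $(n{-}1)$-skeleta by analogous cell attachments in $\Cc$, differing only in that the standard simplex $|\Delta[n]|_\delta \cong \delta^n$ is replaced by its fat counterpart $\|\Delta[n]\|_\delta$. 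The augmentation $\|\Delta[n]\|_\delta \to |\Delta[n]|_\delta$ is a weak equivalence between cofibrant objects in $\Ee$ (the fat simplex being contractible onto the standard one), so Lemma \ref{biQuillen} applied to the pushout-product of this weak equivalence with the latching cofibration, together with the inductive hypothesis, propagates the weak equivalence from stage $n-1$ to stage $n$. Passage to the transfinite colimit completes the induction.

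Granting this key step, the proof concludes cleanly. First, $\tau(f) \colon \tau(X) \to \tau(Y)$ is itself a Reedy cofibration: $i^*(f)$ is degreewise a cofibration (hence a Reedy cofibration for the degreewise Reedy structure on $\Cc^{\Delta_{inj}^\op}$), and $i_!$ is the left adjoint of a Quillen pair with target the Reedy model structure on $s\Cc$; applying the left Quillen functor $|-|$ shows that $\|f\| \colon \|X\| \to \|Y\|$ is a cofibration in $\Cc$. Since in the situation at hand $X$, and hence $Y$, are Reedy cofibrant, both vertical augmentations $\|X\| \to |X|$ and $\|Y\| \to |Y|$ are weak equivalences by the key step. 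By left properness the cobase-change $\|Y\| \to |X| \cup_{\|X\|} \|Y\|$ of the weak equivalence $\|X\| \to |X|$ along the cofibration $\|X\| \to \|Y\|$ is a weak equivalence; composed with the pushout comparison, it recovers the weak equivalence $\|Y\| \to |Y|$, so the $2$-out-of-$3$ property yields that $|X| \cup_{\|X\|} \|Y\| \to |Y|$ is a weak equivalence, as required.
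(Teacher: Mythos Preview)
Your argument establishes the lemma only for Reedy cofibrations with Reedy cofibrant domain: you explicitly write ``Since in the situation at hand $X$, and hence $Y$, are Reedy cofibrant\ldots'', but the statement concerns an arbitrary Reedy cofibration $f:X\to Y$. This generality matters downstream---for instance the proof of Proposition~\ref{tauBrown}(b) factors a map between merely $\tau$-cofibrant objects through a Reedy cofibration and then invokes Lemma~\ref{taucof} on that factor, whose domain need not be Reedy cofibrant. Your appeal to left properness is also unavailable here, since that hypothesis is only imposed later in Proposition~\ref{tauBrown}; under your extra cofibrancy assumption you could cite the Gluing Lemma~\ref{lemPatch} instead, but that does not repair the gap in generality.

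The paper's own proof sidesteps both issues by recasting fat and ordinary realisation as coends $X\otimes_\Delta |y_\tau|_\delta$ and $X\otimes_\Delta |y|_\delta$ against two Reedy cofibrant cosimplicial objects of $\Ee$ related by a weak equivalence $|y_\tau|_\delta\weq|y|_\delta$. The comparison map $|X|\cup_{\|X\|}\|Y\|\to|Y|$ is then literally the map $f\square g$ of Lemma~\ref{biQuillen} for the left Quillen bifunctor $-\otimes_\Delta-$, with $f$ the given Reedy cofibration and $g$ the cosimplicial weak equivalence; that lemma requires neither cofibrancy of the domain of $f$ nor left properness. Your skeletal induction is essentially an unpacked, absolute-case instance of this same bifunctor argument; rerunning it in relative form---or simply invoking Lemma~\ref{biQuillen} once at the coend level as the paper does---would close the gap.
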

\begin{proof}There is a cosimplicial object $y_\tau:\Delta\to s\Set$ such that $\tau(X)$ becomes a coend $X\otimes_\Delta y_\tau$ and the augmentation $\tau(X)\to X$ a map of coends $X\otimes_\Delta y_\tau\to X\otimes_\Delta y$ where $y:\Delta\to s\Set$ is the Yoneda-embedding. Therefore, the map $|\tau(X)|=\|X\|\to|X|$ may be identified with a map of coends $X\otimes_\Delta|y_\tau|_\delta\to X\otimes_\Delta|y|_\delta$. The right hand side $|y_\tau|_\delta\to|y|_\delta$ is actually a weak equivalence of Reedy cofibrant cosimplicial objects of $\Ee$. Now, taking coends over $\Delta$ defines a \emph{left Quillen bifunctor}$$-\otimes_\Delta-:\Cc^{\Delta^\op}\times \Ee^{\Delta}\to\Cc$$where we take the Reedy model structures on simplicial, resp. cosimplicial objects. Lemma \ref{biQuillen} implies then that for a Reedy cofibration $f:X\to Y$ the induced map$$X\otimes_\Delta|y|_\delta\cup_{X\otimes_\Delta|y_\tau|_\delta}Y\otimes_\Delta|y_\tau|_\delta\to Y\otimes_\Delta|y|_\delta$$ is a weak equivalence, i.e. that $X\to Y$ is a $\tau$-cofibration.\end{proof}

\begin{prop}\label{tauBrown}If $\,\Cc$ is left proper, the following properties hold:
 \begin{itemize}\item[(a)]Any degreewise cofibration between $\tau$-cofibrant objects is a $\tau$-cofibration;\item[(b)]Any simplicial map between $\tau$-cofibrant objects factors as a Reedy cofibration followed by a degreewise weak equivalence between $\tau$-cofibrant objects;\item[(c)] The class of $\tau$-cofibrations is closed under composition;\item[(d)]The class of (acyclic) $\tau$-cofibrations between $\tau$-cofibrant objects is closed under cobase-change along simplicial maps between $\tau$-cofibrant objects.\end{itemize}\end{prop}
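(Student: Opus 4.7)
All four parts follow from left properness together with Lemmas~\ref{fat} and~\ref{taucof}, exploiting the fact that fat realisation $\|{-}\|$ preserves colimits (since $\tau=i_!i^*$ is a composite of a colimit-preserving restriction and a left adjoint) and sends any degreewise cofibrant simplicial object to a cofibrant object of $\Cc$ (since $\tau$ produces a Reedy cofibrant replacement and $|{-}|$ is left Quillen). I would handle (a), (b), (c) first and then reduce (d) via (b).

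For (a), a degreewise cofibration $f:X\to Y$ between $\tau$-cofibrant objects yields a cofibration $\|X\|\ito\|Y\|$ by Lemma~\ref{fat}. Pushing the weak equivalence $\|X\|\weq|X|$ (from $\tau$-cofibrancy of $X$) along this cofibration gives, by left properness, a weak equivalence $\|Y\|\weq|X|\cup_{\|X\|}\|Y\|$; composing with $\|Y\|\weq|Y|$ and applying 2-out-of-3 yields $|X|\cup_{\|X\|}\|Y\|\weq|Y|$, so $f$ is a $\tau$-cofibration.

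For (b) I would use the Reedy factorisation $X\ito W\weq Y$ (Reedy cofibration followed by Reedy acyclic fibration, hence a degreewise weak equivalence). The intermediate object $W$ is degreewise cofibrant (as the target of a Reedy cofibration from the degreewise cofibrant $X$). By Lemma~\ref{taucof} the Reedy cofibration $X\ito W$ is a $\tau$-cofibration, so $|X|\cup_{\|X\|}\|W\|\weq|W|$, and combining with the left-properness argument of (a) gives $\|W\|\weq|W|$; hence $W$ is $\tau$-cofibrant. For (c), given $\tau$-cofibrations $f:X\to Y$ and $g:Y\to Z$, the identity $|X|\cup_{\|X\|}\|Z\|=(|X|\cup_{\|X\|}\|Y\|)\cup_{\|Y\|}\|Z\|$ factors the comparison map to $|Z|$ through $|Y|\cup_{\|Y\|}\|Z\|$; the first leg is the pushout of the weak equivalence $|X|\cup_{\|X\|}\|Y\|\weq|Y|$ (from the $\tau$-cofibration condition for $f$) along the cofibration $\|Y\|\ito\|Z\|$ (Lemma~\ref{fat} for $g$), hence a weak equivalence by left properness, while the second leg is the $\tau$-cofibration condition for $g$.

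For (d) I would use (b) to factor $h:X\to X'$ as a Reedy cofibration $X\ito X''$ followed by a degreewise weak equivalence $X''\weq X'$, both between $\tau$-cofibrant objects, and form the pushout in two stages. In the Reedy cofibration stage the pushout $Y\to Y''$ is again a Reedy cofibration; Lemma~\ref{taucof} together with the left-properness trick of (a) shows that $Y''$ is $\tau$-cofibrant, and (a) then delivers $f''$ as a $\tau$-cofibration. The degreewise weak equivalence stage is the main obstacle: one must show that $Y'=Y''\cup_{X''}X'$ remains $\tau$-cofibrant, i.e.\ $\|Y'\|\weq|Y'|$. Since $\tau$ preserves colimits, $\|Y'\|=\|X'\|\cup_{\|X''\|}\|Y''\|$ is a genuine homotopy pushout in $\Cc$ (with $\|X''\|\ito\|Y''\|$ a cofibration between cofibrant objects). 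Comparing with $|Y'|=|X'|\cup_{|X''|}|Y''|$ via the weak equivalences $\|X''\|\weq|X''|$, $\|X'\|\weq|X'|$, $\|Y''\|\weq|Y''|$ supplied by $\tau$-cofibrancy, and cofibrantly replacing the geometric-realisation span through the factorisation $|X''|\ito|X''|\cup_{\|X''\|}\|Y''\|\weq|Y''|$ furnished by the $\tau$-cofibration condition for $f''$, an application of Gluing Lemma~\ref{lemPatch} produces the required weak equivalence $\|Y'\|\weq|Y'|$. Finally, (a) yields $f'$ as a $\tau$-cofibration between $\tau$-cofibrant objects, and the acyclic variant follows by the same argument using acyclic cofibrations throughout.
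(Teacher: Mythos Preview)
Your argument for (a), (b) and (c) is correct and matches the paper's proof essentially verbatim: the same left-properness square for (a), the same Reedy factorisation plus Lemma~\ref{taucof} for (b), and the same pushout-pasting identity with left properness for (c). For (d) you also adopt the paper's strategy of factoring the cobase-change map via (b) into a Reedy cofibration followed by a degreewise weak equivalence and handling the two stages separately. In the Reedy stage you take a slightly more roundabout route than the paper (you show $Y''$ is $\tau$-cofibrant via Lemma~\ref{taucof} applied to $Y\ito Y''$ and then invoke (a), whereas the paper directly observes that the cofibration/weak-equivalence factorisation $|X|\ito P_f\weq|Y|$ is preserved under cobase-change along the cofibration $|X|\ito|X''|$ by left properness), but both are valid.

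There is, however, a genuine gap in your treatment of the degreewise weak equivalence stage. After replacing the leg $|X''|\to|Y''|$ by the cofibration $|X''|\ito P_{f''}:=|X''|\cup_{\|X''\|}\|Y''\|$, the Gluing Lemma only produces a weak equivalence
\[
\|Y'\|\;\weq\;Q\;:=\;|X'|\cup_{|X''|}P_{f''},
\]
the pushout of the \emph{replaced} span. You still owe the step $Q\weq|Y'|=|X'|\cup_{|X''|}|Y''|$, i.e.\ that cobase-changing the weak equivalence $P_{f''}\weq|Y''|$ along $|X''|\to|X'|$ remains a weak equivalence. Since $|X''|\to|X'|$ is merely a weak equivalence (by Lemma~\ref{tauleft}) and not a cofibration, left properness does not supply this, and the Gluing Lemma as stated in the paper (Lemma~\ref{lemPatch}) also requires all objects cofibrant, which $|X''|,|X'|$ need not be. The paper's own proof is admittedly terse at exactly this point, citing only ``Lemmas~\ref{fat} and~\ref{tauleft}'', so your outline is no less complete than the paper's; but your sentence ``an application of Gluing Lemma~\ref{lemPatch} produces the required weak equivalence $\|Y'\|\weq|Y'|$'' overstates what the lemma delivers, and the missing step deserves an explicit argument.
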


\begin{proof}(a) Any degreewise cofibration $f:X\to Y$ between degreewise cofibrant simplicial objects induces the following commutative square
$$
\xymatrix @!0 @C=1.8cm @R=1.8cm{\relax
   \|X\| \ar[r] \ar[d] & \|Y\|\ar[d] \\
   |X| \ar[r]  & |Y| \\
 }
$$in which the upper horizontal map is a cofibration between cofibrant objects by Lemma \ref{fat}. Since the vertical maps are weak equivalences by hypothesis, the comparison map $|X|\cup_{\|X\|}\|Y\|\to|Y|$ is a weak equivalence by left properness.

(b) Any map factors a Reedy cofibration followed by a degreewise weak equivalence. By Lemma \ref{taucof} the Reedy cofibration is a $\tau$-cofibration. Any $\tau$-cofibration with $\tau$-cofibrant domain has a $\tau$-cofibrant codomain, cf. (a).

(c) This follows by a diagram chase from Lemma \ref{fat} since weak equivalences are closed under cobase-change along cofibrations by left properness.

(d) It is sufficient by (b) to show closedness under cobase-change along Reedy cofibrations and along degreewise weak equivalences between $\tau$-cofibrant simplicial objects. The geometric realisation of an (acyclic) $\tau$-cofibration $X\to Y$ factors as an (acyclic) cofibration $|X|\to|X|\cup_{\|X\|}\|Y\|$ followed by a weak equivalence $|X|\cup_{\|X\|}\|Y\|\to|Y|$, cf. (a). Therefore, a cubical diagram chase implies that we just need to check that this (acyclic) cofibration/weak equivalence factorisation is preserved under the two aforementioned cobase-changes.

For a cobase-change along a Reedy cofibration this follows from left properness. For a cobase-change along a degreewise weak equivalence between $\tau$-cofibrant objects this follows from Lemmas \ref{fat} and \ref{tauleft}.\end{proof}


\begin{defn}A simplicial object $X$ is said to be \emph{split-augmented} if $X$ comes equipped with extra-degeneracies $s_n:X_{n-1}\to X_n,\,n\geq 0,$ prolonging the usual simplicial identities, where $X_{-1}=\pi_0(X)$ and $\partial_0:X_0\to X_{-1}$ is the quotient map.

A simplicial object is said to be \emph{good} (cf. Segal \cite{SECCT}) if it is degreewise cofibrant and all degeneracies are cofibrations.\end{defn}

Under the hypotheses made at the beginning of Section \ref{taubar}, simplicial sets $D$ ``act'' on simplicial objects $X$ of $\Cc$ via the $\Ee$-action on $\Cc$. Explicitly, in simplicial degree $n$, we have $(X\otimes D)_n=\coprod_{D_n}X_n$ with obvious simplicial operators. In particular, the \emph{simplicial cylinder} on $X$ is defined by $\Cyl(X)=X\otimes\Delta[1]$.

\begin{lem}\label{good}The simplicial cylinder of a good simplicial object realises to a model-theoretical cylinder.\end{lem}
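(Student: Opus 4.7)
The plan is to use the cylinder factorisation $\partial\Delta[1]=\Delta[0]\sqcup\Delta[0]\hookrightarrow\Delta[1]\weq\Delta[0]$ in the Quillen model structure on $s\Set$ and transport it to a cylinder for $|X|$ via the composite functor
\[
|\cdot|\circ(X\otimes-):s\Set\lto s\Cc\lto\Cc.
\]
Since $X\otimes\Delta[0]=X$ and $X\otimes\partial\Delta[1]=X\sqcup X$ in $s\Cc$, this yields the diagram $|X|\sqcup|X|\to|X\otimes\Delta[1]|\to|X|$ composing to the fold map of $|X|$, and I need to show that the first arrow is a cofibration and the second a weak equivalence.

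To this end, I would show that the composite functor is left Quillen. The right-hand factor $|\cdot|:s\Cc\to\Cc$ is left Quillen for the Reedy model structure on $s\Cc$ by the standing assumption of a standard system of simplices. For the left-hand factor, the pushout-product axiom for the bifunctor $-\otimes-:s\Cc\times s\Set\to s\Cc$ (with Reedy and Quillen model structures respectively) implies that $X\otimes-$ is left Quillen as soon as $X$ is Reedy cofibrant: for a cofibration $D\hookrightarrow D'$ in $s\Set$, the induced map $X\otimes D\to X\otimes D'$ is a Reedy cofibration, acyclic if $D\weq D'$ is. The goodness hypothesis enters through the fact that a good simplicial object is Reedy cofibrant: the latching map $L_n X\to X_n$ is a cofibration because it is assembled from the degeneracies $s_i:X_{n-1}\to X_n$ (cofibrations by goodness, and splitting via the face maps) by an inductive pushout construction compatible with the simplicial identities, cf.\ the appendix of \cite{BerBoardVogtRes}.

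Applying this left Quillen composite to the factorisation $\partial\Delta[1]\hookrightarrow\Delta[1]\weq\Delta[0]$ in $s\Set$ then produces the desired factorisation: the cofibration $\partial\Delta[1]\hookrightarrow\Delta[1]$ of cofibrant simplicial sets is sent to a cofibration $|X|\sqcup|X|\hookrightarrow|X\otimes\Delta[1]|$, and the weak equivalence $\Delta[1]\weq\Delta[0]$ between cofibrant objects is sent to a weak equivalence $|X\otimes\Delta[1]|\weq|X|$ by Ken Brown's Lemma. I expect the main technical obstacle to be justifying that goodness implies Reedy cofibrancy in the generality of an arbitrary model category $\Cc$ equipped with a standard system of simplices: while the degeneracies split via the face maps and the latching object admits an inductive analysis via the simplicial identities, translating these combinatorial facts into cofibrations of latching maps requires some care.
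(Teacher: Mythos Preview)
Your approach has a genuine gap: the claim that a good simplicial object is Reedy cofibrant is false in an arbitrary model category. Having cofibrant objects in each degree and cofibrations as degeneracies does \emph{not} force the latching maps $L_nX\to X_n$ to be cofibrations. The latching map is induced out of an iterated pushout of degeneracies, and in a general model category a map out of such a pushout need not be a cofibration merely because its components are; Lillig-type ``union of cofibrations'' theorems, which would give this, are special to settings like Hurewicz cofibrations of spaces. The appendix of \cite{BerBoardVogtRes} does not contain such a statement. In fact the whole apparatus of $\tau$-cofibrancy in this section exists precisely because goodness does not yield Reedy cofibrancy: Proposition~\ref{propcof} only concludes $\tau$-cofibrancy from goodness (plus split-augmentation), and Theorem~\ref{mainappendix}(b) obtains Reedy cofibrancy of the bar resolution by an independent retract argument rather than from goodness of its underlying object. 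You correctly flag this as the main obstacle, and it is indeed fatal to the argument as written.

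The paper's proof avoids this by never asserting Reedy cofibrancy of $X$. Instead it climbs the skeletal filtration $sk_{n-1}(X)\to sk_n(X)$ and shows that each inclusion is ``cylindric'', meaning the comparison map $sk_n(X)\cup_{sk_{n-1}(X)}\Cyl(sk_{n-1}(X))\to\Cyl(sk_n(X))$ is a Reedy cofibration realising to an acyclic cofibration. The key observation is that this only requires the latching \emph{object} $L_n(X)$ (and $X_n$) to be cofibrant, which is a strictly weaker consequence of goodness and does hold; combined with the anodyne inclusion $\partial\Delta[n]\times\Delta[1]\cup\Delta[n]\times\{0\}\hookrightarrow\Delta[n]\times\Delta[1]$ in $s\Set$, this suffices. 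Your idea of transporting the cylinder factorisation from $s\Set$ is the right intuition, but it must be implemented cell-by-cell along the skeleta rather than via a single left-Quillen-functor statement for $X\otimes-$.
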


\begin{proof}We shall say that a simplicial map $X\to Y$ is \emph{cylindric} if the canonical map $Y\cup_X\Cyl(X)\to\Cyl(Y)$ is a Reedy cofibration whose geometric realisation is an acyclic cofibration. In particular, if the simplicial map $X\to Y$ is cylindric and $|\Cyl(X)|$ is a cylinder on $|X|$ then $|\Cyl(Y)|$ is a cylinder on $|Y|$.

Let $X$ be a good simplicial object. Since $sk_0(X)$ is a constant Reedy cofibrant simplicial object, it is enough to show that the simplicial maps $sk_{n-1}(X)\to sk_n(X)$ are cylindric for all $n>0$. Since the class of cylindric maps is closed under cobase-change, it suffices to show that $L_n(X)\otimes\Delta[n]\cup_{L_n(X)\otimes\partial\Delta[n]}X_n\otimes\partial\Delta[n]\to X_n\otimes\Delta[n]$ is cylindric. By a $2$-out-of-$3$ argument the latter follows from $L_n(X)\otimes\partial\Delta[n]\to L_n(X)\otimes\Delta[n]$ and $X_n\otimes\partial\Delta[n]\to X_n\otimes\Delta[n]$ being cylindric. This in turn is true since $\partial\Delta[n]\times\Delta[1]\cup\Delta[n]\times\Delta[0]\to\Delta[n]\times\Delta[1]$ is an acyclic cofibration of simplicial sets, and $X_n$ as well as $L_n(X)$ are cofibrant objects because $X$ is good.\end{proof}

\begin{prop}\label{propcof}Any split-augmented good simplicial object $X$ is $\tau$-cofibrant and induces weak equivalences $\|X\|\weq|X|\weq\pi_0(X).$\end{prop}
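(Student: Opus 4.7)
The plan is to use Lemma \ref{good} (via the simplicial cylinder) and the standard simplicial contraction furnished by extra degeneracies, applying the argument both to $X$ and to $\tau(X)$ to pin down the comparison map $\|X\|\to|X|$ via a $2$-out-of-$3$ argument through $\pi_0(X)$.

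First, I would construct a simplicial homotopy $H:X\otimes\Delta[1]\to X$ from $\mathrm{id}_X$ to the composite $X\to\pi_0(X)\to X$, where the second map is induced by iterating the extra degeneracy $s_{-1}$. This is the classical contraction: on the $i$-th component of $(X\otimes\Delta[1])_n$ (indexed by $[n]\to[1]$ sending the first $i$ vertices to $0$), the map is $s_{-1}^{\,i}\circ\partial_0^{\,i}:X_n\to X_n$, which uses only iterated extra degeneracies and face operators and so assembles into a simplicial map. The endpoints are $\mathrm{id}_X$ and the iterated composite $X\to\pi_0(X)\to X$.

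Next, since $X$ is good, Lemma \ref{good} tells us that $|X\otimes\Delta[1]|\cong|X|\otimes|\Delta[1]|_\delta$ is a model-categorical cylinder on $|X|$. Therefore $|H|$ exhibits $|X|$ and $\pi_0(X)$ as left-homotopy equivalent. As $\pi_0(X)$ is a retract of $X_0$ via $s_{-1}$, it is cofibrant, and hence the augmentation $|X|\weq\pi_0(X)$ is a weak equivalence.

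For $\tau$-cofibrancy, I apply the same argument to $\tau(X)$. The key observations are: (i) $\tau(X)$ is Reedy cofibrant (since $X$ is degreewise cofibrant), hence in particular good; (ii) $\pi_0(\tau X)\cong\pi_0(X)$, as is easily computed from $\tau(X)_0=X_0$ and $\tau(X)_1=X_1\sqcup X_0$; (iii) the extra degeneracy $s_{-1}^X$ extends to an extra degeneracy $s_{-1}^{\tau}$ of $\tau(X)$, defined on the summand indexed by $\alpha:[n]\twoheadrightarrow[k]$ as $s_{-1}^X:X_k\to X_{k+1}$ followed by inclusion into the summand indexed by $\tilde\alpha:[n+1]\twoheadrightarrow[k+1]$ with $\tilde\alpha(0)=0,\tilde\alpha(i+1)=\alpha(i)+1$. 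The required identities $\partial_0 s_{-1}^{\tau}=\mathrm{id}$ and $\partial_{j+1}s_{-1}^{\tau}=s_{-1}^{\tau}\partial_j$ follow from the analogous identities for $s_{-1}^X$ together with the epi-mono factorisation rule governing the face-operator action on $\tau(X)$. Consequently $\tau(X)$ is a split-augmented good simplicial object, so by the argument just given applied to $\tau(X)$ we obtain $\|X\|=|\tau(X)|\weq\pi_0(\tau X)=\pi_0(X)$.

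Finally, the canonical comparison map $\|X\|\to|X|$ is compatible with the augmentations to $\pi_0(X)$, fitting into a commutative triangle with both slanted arrows weak equivalences. By the $2$-out-of-$3$ property, $\|X\|\to|X|$ is a weak equivalence, i.e.\ $X$ is $\tau$-cofibrant. The main technical point is the verification of (iii) above; everything else is a direct application of Lemma \ref{good} and the classical contracting-homotopy formula.
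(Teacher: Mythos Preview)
Your proof is correct and follows essentially the same strategy as the paper: both arguments extend the extra degeneracies of $X$ to extra degeneracies of $\tau(X)$, observe that $\pi_0(\tau X)\cong\pi_0(X)$, invoke the classical simplicial contracting homotopy (the paper cites May, you write the formula), and then use Lemma~\ref{good} to turn the simplicial homotopies into model-theoretical ones, concluding via $2$-out-of-$3$. The only difference is one of convention: the paper works with the ``top'' extra degeneracy $s_n:X_{n-1}\to X_n$ (so the induced surjection $\bar\phi:[n+1]\twoheadrightarrow[k+1]$ fixes $[n]$ and sends $n+1\mapsto k+1$), while you use the ``bottom'' extra degeneracy $s_{-1}$ (so your $\tilde\alpha$ shifts everything up by one); both are valid splittings and the arguments are dual to each other.
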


\begin{proof}We have a morphism of split-augmented simplicial objects
$$
 \xymatrix @!0 @C=1.8cm @R=1.8cm{\relax
    \tau(X) \ar[r] \ar[d] & X\ar[d] \\
    \pi_0(\tau(X)) \ar[r]  & \pi_0(X) \\
  }
$$
where the map on $_pi_0$ is invertible because $\tau(X)_1=X_0\sqcup X_1$ and $\tau(X)_0=X_0$ with face operators  $(\partial_0^{\tau_0(X)},\partial_1^{\tau_1(X)})=(id_{X_0}\sqcup\partial^X_0,id_{X_0}\sqcup\partial^X_1):\tau(X)_1\dto\tau(X)_0$ so that the coequaliser $\pi_0(\tau(X))$ of this pair equals the coequalier $\pi_0(X)$ of the pair $$(\partial^X_0,\partial^X_1):X_1\dto X_0.$$
We claim that the family $(s^X_{n+1}:X_n\to X_{n+1})_{n\geq 0}$ of extra-degeneracies for $X$ induces a family $(s^{\tau(X)}_{n+1}:\tau(X)_n\to \tau(X)_{n+1})_{n\geq 0}$ of extra-degeneracies for $\tau(X)$. Indeed, for $n=-1$, the extra-degeneracy $s^X_0$ induces an extra-degeneracy $$s^{\tau(X)}_0:\pi_0(\tau(X))\cong\pi_0(X)\xrightarrow{s_0^X} X_0=\tau(X)_0.$$For $n\geq 0$, on the summand of $\tau(X)_n$ indexed by the surjection $\phi:[n]\twoheadrightarrow[k]$, the extra-degeneracy $s^{\tau(X)}_{n+1}$ is defined by $s^X_{k+1}:X_k\to X_{k+1}$ landing in the summand of $\tau(X)_{n+1}$ indexed by the surjection $\bar{\phi}:[n+1]\twoheadrightarrow[k+1]$ where $\bar{\phi}(n+1)=k+1$ and $\bar{\phi}(i)=\phi(i)$ for $0\leq i\leq n$. The simplicial identities required for these extra-degeneracies $(s^{\tau(X)}_{n+1})_{n\geq 0}$ follow from the simplicial identities satisfied by $(s^X_{n+1})_{n\geq 0}$.

This implies that $X$, resp. $\tau(X)$ contains $\pi_0(X)$, resp. $\pi_0(\tau(X))$ as a constant simplicial deformation retract (cf. May \cite{MayGeoIterLoopSp}). Because $X$ is good and $\tau(X)$ is Reedy cofibrant and thus good, geometric realisation induces deformation retractions by Lemma \ref{good}, and hence weak equivalences $|X|\weq\pi_0(X)$ and $|\tau(X)|\weq \pi_0(\tau(X))$, as well as the asserted weak equivalence $|\tau(X)|\weq|X|$.\end{proof}

\subsection{The simplicial bar resolution}The following two statements will be the main application of this section. We shall use the so-called \emph{bar resolution}. The \emph{simplicial} bar resolution $\Bb.(A)$ of a $T$-algebra $A$ is a simplicial object in $\Ee^T$ which in degree $n$ is defined by the formula $\Bb_n(A)=(F_TU_T)^{n+1}(A)$ where $F_T:\Ee\lrto\Ee^T:U_T$ is the free-forgetful adjunction. The simplicial face operators are defined by$$\partial_i=(F_TU_T)^{n-i} \varepsilon_{(F_TU_T)^iA}:\Bb_n(A)\to \Bb_{n-1}(A) \text{ (for } 0\leq i\leq n \text{)}$$ and the simplicial degeneracy operators are defined by$$s_i=(F_TU_T)^{n-i}F_T\eta_{U_T(F_TU_T)^iA}:\Bb_n(A)\to\Bb_{n+1}(A) \text{ (for } 0\leq i\leq n\text{)}.$$
Here $\varepsilon:F_TU_T\Rightarrow id_{\Ee^T}$ is the counit of the free-forgetful adjunction. We shall denote the geometric realisation of $\Bb_.(A)$ by $\Bb(A)$ and call it simply the bar resolution of $A$. It is a $T$-algebra equipped with a canonical $T$-algebra augmentation $\Bb(A)\to A$.\vspace{1ex}

The forgetful functor takes the coequaliser presentation of $A$ to a split coequaliser
$$\xymatrix @!0 @C=3cm {\relax TTU_T(A) \ar@<2pt>[r]^{\mu_{U_TA}} \ar@<-2pt>[r]_{TU_T(\eps_{A}) } & TU_T(A) \ar@/_2pc/@{.>}[l]_{\eta_{TU_TA}} \ar[r]^{U_T(\eps_{A})} & U_T(A)\ar@/_2pc/@{.>}[l]_{\eta_{U_TA}}}$$
which actually provides the whole underlying simplicial object $U_T \Bb.(A)$ with extra-degeneracies  making it \emph{split-augmented} over $U_T(A)$. This implies that $U_T\Bb.(A)$ is a simplicial object containing the constant simplicial object $U_T(A)$ as a simplicial deformation retract (cf. May \cite{MayGeoIterLoopSp}). Therefore, Proposition \ref{propcof} shows that if the simplicial object $U_T(\Bb_.(A))$ is \emph{good} then geometric realisation yields a weak equivalence $|U_T(\Bb_.(A))|_\Ee\weq U_T(A).$ This observation plays a key role in the proof of the following theorem. Results similar to (a) and (b) below under slightly different hypotheses have been obtained by Johnson-Noel, cf. \cite[Proposition 3.23]{JohnsonNoel}. 

\begin{thm}\label{mainappendix}Let $T$ be an excellent monad on a cofibrantly generated monoidal model category $\Ee$ with cofibrant unit and standard system of simplices.\vspace{1ex}

\noindent For each $T$-algebra $A$ with cofibrant underlying object $U_T(A)$ the following holds:
\begin{enumerate}\item[(a)]the bar resolution $\Bb(A)\to A$ is a cofibrant replacement of $A$ in $\Ee^T$;\item[(b)]the simplicial bar resolution $\Bb_.(A)$ is Reedy cofibrant in $s\Ee^T$;\item[(c)]the underlying simplicial object $U_T(\Bb_.(A))$ is $\tau$-cofibrant in $s\Ee$.\end{enumerate}

\end{thm}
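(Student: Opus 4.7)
The plan is to establish the three parts in the order (c), (b), (a), reflecting their logical dependence.

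For (c), observe that $U_T\Bb_n(A) = T^{n+1}U_TA$, and as noted just before the theorem the reflexive split coequaliser presentation of the $T$-algebra $A$ (combined with the unit $\eta$ at each stage) equips $U_T\Bb_.(A)$ with extra degeneracies making it split-augmented over $U_TA$. It remains to verify \emph{goodness}: since $U_TA$ is cofibrant, excellence condition (2) (i.e., $\eta_X\colon X \to T(X)$ is a cofibration at cofibrant $X$) applied inductively shows that each $T^k U_TA$ is cofibrant, and that each degeneracy $T^{n-i}\eta_{T^iU_TA}$ of $U_T\Bb_.(A)$ is a cofibration. Proposition \ref{propcof} then yields both $\tau$-cofibrancy of $U_T\Bb_.(A)$ and the weak equivalence $|U_T\Bb_.(A)| \weq U_TA$.

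For (b), the crucial point is that although the face operators of $\Bb_.(A)$ involve the counit $\varepsilon$ (and hence the structure map of $A$), every degeneracy is free: unwinding the formula yields $s_i = F_T\bigl(T^{n-i}\eta_{T^iU_TA}\bigr)$. Since the Reedy latching object depends only on the degeneracy diagram and $F_T$ preserves colimits, one obtains $L_n\Bb_.(A) \cong F_T(L_n Y_\cdot)$, where $Y_n = T^n U_TA$ carries the corresponding ``$\eta$-degeneracy'' diagram. Because $F_T$ is a left Quillen functor for the transferred structure, it takes cofibrations in $\Ee$ to cofibrations in $\Ee^T$; it therefore suffices to show that each $L_n Y_\cdot \to Y_n$ is a cofibration in $\Ee$. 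By induction on $n$ this latching map can be exhibited as an iterated pushout-product of the cofibrations $\eta_{T^iU_TA}$, all of which are cofibrations at cofibrant objects by excellence.

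For (a), part (b) guarantees that $\Bb(A) = |\Bb_.(A)|_{\Ee^T}$ is cofibrant, since geometric realisation from the Reedy model structure on $s\Ee^T$ (via the standard system of simplices inherited by $\Ee^T$ from $\Ee$) is left Quillen. The weak equivalence $\Bb(A) \to A$ is detected by $U_T$ in the transferred structure, so it suffices to show $U_T\Bb(A) \weq U_TA$ in $\Ee$. Using that each $\Bb_n(A) = F_T(Y_n)$ is free and that $F_T$ preserves $\Ee$-tensors, together with the fact that $U_T$ preserves sifted colimits (excellence condition (1) supplies precisely the preservation of filtered colimits and reflexive coequalisers needed), one identifies $U_T|\Bb_.(A)|_{\Ee^T} \cong |U_T\Bb_.(A)|_\Ee$; the weak equivalence from (c) then finishes the argument.

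The principal obstacle is the latching analysis in (b): while each individual degeneracy is manifestly a free cofibration, assembling them into an iterated pushout-product description of $L_nY_\cdot \to Y_n$ requires careful bookkeeping along the $\eta_{T^iU_TA}$. A secondary subtlety lies in the identification $U_T|\Bb_.(A)|_{\Ee^T} \cong |U_T\Bb_.(A)|_\Ee$ in (a), which depends on the interplay between $F_T$'s preservation of $\Ee$-tensors and $U_T$'s preservation of sifted colimits, both encoded in the excellence hypothesis.
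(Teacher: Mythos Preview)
Your treatment of parts (a) and (c) matches the paper's almost exactly: goodness and split-augmentation of $U_T\Bb_.(A)$ feed into Proposition~\ref{propcof} for (c), and the identification $U_T|{-}|_{\Ee^T}\cong|{-}|_\Ee\circ U_T$ via siftedness of geometric realisation, combined with (b), gives (a). No issue there.

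The divergence is in (b), and here there is a genuine gap. Your reduction $L_n\Bb_.(A)\cong F_T(L_nY_\cdot)$ is correct, so the problem does reduce to showing $L_nY_\cdot\to Y_n$ is a cofibration in $\Ee$. But the claimed ``iterated pushout-product'' description does not go through with the excellence hypotheses alone. Already at $n=2$ the latching map $TX\sqcup_X TX\to T^2X$ (with $X=U_TA$) is governed by the naturality square of $\eta$, whose two legs into $T^2X$ are $\eta_{TX}$ and $T\eta_X$. Excellence tells you $\eta_{TX}$ is a cofibration, but says nothing about $T\eta_X$: the monad is \emph{not} assumed to preserve cofibrations, and a pushout-product type argument would need exactly that. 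For higher $n$ the situation only gets worse, since the latching map involves iterates $T^k\eta_{T^iX}$. Your ``careful bookkeeping'' cannot repair this; it is a missing hypothesis, not a missing calculation.

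The paper circumvents the latching analysis entirely by a retract argument: it invokes the universal property of the bar resolution (due to Trimble) to produce a simplicial section $\Bb_.(A)\to\tau(\Bb_.(A))$ of the canonical augmentation. Since $\Bb_.(A)$ is degreewise cofibrant in $\Ee^T$, its $\tau$-resolution is Reedy cofibrant, and $\Bb_.(A)$ inherits Reedy cofibrancy as a retract. This argument never needs to identify the latching maps explicitly, and in particular never needs $T$ to preserve cofibrations.
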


\begin{proof}Since $\,T$ is strong and preserves reflexive coequalisers, Corollary \ref{proptens} shows that the category of $T$-algebras is a tensored $\Ee$-category, the free-forgetful adjunction is strong, and the strength of the left adjoint $F_T:\Ee\to\Ee^T$ is invertible. Therefore, since the pushout-product axiom holds in $\Ee$, it holds for the $\Ee$-action on $\Ee^T$ when we restrict to the images under $F_T$ of the generating (acyclic) cofibrations of $\Ee$. By a transfinite induction 
the pushout-product axiom holds then for all (acyclic) cofibrations of $\Ee^T$, whence $\Ee^T$ is an $\Ee$-model category.

The forgetful functor $U_T$ preserves the same colimits as the monad $T$ and hence all ``sifted'' colimits. Geometric realisation $|-|_{\Ee^T}:s\Ee^T\to\Ee^T$ is a sifted colimit (because the diagonal $\Delta\to\Delta\times\Delta$ is an initial functor, cf. Gabriel-Ulmer \cite{GabrielUlmer}) so that the canonical transformation $U_T\circ|-|_{\Ee^T}\to|-|_\Ee\circ U_T$ is invertible, i.e. for each simplicial $T$-algebra $X$ we get a canonical isomorphism $U_T(|X|_{\Ee^T})\cong|U_T(X)|_{\Ee}$.

Since the unit $\eta_E:E\to T(E)$ is a cofibration for each cofibrant object $E$ of $\Ee$, the monad $T$ preserves cofibrant objects. This and the cofibrancy of $U_T(A)$ imply that the underlying simplicial object $U_T(\Bb_.(A))$ is good and moreover split-augmented. Proposition \ref{propcof} implies then that $U_T(\Bb_.(A))$  is \emph{$\tau$-cofibrant} yielding (c). Moreover we have a weak equivalence $|U_T(B_.(A))|_\Ee\weq U_T(A)$ inducing a weak equivalence $|\Bb_.(A)|\weq A$ because $U_T:\Ee^T\to\Ee$ reflects weak equivalences. Since geometric realisation takes Reedy cofibrant to cofibrant objects, (a) follows now from (b).

In order to establish (b) we use the following universal property of the simplicial bar resolution, cf. Trimble \cite{Tri}: for any simplicial $T$-algebra $Y_.$ such that the underlying object $U_T(Y_.)$ is split-augmented, and any $T$-algebra map $A\to\pi_0(Y)$
$$
\xymatrix @!0 @C=1.8cm @R=1.8cm{\relax
   \Bb_.(A) \ar@{.>}[r]^{\exists !} \ar[d] & Y_.\ar[d] \\
   A \ar[r]  & \pi_0(Y_.) \\
 }
$$there exists a \emph{unique} simplicial map of $T$-algebras $\Bb_.(A)\to Y_.$ compatible with the extra-degeneracies. If we put $Y_.=\tau(\Bb_.(A))$, the identity on $A$ produces in this way a section $\Bb_.(A)\to\tau(\Bb_.(A))$ of the augmentation $\tau(\Bb_.(A))\to\Bb_.(A)$. Since $\Bb_.(A)$ is degreewise cofibrant, its $\tau$-resolution $\tau(\Bb_.(A))$ is Reedy cofibrant, so that $\Bb_.(A)$, as a retract of a Reedy cofibrant object, is itself Reedy cofibrant.\end{proof}

\begin{cor}\label{maincor}If in addition $\,T$ preserves null objects and cell extensions then the bar resolution functor $\Bb:\Ee^T\to\Ee^T$ takes free cell attachments to homotopical cell attachments, and free cofibre sequences to homotopy cofibre sequences.\end{cor}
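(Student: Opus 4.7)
I would prove the two claims in tandem, starting from a free cell attachment on $F_T(X)\ito F_T(Y)$ whose pushout along $F_T(X)\to V$ gives $W$, so that the associated free cofibre sequence is $V\ito W\to W/V\cong F_T(Y/X)$. Applying $\Bb$, Theorem \ref{mainappendix}(a) makes all resulting objects cofibrant. Since $T$ preserves cell extensions, the simplicial map $B_.(F_T X)\to B_.(F_T Y)$ is levelwise the cofibration $F_T T^{n+1}(X\ito Y)$; together with the latching argument that proved Theorem \ref{mainappendix}(b), it is a Reedy cofibration between Reedy cofibrant simplicial $T$-algebras, so its realization $\Bb F_T(X)\to\Bb F_T(Y)$ is a cofibration in $\Ee^T$. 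The same reasoning, combined with the standard fact that excellence (the cofibrancy of $\eta$) makes $U_T$ send free cell extensions in $\Ee^T$ to cell extensions in $\Ee$, shows $\Bb V\to\Bb W$ is likewise a cofibration.

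A first application of the Gluing Lemma \ref{lemPatch} uses the original free cell attachment as front face, the $\Bb$-resolved square as back face, and the three augmentations $\Bb F_T(X)\weq F_T(X)$, $\Bb F_T(Y)\weq F_T(Y)$ and $\Bb V\weq V$ as connecting weak equivalences. The lemma yields a weak equivalence $\Bb V\cup_{\Bb F_T(X)}\Bb F_T(Y)\weq W$; composition with $\Bb W\weq W$ and $2$-out-of-$3$ then shows the comparison $\Bb V\cup_{\Bb F_T(X)}\Bb F_T(Y)\to\Bb W$ is a weak equivalence, proving the homotopical cell attachment claim.

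For the cofibre sequence claim, a second application of the Gluing Lemma compares the pushout squares computing $(\Bb V\cup_{\Bb F_T(X)}\Bb F_T(Y))/\Bb V\cong\Bb F_T(Y)/\Bb F_T(X)$ and $\Bb W/\Bb V$: the three connecting maps are the identities on $\Bb V$ and on the null-object together with the weak equivalence from the previous paragraph on the remaining vertex. This yields $\Bb F_T(Y)/\Bb F_T(X)\weq\Bb W/\Bb V$. To identify $\Bb F_T(Y)/\Bb F_T(X)$ with $\Bb(W/V)=\Bb F_T(Y/X)$, observe that at simplicial level $n$ the natural comparison is $F_T$ applied to $T^{n+1}(Y)/T^{n+1}(X)\to T^{n+1}(Y/X)$, a weak equivalence obtained by iterating the quotient-preservation clause of the hypothesis (as in Definition \ref{monadconditions2}(2)). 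Both simplicial $T$-algebras are Reedy cofibrant---one by Theorem \ref{mainappendix}(b), the other as a quotient of a Reedy cofibration between Reedy cofibrant objects---hence $\tau$-cofibrant by Lemma \ref{taucof}, so Lemma \ref{tauleft} lifts the degreewise weak equivalence to a weak equivalence on realizations. Composing gives $\Bb W/\Bb V\weq\Bb(W/V)$, which together with the cofibration $\Bb V\to\Bb W$ already established means that $\Bb V\to\Bb W\to\Bb(W/V)$ is a homotopy cofibre sequence.

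The main obstacle I expect is the Reedy cofibrancy verification for the simplicial maps $B_.(F_T X)\to B_.(F_T Y)$ and $B_.(V)\to B_.(W)$: one must check that the relative latching maps are cofibrations, using the explicit description of the bar latching objects (built from $F_T$ applied to composites of $\eta$) together with $T$ preserving cell extensions. If that becomes cumbersome, one can bypass Reedy cofibrations entirely by working with $\tau$-cofibrations throughout: Proposition \ref{tauBrown}(a) promotes levelwise cofibrations between $\tau$-cofibrant simplicial objects to $\tau$-cofibrations, and the quotient step in the third paragraph then proceeds exactly as in the final part of the proof of Proposition \ref{appexcisive}.
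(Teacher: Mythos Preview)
Your treatment of the first claim is essentially the paper's: the retract trick used to prove Theorem \ref{mainappendix}(b) shows that $\Bb_.(F_TX)\to\Bb_.(F_TY)$ is a retract of its $\tau$-resolution, hence a Reedy cofibration, so $\Bb(F_TX)\to\Bb(F_TY)$ is a cofibration in $\Ee^T$, and the Gluing Lemma together with Theorem \ref{mainappendix}(a) finishes.

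The second claim, however, has a genuine gap. You invoke ``the quotient-preservation clause of the hypothesis (as in Definition \ref{monadconditions2}(2))'' to conclude that $T^{n+1}(Y)/T^{n+1}(X)\to T^{n+1}(Y/X)$ is a weak equivalence. But Corollary \ref{maincor} does \emph{not} assume $T$ is homotopically right exact: its only hypotheses beyond Theorem \ref{mainappendix} are that $T$ preserves null objects and cell extensions. Condition \ref{monadconditions2}(2) --- that $T(Y)/T(X)\to T(Y/X)$ is a weak equivalence --- is simply not available here. The same misreading underlies your claim that ``excellence makes $U_T$ send free cell extensions to cell extensions'': that is condition \ref{monadconditions2}(3), again part of homotopical right exactness, not a consequence of excellence.

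The paper avoids both issues by applying the first claim \emph{twice}: once to the given free cell attachment, and once to the free cell attachment obtained by replacing $V$ with the null $T$-algebra (cofibrant because $T$ preserves the null object, so the initial $T$-algebra is null). These two homotopical cell attachments sit as left square and outer rectangle of
\[
\xymatrix @C=1.4cm @R=1.2cm{
\Bb(F_TX)\ar[r]\ar@{ >->}[d] & \Bb(V)\ar[r]\ar[d] & \Bb(*)\ar[d]\\
\Bb(F_TY)\ar[r] & \Bb(W)\ar[r] & \Bb(W/V)
}
\]
and the pasting law for homotopy pushouts makes the right square one as well. Since $\Bb(*)=*$, this gives $\Bb(W)/\Bb(V)\weq\Bb(W/V)$ directly --- no degreewise quotient analysis, and no hypothesis on $U_T$, is required.
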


\begin{proof}Since $T$ preserves cell extensions, for each cell extension $X\to Y$ in $\Ee$ the image $F_T(X)\to F_T(Y)$ has an underlying cell extension whence the simplicial bar resolution $\Bb_.(F_T(X))\to\Bb_.(F_T(Y))$ is degreewise a cell extension of $T$-algebras. Therefore, the $\tau$-resolution $\tau\Bb_.(F_T(X))\to\tau\Bb_.(F_T(Y))$ is a Reedy cofibration between Reedy cofibrant simplicial $T$-algebras, containing the given bar resolution $\Bb_.(F_T(X))\to\Bb_.(F_T(Y))$ as a retract, cf. the preceding proof. In particular, geometric realisation yields a cell extension of $T$-algebras $\Bb(F_T(X))\to\Bb(F_T(Y))$. The first claim follows now from Theorem \ref{mainappendix}(a) and Gluing Lemma \ref{lemPatch}.

For the second claim consider the following commutative diagram of $T$-algebras
$$
\xymatrix @!0 @C=2.3cm @R=1.8cm{\relax
 \Bb(F_T(X)) \ar[r] \ar@{ >->}[d] & \Bb(V) \ar[r] \ar@{ >->}[d] & \Bb(*) \ar@{ >->}[d] \\
 \Bb(F_T(Y)) \ar[r] &  \Bb(W) \ar[r] & \Bb(W/V)
}
$$induced by a free cell attachment. We have just seen that the left square and the outer rectangle are homotopical cell attachments. By a diagram chase the right square is a homotopical cell attachment as well, i.e. $\Bb(W)/\Bb(V)\weq\Bb(W/V)$.
\end{proof}

\end{document}